\newtheorem{lemma}{Lemma}[section]
\newtheorem{corollary}[lemma]{Corollary}
\newtheorem{theorem}[lemma]{Theorem}
\newtheorem{example}[lemma]{Example}
\newtheorem{assumption}[lemma]{Standing Assumption}
\theoremstyle{definition} 
\newtheorem{definition}[lemma]{Definition}
\newtheorem{remarks}[lemma]{Remarks}
\newcommand\reals{{\mathbb R}}
\newcommand{\dg}{\sp{\text{\rm o}}}
\begin{document}

\title{Two projections in a synaptic algebra}

\author{David J. Foulis,{\footnote{Emeritus Professor, Department of
Mathematics and Statistics, University of Massachusetts, Amherst,
MA; Postal Address: 1 Sutton Court, Amherst, MA 01002, USA;
foulis@math.umass.edu.}}\hspace{.05 in} Anna Jen\v cov\'a and Sylvia
Pulmannov\'{a} {\footnote{ Mathematical Institute,
Slovak Academy of Sciences, \v Stef\'anikova 49, SK-814 73 Bratislava,
Slovakia; pulmann@mat.savba.sk, jenca@mat.savba.sk. \newline The second
and third authors were supported by Research and Development Support Agency
under the contract No. APVV-0178-11 and grant VEGA 2/0059/12.}}}

\date{}

\maketitle

\begin{abstract}
\noindent We investigate P. Halmos' \emph{two projections theorem},
(or \emph{two subspaces theorem}) in the context of a synaptic
algebra (a generalization of the self-adjoint part of a von Neumann
algebra).
\end{abstract}

{\small \noindent \emph{Mathematics subject Classification.} 17C65, 81P10, 47B15

\noindent \emph{Keywords:} Synaptic algebra, projection, Peirce decomposition, generic position, CS-decomposition}

\section{Introduction}
In what follows, $A$ is a synaptic algebra with enveloping algebra $R
\supseteq A$, \cite{FSynap, FPSynap, TDSA, SymSA, ComSA, PuNote} and $P$
is the orthomodular lattice \cite{Beran, Kalm} of projections in $A$. For
instance, if ${\mathcal B({\mathcal H})}$ is the algebra of all bounded
linear operators on the Hilbert space ${\mathcal H}$ and ${\mathcal A}$
is the set of all self-adjoint operators in ${\mathcal B}({\mathcal H)})$,
then ${\mathcal A}$ is a synaptic algebra with enveloping algebra
${\mathcal B}({\mathcal H})$. See the literature cited above for numerous
additional examples of synaptic algebras.

In this article, we show that Halmos' work \cite{Halmos} on two projections
on (or two subspaces of) a Hilbert space can be generalized to the context
of the synaptic algebra $A$.

A leisurely, lucid, and extended exposition of Halmos' theory of two
projections can be found in the paper \cite{Guide} of A. B\"{o}ttcher
and I.M. Spitkovsky, where the basic theorem \cite[Theorem 1.1]{Guide}
is expressed in terms of linear subspaces of a Hilbert space, projections
onto these linear subspaces, and operator matrices. Working only with our
synaptic algebra $A$, we have to forgo both Hilbert space and the
operator matrix calculus---still we shall formulate generalizations of
\cite[Theorem 2]{Halmos}, often called \emph{Halmos's CS-decomposition
theorem}. (See Theorems \ref{th:genCSdecomp}, \ref{th:HalmosCStheorem},
and Section \ref{sc:OpMat} below). Also, in Section \ref{sc:Apps},
we give a brief indication of how our version of the CS-decomposition
can be used to develop analogues for synaptic algebras of some of the
consequences of Halmos' theorem for operator algebras.

\section{The orthomodular lattice of projections} \label{sc:OMLP}

In this section we outline some of the notions and facts pertaining to
the synaptic algebra $A$ and to the orthomodular lattice $P\subseteq A$
that we shall need in this article. In what follows, we shall use these
notions and facts routinely, often without attribution. More details and
proofs can be found in \cite{FSynap, FPSynap, TDSA, SymSA, ComSA, PuNote}.
We use the symbol $:=$ to mean `equals by definition,' as usual we
abbreviate `if and only if' by `iff,' and the ordered field of real numbers
is denoted by $\reals$.

If $a,b\in A$, then the product $ab$, calculated in the enveloping
algebra $R$, may or may not belong to $A$. However, if $ab=ba$, i.e.,
if $a$ commutes with $b$ (in symbols $aCb$), then $ab\in A$. Also, if
$ab=0$, then $aCb$ and $ba=0$. We define $C(a):=\{b\in A:aCb\}$. If
$aCc$ whenever $c\in A$ and $cCb$, we say that $a$ \emph{double
commutes} with $b$, in symbols $a\in CC(b)$.

There is a unit element $1\in A$ such that $a=a1=1a$ for all $a\in A$.
To avoid trivialities, we assume that $A$ is \emph{nondegenerate}, i.e.,
that $1\not=0$.

Let $a,b,c\in A$. Than, although $ab$ need not belong to $A$, it
turns out that $ab+ba\in A$. Likewise, although $abc$ need not
belong to $A$, it can be shown that $abc+cba\in A$.

The synaptic algebra $A$ is a partially ordered real linear space under
the partial order relation $\leq$ and we have $0<1$ (i.e., $0\leq 1$
and $0\not=1$); moreover, $1$ is a (strong) order unit in $A$. Elements
of the ``unit interval" $E:=\{e\in A:0\leq e\leq 1\}$ are called
\emph{effects}, and $E$ is a so-called \emph{convex effect algebra}
\cite{GPBB}.

If $0\leq a\in A$, then there is a uniquely determined element $r\in A$
such that $0\leq r$ and $r\sp{2}=a$; moreover, $r\in CC(a)$ \cite[Theorem 2.2]
{FSynap}. Naturally, we refer to $r$ as the \emph{square root} of $a$, in
symbols, $a\sp{1/2}:=r$. If $b\in A$, then $0\leq b\sp{2}$, and the \emph
{absolute value} of $b$ is defined and denoted by $|b|:=(b\sp{2})\sp{1/2}$.
Clearly, $|b|\in CC(b)$ and $|-b|=|b|$. Also, if $aCb$, then $|a|C|b|$ and
$|ab|=|a||b|$.

Elements of the set $P:=\{p\in A:p=p\sp{2}\}$ are called \emph{projections}
and it is understood that $P$ is partially ordered by the restriction of
$\leq$. The set $P$ is a subset of the convex set $E$ of effects in $A$; in
fact, $P$ is the extreme boundary of $E$ (\cite[Theorem 2.6]{FSynap}).
Evidently, $0,1\in P$ and $0\leq p\leq 1$ for all $p\in P$. It turns out
that $P$ is a lattice, i.e., for all $p,q\in P$, the \emph{meet} (greatest
lower bound) $p\wedge q$ and the \emph{join} (least upper bound) $p\vee q$
of $p$ and $q$ exist in $P$; moreover, $p\leq q$ iff $pq=qp=p$. Two
projections $p$ and $q$ are called \emph{complements} iff $p\wedge q=0$
and $p\vee q=1$.

Let $p,q\in P$. The \emph{orthocomplement} of $p$, defined by $p\sp{\perp}
:=1-p$, is again an element of $P$, and we have the following: $0\sp
{\perp}=1$, $1\sp{\perp}=0$, $p\sp{\perp\perp}=p$, $p\leq q\Rightarrow q
\sp{\perp}\leq p\sp{\perp}$, $p\wedge p\sp{\perp}=pp\sp{\perp}=0$, and $p
\vee p\sp{\perp}=p+p\sp{\perp}=1$. Furthermore, $p\leq q$ iff $q-p
\in P$, in which case $q-p=q\wedge p\sp{\perp}=qp\sp{\perp}=p\sp{\perp}q$.
Also, we have the \emph{De\,Morgan laws}: $(p\wedge q)\sp{\perp}=p
\sp{\perp}\vee q\sp{\perp}$ and $(p\vee q)\sp{\perp}=p\sp{\perp}\wedge q
\sp{\perp}$.

The projections $p$ and $q$ are said to be \emph{orthogonal}, in symbols
$p\perp q$, iff $p\leq q\sp{\perp}$. The \emph{orthosum} $p\oplus q$ is
defined iff $p\perp q$, in which case $p\oplus q:=p+q$. It turns out that
$p\perp q\Leftrightarrow pCq$ with $pq=qp=0$; furthermore, $p\perp q
\Rightarrow pCq$ with $p\oplus q=p+q=p\vee q\in P$. The lattice $P$,
equipped with the orthocomplementation $p\mapsto p\sp{\perp}=1-p$, is a
so-called \emph{orthomodular lattice} (OML) \cite{Beran, Kalm}.

\begin{definition} \label{df:generic}
Following Halmos \cite[p. 381]{Halmos}, we shall say that two projections
$p,q\in P$ are in \emph{generic position} iff
\[
p\wedge q=p\wedge q\sp{\perp}=p\sp{\perp}\wedge q=p\sp{\perp}\wedge q
 \sp{\perp}=0,
\]
or equivalently (De\! Morgan) iff
\[
p\vee q=p\vee q\sp{\perp}=p\sp{\perp}\vee q=p\sp{\perp}\vee q
 \sp{\perp}=1.
\]
\end{definition}

If $p\in P$ and $e\in E$, then $e\leq p$ iff $e=ep$ iff $e=pe$ \cite[Theorem
2.4]{FSynap}. Applying this result to the projection $1-p$ and the effect
$1-e$, we deduce that $p\leq e$ iff $p=ep$ iff $p=pe$. In particular, if
$p,q\in P$, then $p\leq q$ iff $p=pq$ iff $p=qp$.

As is well-known, for projections $p,q\in P$, the question of whether or
not $pCq$ can be settled (in various ways) purely in terms of lattice
operations in $P$. For instance,
\[
pCq\Leftrightarrow p=(p\wedge q)\vee(p\wedge q\sp{\perp}).
\]
Using this fact, we obtain the following theorem.

\begin{theorem} \label{th:rpandcommutativity}
Let $p,q\in P$ and define $p\sb{r}:=p\wedge(p\sp{\perp}\vee q)
\wedge(p\sp{\perp}\vee q\sp{\perp})\in P$. Then{\rm:}
\begin{enumerate}
\item $p=((p\wedge q)\vee(p\wedge q\sp{\perp}))\oplus p\sb{r}=
 (p\wedge q)\vee(p\wedge q\sp{\perp})\vee p\sb{r}=(p\wedge q)
 \oplus(p\wedge q\sp{\perp})\oplus p\sb{r}$.
\item $0\leq p\sb{r}\leq p$ and $p-p\sb{r}=(p\wedge q)\vee
 (p\wedge q\sp{\perp})$.
\item $pCq$ iff $p\sb{r}=0$.
\end{enumerate}
\end{theorem}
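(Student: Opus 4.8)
The plan is to show that $p\sb{r}$ is exactly the complement inside $p$ of the ``commutative part'' $p\sb{0}:=(p\wedge q)\vee(p\wedge q\sp{\perp})$; once the identity $p\sb{r}=p-p\sb{0}$ is in hand, all three statements are immediate. First I would note that $p\wedge q\leq q$ and $p\wedge q\sp{\perp}\leq q\sp{\perp}$, so $p\wedge q\perp p\wedge q\sp{\perp}$ and consequently
\[
p\sb{0}=(p\wedge q)\oplus(p\wedge q\sp{\perp})=(p\wedge q)+(p\wedge q\sp{\perp})=(p\wedge q)\vee(p\wedge q\sp{\perp})\in P .
\]
Since also $p\wedge q\leq p$ and $p\wedge q\sp{\perp}\leq p$, we get $p\sb{0}\leq p$, hence $p-p\sb{0}\in P$ and $p-p\sb{0}=p\wedge p\sb{0}\sp{\perp}$. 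The key step is then a single application of the De\,Morgan laws together with $q\sp{\perp\perp}=q$:
\[
p\sb{0}\sp{\perp}=(p\wedge q)\sp{\perp}\wedge(p\wedge q\sp{\perp})\sp{\perp}=(p\sp{\perp}\vee q\sp{\perp})\wedge(p\sp{\perp}\vee q),
\]
so that, by commutativity of the meet, $p-p\sb{0}=p\wedge(p\sp{\perp}\vee q)\wedge(p\sp{\perp}\vee q\sp{\perp})=p\sb{r}$. This already gives (ii): $0\leq p\sb{r}\leq p$ because $p\sb{r}=p\wedge p\sb{0}\sp{\perp}\in P$ lies below $p$, and $p-p\sb{r}=p\sb{0}=(p\wedge q)\vee(p\wedge q\sp{\perp})$.

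For (i) I would argue purely by orthogonality. Since $p\sb{r}=p\wedge p\sb{0}\sp{\perp}\leq p\sb{0}\sp{\perp}=(p\wedge q)\sp{\perp}\wedge(p\wedge q\sp{\perp})\sp{\perp}$, the projection $p\sb{r}$ is orthogonal to $p\wedge q$, to $p\wedge q\sp{\perp}$, and to $p\sb{0}$. Thus the three projections $p\wedge q$, $p\wedge q\sp{\perp}$, $p\sb{r}$ are pairwise orthogonal, so every orthosum appearing in (i) is defined, and---using that an orthosum of pairwise orthogonal projections equals both their ordinary sum and their join---each of the three expressions collapses to $p\sb{0}+p\sb{r}=p$. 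Finally (iii): by the criterion recalled just before the theorem, $pCq$ iff $p=(p\wedge q)\vee(p\wedge q\sp{\perp})=p\sb{0}$, and since $p\sb{r}=p-p\sb{0}$ this is equivalent to $p\sb{r}=0$.

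I do not expect a genuine obstacle here. The content is concentrated in recognizing, via De\,Morgan, that the rather opaque definition $p\sb{r}=p\wedge(p\sp{\perp}\vee q)\wedge(p\sp{\perp}\vee q\sp{\perp})$ is nothing but $p\wedge p\sb{0}\sp{\perp}$; after that point everything reduces to the orthomodular-lattice identities and the arithmetic of orthogonal projections recorded in Section~\ref{sc:OMLP}.
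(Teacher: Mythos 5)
Your proposal is correct and follows essentially the same route as the paper: both identify $p\sb{r}$ via De\,Morgan as $p\wedge((p\wedge q)\vee(p\wedge q\sp{\perp}))\sp{\perp}=p-((p\wedge q)\vee(p\wedge q\sp{\perp}))$, use the orthogonality of $p\wedge q$ and $p\wedge q\sp{\perp}$ (from $p\wedge q\leq q$, $p\wedge q\sp{\perp}\leq q\sp{\perp}$) to split the join into an orthosum, and deduce (iii) from the criterion $pCq\Leftrightarrow p=(p\wedge q)\vee(p\wedge q\sp{\perp})$. The only difference is cosmetic ordering (you extract (ii) first and then read off (i)), so no gaps to report.
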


\begin{proof}
We have both $p\wedge q\leq p$ and $p\wedge q\sp{\perp}\leq p$,
whence $(p\wedge q)\vee(p\wedge q\sp{\perp})\leq p$ and
(De\! Morgan)
\[
p-((p\wedge q)\vee(p\wedge q\sp{\perp}))=p\wedge((p\wedge q)
 \vee(p\wedge q\sp{\perp}))\sp{\perp}=p\wedge(p\sp{\perp}
 \vee q\sp{\perp})\wedge(p\sp{\perp}\vee q)
\]
\[
=
p\wedge(p\sp{\perp}\vee q)\wedge(p\sp{\perp}\vee q\sp{\perp})=
p\sb{r},
\]
whereupon
\[
p=((p\wedge q)\vee(p\wedge q\sp{\perp}))\oplus p\sb{r}=
 (p\wedge q)\vee(p\wedge q\sp{\perp})\vee p\sb{r}.
\]
Also, $p\wedge q\leq q$ and $p\wedge q\sp{\perp}\leq q\sp{\perp}$,
so $(p\wedge q)\perp(p\wedge q\sp{\perp})$ and $(p\wedge q)\vee
(p\wedge q\sp{\perp})=(p\wedge q)\oplus(p\wedge q\sp{\perp})$,
whence $p=(p\wedge q)\oplus(p\wedge q\sp{\perp})\oplus p\sb{r}$,
completing the proof of (i).

That $0\leq p\sb{r}\leq p$ is clear, $p-p\sb{r}=(p\wedge q)
\vee (p\wedge q\sp{\perp})$ follows from (i), and we have (ii).
Part (iii) is a consequence of (ii) and the fact that $pCq
\Leftrightarrow p=(p\wedge q)\vee(p\wedge q\sp{\perp})$.
\end{proof}

In view of Theorem \ref{th:rpandcommutativity} (iii), we can regard
the projection $$p\sb{r}:=p\wedge(p\sp{\perp}\vee q)\wedge(p\sp{\perp}
\vee q\sp{\perp})$$ as a sort of measure of the extent to which $p$
commutes with $q$. Indeed, $pCq$ iff $p\sb{r}=0$; also $0\leq p\sb{r}
\leq p$ and if $p\not=0$, then in some sense, the ``larger" $p\sb{r}$
is, the ``greater the lack of commutativity of $p$ and $q$," culminating
in the case in which $p\sb{r}=p$. By Theorem \ref{th:rpandcommutativity}
(ii), $p\sb{r}=p$ iff $(p\wedge q)\vee(p\wedge q\sp{\perp})=0$ iff
$p\wedge q=p\wedge q\sp{\perp}=0$.

An alternative measure of the extent to which $p$ commutes with $q$,
the \emph{Marsden commutator} $[p,q]$, was introduced by E.L. Marsden,
Jr. in \cite{Marsden}.

\begin{definition} \label{df:commutator}
If $p,q\in P$, then
$$[p,q]:=(p\vee q)\wedge(p\vee q\sp{\perp})\wedge(p\sp{\perp}\vee q)
 \wedge(p\sp{\perp}\vee q\sp{\perp}).$$
\end{definition}
\noindent For the Marsden commutator, we have $0\leq [p,q]\leq 1$, and
as is proved in \cite{Marsden}, $pCq\Leftrightarrow[p,q]=0$. The
relationship between $[p,q]$ and the projection $p\sb{r}$ in Theorem
\ref{th:rpandcommutativity} is explicated in Section \ref{sc:2projcom}
below. We note that $[p,q]$ is as large as possible, i.e., $[p,q]=1$,
iff $p$ and $q$ are in generic position, a situation which is studied
in Sections \ref{sc:Generic} and \ref{sc:dropdown} below.

We recall some additional basic facts regarding commutativity in $P$. Let
$p,q,r\in P$. If $pCq$, then $p\wedge q=pq=qp$ and $p\vee q=p+q-pq$. Also,
$pCq$ iff $pCq\sp{\perp}$, and if either $p\leq q$ or $p\perp q$, then
$pCq$. Furthermore, if $pCq$ and $pCr$, then $pC(q\vee r)$ and $pC
(q\wedge r)$.  Calculations in the OML $P$ are facilitated by the
following theorem \cite[Theorem 5, p. 25]{Kalm} which we use routinely
in what follows:
\begin{theorem} \label{th:distributive}
For $p,q,r\in P$, if any two of the relations $pCq$, $pCr$, or $qCr$
hold, then $p\wedge(q\vee r)=(p\wedge q)\vee(p\wedge r)$ and $p\vee
(q\wedge r)=(p\vee q)\wedge(p\vee r)$.
\end{theorem}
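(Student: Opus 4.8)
The plan is to prove Theorem~\ref{th:distributive} by reducing the two stated distributive identities to each other via the De\,Morgan laws and the fact that $pCq \Leftrightarrow pCq\sp{\perp}$, so that only one identity—say $p\wedge(q\vee r)=(p\wedge q)\vee(p\wedge r)$—needs to be established. For that identity I would split into cases according to which two of the three commutativity relations are assumed. By symmetry of the claim in $q$ and $r$, there are essentially two cases: (a) $pCq$ and $pCr$, and (b) $qCr$ together with (say) $pCq$.

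In case (a), the key observation is that $p$, $q$, $r$ all lie in a common commutative sub-structure. Concretely, since $pCq$ and $pCr$, the facts recalled just before Theorem~\ref{th:distributive} give $pC(q\vee r)$ and $pC(q\wedge r)$, and one checks that $q\wedge r \leq q\vee r$ with everything in $C(p)$; within the commutative world, $\wedge$, $\vee$ coincide with the effect-algebra/Boolean operations ($p\wedge q = pq$, $p\vee q = p+q-pq$), and distributivity is then a direct algebraic computation: expand $p(q\vee r)=p(q+r-qr)=pq+pr-pqr$ and, separately, $(pq)\vee(pr)=pq+pr-(pq)(pr)=pq+pr-pqr$, using $qCr$? — no, here we do \emph{not} have $qCr$, so instead I would use the OML structure: inside the Boolean algebra generated by a commuting family, or more carefully, use the known fact that $C(p)$ is closed under the lattice operations and that $p\wedge(\cdot)$ is a lattice homomorphism from $C(p)$ onto the principal ideal $[0,p]$. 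This gives $p\wedge(q\vee r)=(p\wedge q)\vee(p\wedge r)$ immediately.

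In case (b), with $qCr$ and $pCq$ but no assumption on $pCr$, the argument is the genuinely orthomodular one. I would invoke the standard OML identity (Kalmbach, the cited Theorem~5): the sublattice generated by two commuting elements together with one further arbitrary element is distributive—equivalently, if $qCr$ then for \emph{any} $p$ one has $p\wedge(q\vee r)$ and $(p\wedge q)\vee(p\wedge r)$ related by $\leq$, and the reverse inequality $p\wedge(q\vee r)\leq (p\wedge q)\vee(p\wedge r)$ requires the extra $pCq$. A clean route: set $q_1:=q$, $q_2:=q\sp{\perp}\wedge r = r-(q\wedge r)$ (using $qCr$ so $q\wedge r = qr$, and $q\sp{\perp}Cr$), so that $q\vee r = q_1\oplus q_2$ is an orthogonal decomposition; then $p\wedge(q_1\oplus q_2)$ can be handled using $pCq_1$ to peel off $p\wedge q_1$, leaving a reduced problem inside $q_1\sp{\perp}$, and iterate. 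Since this is exactly the content of the cited \cite[Theorem 5, p.\,25]{Kalm}, I would simply cite it; the only thing to verify ``by hand'' is that our $P$ really is an OML (already stated in Section~\ref{sc:OMLP}) and that commutativity $C$ here agrees with OML-commutativity (also stated: $pCq\Leftrightarrow p=(p\wedge q)\vee(p\wedge q\sp{\perp})$).

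The main obstacle is not depth but bookkeeping: making sure that the purely lattice-theoretic statement of Theorem~\ref{th:distributive} is genuinely identical to Kalmbach's OML theorem, so that the citation is legitimate, and—if one instead wants a self-contained proof—correctly tracking the orthogonal decompositions in case (b) without accidentally assuming $pCr$. I expect the cleanest writeup is: first dispatch the ``both meet $p$'' case by the commutative homomorphism argument, then reduce the remaining case to Kalmbach's cited result, and finally derive the second ($\vee$-) identity from the first by orthocomplementation. Since the theorem is quoted verbatim from \cite{Kalm}, the honest proof is a one-line citation, and that is what I would give in the paper, with the case analysis above relegated to a remark if a reader wants the idea.
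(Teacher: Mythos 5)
Your conclusion matches the paper exactly: the paper gives no proof of this statement at all, presenting it purely as a citation of Kalmbach's Theorem 5 (the standard Foulis--Holland distributivity theorem for orthomodular lattices), which is precisely the ``one-line citation'' you settle on. Your sketched case analysis is a reasonable outline of how one would prove it from scratch, but it plays no role in the paper and need not be assessed further.
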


If $a,b\in A$, then it turns out that $aba\in A$, whence we define
the \emph{quadratic mapping} $J\sb{a}\colon A\to A$ by $J\sb{a}b:=aba$
for all $b\in A$. The quadratic mapping $J\sb{a}$ is linear and order
preserving on $A$.

A \emph{synaptic automorphism} on $A$ is a mapping $J\colon A\to A$
such that (1) $J$ is a bijection, (2) $J$ is an order automorphism
on $A$, (3) $J$ is a linear automorphism on $A$, and for all $a,b\in A$,
(4) $ab\in A$ iff $JaJb\in A$ and (5) $ab\in A\Rightarrow J(ab)=JaJb$.

An element $u\in A$ is called a \emph{symmetry} \cite{SymSA} iff
$u\sp{2}=1$, and a \emph{partial symmetry} is an element $t\in A$
such that $t\sp{2}\in P$. By the uniqueness theorem for square roots,
a projection is the same thing as a partial symmetry $p$ such that
$0\leq p$. Each partial symmetry $t\in A$ has a \emph{canonical extension}
to a symmetry $u:=t+(t\sp{2})\sp{\perp}$. If $u$ is a symmetry, then the
quadratic mapping $J\sb{u}$, called a \emph{symmetry transformation}, is
a synaptic automorphism of $A$ and $J\sb{u}\sp{-1}=J\sb{u}$. If $u$ is a
symmetry, then so is $-u$, and $J\sb{u}=J\sb{-u}$. If $u$ and $v$ are
symmetries, then so are $J\sb{u}v=uvu$ and $J\sb{v}u=vuv$. By the uniqueness
theorem for square roots, if $u$ is a symmetry, then $0\leq u\Leftrightarrow
u=1$.

Two projections $p,q\in P$ are \emph{exchanged by a symmetry} $u\in A$
iff $J\sb{u}p=upu=q$ (whence, automatically, $J\sb{u}q=uqu=p$, $J
\sb{u}p\sp{\perp}=up\sp{\perp}u=q\sp{\perp}$, and $J\sb{u}q\sp{\perp}
=uq\sp{\perp}u=p\sp{\perp}$). If $p$ and $q$ are exchanged by a symmetry
$u$, then they are also exchanged by the symmetry $-u$. The two projections
$p$ and $q$ are \emph{exchanged by a partial symmetry} $t\in A$ iff $tpt=q$
and $tqt=p$. If $p$ and $q$ are exchanged by a partial symmetry $t$ and
if $u:=t+(t\sp{2})\sp{\perp}$ is the canonical extension of $t$ to a symmetry,
then $p$ and $q$ are exchanged by the symmetry $u$.

Let $a\in A$. Then there is a uniquely determined projection $a\dg\in P$,
called the \emph{carrier} of $a$, such that, for all $b\in A$, $ab=0
\Leftrightarrow a\dg b=0$. It turns out that $a=aa\dg=a\dg a$, $a\dg\in
CC(a)$, and $a\dg$ is the smallest projection $p\in P$ such that $a=ap$
(or, equivalently, $a=pa$). If $n$ is a positive integer, then $(a\sp{n})
\dg=|a|\dg=a\dg$. Furthermore, if $b\in A$ and $0\leq a\leq b$, then $a
\dg\leq b\dg$.

By \cite[Definition 4.8, Theorem 4.9 (v), and Theorem 5.6]{FSynap}, we
have the following result which we shall need in the proof of Lemma
\ref{lm:diag,offdiag,zero} below and then later in Section \ref
{sc:sinecosine}.

\begin{lemma} \label{lm:carofpqp}
Let $p,q\in P$ and let $0\leq a\in A$. Then{\rm:}
\begin{enumerate}
\item $(J\sb{p}a)\dg=(pap)\dg=(pa\dg p)\dg=p\wedge(p\sp{\perp}\vee a\dg)$.
\item $(pqp)\dg=p\wedge(p\sp{\perp}\vee q)$.
\end{enumerate}
\end{lemma}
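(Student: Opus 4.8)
The plan is to reduce everything to the single identity $(pap)\dg=p\wedge(p\sp{\perp}\vee a\dg)$ for $0\le a\in A$. Granting this, the rest is formal: $(J\sb{p}a)\dg=(pap)\dg$ is the definition of the quadratic mapping; the middle member $(pa\dg p)\dg$ of (i) is the same identity applied to the projection $a\dg$ (using $(a\dg)\dg=a\dg$); and (ii) is (i) with $a$ replaced by $q$, since $q\dg=q$ for a projection.

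So fix $0\le a\in A$ and put $r:=p\wedge(p\sp{\perp}\vee a\dg)$ and $g:=p\wedge(a\dg)\sp{\perp}$, so that $r\sp{\perp}=p\sp{\perp}\vee g$ by De\,Morgan. First I would check $(pap)\dg\le r$: one computes $(pap)p\sp{\perp}=pa(pp\sp{\perp})=0$ and $(pap)g=pa(pg)=p(ag)=0$, the latter because $g\le p$ absorbs the inner $p$ and $g\le(a\dg)\sp{\perp}$ makes $ag=aa\dg g=0$; so by the defining property of the carrier $(pap)\dg$ is orthogonal to both $p\sp{\perp}$ and $g$, hence to $p\sp{\perp}\vee g$, i.e.\ $(pap)\dg\le(p\sp{\perp}\vee g)\sp{\perp}=r$. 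For the reverse inequality, set $k:=r\wedge((pap)\dg)\sp{\perp}$; by orthomodularity $r=(pap)\dg\oplus k$, so it suffices to show $k=0$. Here $k\le r\le p$ gives $pk=kp=k$, and $k\le((pap)\dg)\sp{\perp}$ gives $(pap)\dg k=0$, so $(pap)k=(pap)(pap)\dg k=0$ and therefore $kak=k(pap)k=0$. If one knows that $kak=0$ forces $ak=0$ (for $k$ a projection, $0\le a$), then $a\dg k=0$ by the carrier property, i.e.\ $k\le(a\dg)\sp{\perp}$; together with $k\le p$ this puts $k\le g\le r\sp{\perp}$, and with $k\le r$ it follows that $k\le r\wedge r\sp{\perp}=0$. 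Thus $(pap)\dg=r$, which is (i), and (ii) follows.

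The step ``$kak=0\Rightarrow ak=0$'' is the only one that is not formal bookkeeping in the orthomodular lattice $P$, and I expect it to be the crux. Internally to $A$ one gets $a\sp{1/2}ka\sp{1/2}=0$, since its square is $a\sp{1/2}(kak)a\sp{1/2}=0$ and $A$ has no nonzero square-zero element (if $b\sp{2}=0$ then $|b|=(b\sp{2})\sp{1/2}=0$, hence $b\dg=|b|\dg=0$, hence $b=bb\dg=0$); consequently $aka=a\sp{1/2}(a\sp{1/2}ka\sp{1/2})a\sp{1/2}=0$, and then, computing in the enveloping algebra $R$, $(ak)(ak)\sp{*}=(ak)(ka)=aka=0$, so $ak=0$. (This is also what one extracts from \cite[Theorem 4.9(v)]{FSynap}; likewise the projection-case identity $(pep)\dg=p\wedge(p\sp{\perp}\vee e)$ for $e\in P$, which is all that (ii) needs, is \cite[Theorem 5.6]{FSynap}.) In the special case $a=q\in P$ the implication stays inside $A$ altogether: $kqk=0$ gives $(qkq)\sp{2}=q(kqk)q=0$, so $qkq=0$; then $(qk+kq)\sp{2}=0$ on expanding and using $q\sp{2}=q$, $k\sp{2}=k$, $kqk=qkq=0$, so $qk+kq=0$, and left multiplication by $q$ gives $qk=0$.
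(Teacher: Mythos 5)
You should know at the outset that the paper itself gives no proof of this lemma: it is imported wholesale from \cite{FSynap} (Definition 4.8, Theorem 4.9(v), Theorem 5.6), so your argument is a self-contained reconstruction rather than a variant of anything in the text, and most of it is correct. The reduction of all four equalities to the single identity $(pap)\dg=p\wedge(p\sp{\perp}\vee a\dg)$ is fine; the inequality $(pap)\dg\le r$ via $(pap)p\sp{\perp}=0$, $(pap)g=0$ and the defining property of the carrier is fine; and the orthomodular step producing $k:=r\wedge\left((pap)\dg\right)\sp{\perp}$ with $r=(pap)\dg\oplus k$ and $kak=0$ is fine, as is your observation that $A$ contains no nonzero square-zero element.

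The one step that does not hold up as written is the final passage from $aka=0$ to $ak=0$ for general $0\le a$: you compute $(ak)(ak)\sp{\ast}=(ak)(ka)=aka=0$ ``in the enveloping algebra $R$.'' But the enveloping algebra of an abstract synaptic algebra is merely an associative algebra containing $A$; it is not assumed to carry an involution, let alone the positivity property $xx\sp{\ast}=0\Rightarrow x=0$. That reasoning is valid for ${\mathcal B}({\mathcal H})$ but not in the generality in which the lemma is stated, and it is precisely the point that the synaptic-algebra framework supplies abstractly: among the defining conditions in \cite{FSynap} one has that $bab=0$ with $0\le a$ forces $ab=ba=0$, which is exactly your implication with $b=k$. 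Alternatively, the gap closes with the trick you already used in the projection case, applied to $a\sp{1/2}$ instead of $q$: from $a\sp{1/2}ka\sp{1/2}=0$ (your square-zero argument) and $kak=0$, expanding shows $(a\sp{1/2}k+ka\sp{1/2})\sp{2}=0$, and $a\sp{1/2}k+ka\sp{1/2}\in A$, so it vanishes; sandwiching with $k$ gives $ka\sp{1/2}k=0$, and then multiplying $a\sp{1/2}k+ka\sp{1/2}=0$ on the right by $k$ gives $a\sp{1/2}k=0$, whence $ak=a\sp{1/2}(a\sp{1/2}k)=0$. With that substitution (or with an explicit appeal to the cited axiom/result of \cite{FSynap} rather than to an involution on $R$) your proof is complete and stays entirely inside $A$, which is more informative than the paper's bare citation.
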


We shall also have use for the next two results which follow from
\cite[Lemma 4.1]{SymSA} and \cite[Theorem 5.5]{ComSA}.

\begin{lemma} \label{lm:carrierofsum}
If $0\leq a\sb{1}, a\sb{2},..., a\sb{n}\in A$, then
\[
(\sum\sb{i=1}\sp{n}a\sb{i})\dg=\bigvee\sb{i=1}\sp{n}(a\sb{i})\dg.
\]
\end{lemma}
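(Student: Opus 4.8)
The plan is to reduce to the case $n=2$ by induction on $n$ and then to prove $(a+b)\dg=a\dg\vee b\dg$ for $0\leq a,b\in A$ directly from the characterizing properties of the carrier recalled in Section \ref{sc:OMLP}. The base case $n=1$ is trivial; for the inductive step I would write $\sum\sb{i=1}\sp{n}a\sb{i}=(\sum\sb{i=1}\sp{n-1}a\sb{i})+a\sb{n}$, observe that the partial sum $\sum\sb{i=1}\sp{n-1}a\sb{i}$ is again nonnegative because $A$ is a partially ordered linear space, and then combine the $n=2$ case with the induction hypothesis and associativity of the join in the lattice $P$.

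For $n=2$, one inclusion is immediate from monotonicity of the carrier map: since $0\leq a\leq a+b$ and $0\leq b\leq a+b$ we get $a\dg\leq(a+b)\dg$ and $b\dg\leq(a+b)\dg$, hence $a\dg\vee b\dg\leq(a+b)\dg$. For the reverse inclusion I would set $p:=a\dg\vee b\dg\in P$. From $a\dg\leq p$ we have $a\dg p\sp{\perp}=0$, so, computing in the enveloping algebra $R$ and using $a=aa\dg$,
\[
ap\sp{\perp}=(aa\dg)p\sp{\perp}=a(a\dg p\sp{\perp})=0,
\]
and similarly $bp\sp{\perp}=0$. Adding these gives $(a+b)p\sp{\perp}=0$, i.e., $a+b=(a+b)p$; since $(a+b)\dg$ is the smallest projection $r\in P$ with $a+b=(a+b)r$, it follows that $(a+b)\dg\leq p=a\dg\vee b\dg$. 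Combining the two inclusions finishes the case $n=2$ and hence the lemma.

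I do not expect a serious obstacle here; the one point that needs a moment's care is that products of elements of $A$ need not lie in $A$, but in the displayed computation every product either lies in $A$ by hypothesis ($aa\dg\in A$ since $a\dg\in CC(a)$) or equals $0\in A$, so associativity in $R$ may be used without worry. One could instead simply cite the results from \cite{SymSA, ComSA}, but the argument above is self-contained given the facts already stated.
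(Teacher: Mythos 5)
Your argument is correct. Note, however, that the paper gives no proof of this lemma at all: it simply records it as a consequence of \cite[Lemma 4.1]{SymSA} and \cite[Theorem 5.5]{ComSA}, so there is no internal argument to compare against. What you supply is a self-contained elementary proof using only the facts about carriers restated in Section \ref{sc:OMLP}: monotonicity ($0\leq a\leq b\Rightarrow a\dg\leq b\dg$) gives $a\dg\vee b\dg\leq(a+b)\dg$, and minimality of the carrier together with $a=aa\dg$ and $a\dg\leq p\Rightarrow a\dg p\sp{\perp}=0$ gives the reverse inequality; the induction then handles general $n$, using that finite joins exist in the lattice $P$ and that partial sums of nonnegative elements remain nonnegative. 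Your remark about associativity in the enveloping algebra $R$ is exactly the right point of care, and it is handled correctly (each product in the display is either in $A$ or equal to $0$). The trade-off is the expected one: citing \cite{SymSA, ComSA} keeps the paper short and consistent with its practice of importing synaptic-algebra machinery, while your argument has the advantage of being verifiable entirely from the facts already listed in Section \ref{sc:OMLP}, and it would be a perfectly acceptable replacement for the citation.
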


\begin{lemma} \label{lm:carrierofprod}
If $a,b,ab\in A$, then $(ab)\dg=a\dg b\dg=b\dg a\dg=a\dg\wedge b\dg$.
\end{lemma}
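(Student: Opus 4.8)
The plan is to peel off the two easy equalities $a\dg b\dg=b\dg a\dg=a\dg\wedge b\dg$ first and then concentrate on the substantive assertion $(ab)\dg=a\dg\wedge b\dg$. Since $ab\in A$, the product $ab$ is ``self-adjoint'', which forces $ab=ba$, i.e.\ $aCb$ (a standard fact about synaptic algebras). Consequently $a\dg\in CC(a)$ commutes with $b$, and then $b\dg\in CC(b)$ commutes with $a\dg$; thus $p:=a\dg$ and $q:=b\dg$ are commuting projections, so $pq=qp=p\wedge q\in P$. This settles the first two equalities and leaves only $(ab)\dg=p\wedge q$ to prove.

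Next I would reduce to the positive case. Since $aCb$ we have $|ab|=|a|\,|b|$, $|a|C|b|$, $|a|\dg=a\dg$, $|b|\dg=b\dg$, and $(ab)\dg=|ab|\dg$, so, replacing $a,b$ by $|a|,|b|$, we may assume $0\le a,b$; then $a\sp{1/2}Cb$ gives $ab=a\sp{1/2}ba\sp{1/2}=J\sb{a\sp{1/2}}b\ge 0$. One inclusion is then immediate: from $a=aa\dg$ and $a\dg Cb$ we get $ab=aa\dg b=aba\dg=(ab)a\dg$, so $(ab)\dg\le a\dg$, and symmetrically $(ab)\dg\le b\dg$; hence $(ab)\dg\le p\wedge q=:e$. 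The same computation also yields $ab=(ab)e$, so $ab$ in fact lies in the corner $eAe$.

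For the reverse inclusion $e\le(ab)\dg$ I would work inside that corner (itself a synaptic algebra, with unit $e$). Putting $a':=eae$, $b':=ebe$, and using that $e$ commutes with $a$ and with $b$, one gets $a'b'=e\,ab\,e=ab$, while Lemma \ref{lm:carofpqp}(i) gives $a'\dg=(eae)\dg=e\wedge(e\sp{\perp}\vee a\dg)=e$ (because $e\le a\dg$ makes $e\sp{\perp}\vee a\dg=1$) and likewise $b'\dg=e$. So the claim reduces to: the product of two commuting positive elements of $eAe$, each of full carrier, again has full carrier; equivalently (De\,Morgan), the largest projection annihilated by $ab$ is the join of those annihilated by $a$ and by $b$. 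In a commutative ambience this is just the elementary identity $\operatorname{supp}(fg)=\operatorname{supp}(f)\cap\operatorname{supp}(g)$, so I would finish by passing to the commutative synaptic algebra $CC(\{a,b\})$ and invoking its structure and spectral theory --- essentially \cite[Theorem 5.5]{ComSA} together with \cite[Lemma 4.1]{SymSA}.

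The main obstacle is precisely this reverse inclusion $e\le(ab)\dg$: it is the only point at which ``hidden cancellation'' inside the product $ab$ must be excluded, and it lies beyond the reach of the purely formal carrier identities ($c=cc\dg$, $(c\sp{n})\dg=c\dg$, $|c|\dg=c\dg$, Lemmas \ref{lm:carrierofsum} and \ref{lm:carofpqp}) that dispatch everything else. The alternatives I see are (a) the commutative reduction sketched above, which rests on the spectral theory of synaptic algebras, and (b) a more hands-on argument carried out in the enveloping algebra $R$, using its involution and the faithfulness of the carrier of $a$ to show that any projection killed by $ab$ is orthogonal to $e$; route (a) is the cleaner one and is what I would write up.
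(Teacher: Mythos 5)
The paper never actually proves this lemma: it is imported from the literature, the sentence introducing it saying only that it ``follows from'' \cite[Lemma 4.1]{SymSA} and \cite[Theorem 5.5]{ComSA}. So the official justification is precisely the citation you reach for at the end, and that is where your proposal has a genuine gap rather than a genuinely different route. The step you yourself single out as the main obstacle --- the reverse inclusion $a\dg\wedge b\dg\leq(ab)\dg$ --- is never argued: ``passing to the commutative synaptic algebra $CC(\{a,b\})$ and invoking its structure and spectral theory, essentially \cite[Theorem 5.5]{ComSA} together with \cite[Lemma 4.1]{SymSA}'' is, from the standpoint of this paper, an appeal to the very theorem from which the entire lemma is quoted. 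As a blind, self-contained proof the plan is therefore circular exactly where it matters; to make it honest you would have to supply the content of that step yourself (for instance via a functional representation of the commutative synaptic algebra generated by $a$ and $b$, together with an argument identifying carriers with supports), and none of that is carried out. Conversely, if quoting \cite[Theorem 5.5]{ComSA} is allowed, the lemma is immediate and the reduction to positives, the corner $eAe$, and the use of Lemma \ref{lm:carofpqp} do no real work.

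A second, smaller issue: you open by asserting as a ``standard fact'' that $a,b,ab\in A$ forces $ab=ba$. The paper records only the converse ($aCb\Rightarrow ab\in A$) and the special case $ab=0$; the implication you want is clear in the Hilbert-space model because of the involution, but in an abstract synaptic algebra it is not among the facts listed here and needs its own citation or proof --- note that the lemma is stated with hypothesis $ab\in A$ and with $a\dg b\dg=b\dg a\dg$ as part of the \emph{conclusion}, which suggests the authors did not regard commutativity as free. Everything downstream of that claim in your sketch (commuting carriers, $|ab|=|a||b|$, positivity of $ab$ after the reduction, and the easy inclusion $(ab)\dg\leq a\dg\wedge b\dg$) depends on it; those computations are otherwise correct.
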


If $a\in A$, there is a partial symmetry $t\in A$, called the \emph
{signum} of $a$, such that $t\sp{2}=a\dg$, $t\in CC(a)$, $a=|a|t=t|a|$,
and $|a|=ta=at$. If $u:=t+(a\dg)\sp{\perp}$ is the canonical extension
of $t$ to a symmetry, then $u\in CC(a)$, $a=|a|u=u|a|$, and $|a|=ua=au$.
The formula $a=|a|u=u|a|$ is referred to as the \emph{polar decomposition}
of $a$.

In Section \ref{sc:sinecosine}, we shall also need the following theorem
\cite[Theorem 6.5]{FPSynap}.
\begin{theorem} \label{th:6.5FPSynap}
Let $p,q\in P$ and let $p-q\sp{\perp}=|p-q\sp{\perp}|u=u|p-q\sp{\perp}|$
be the polar decomposition of $p-q\sp{\perp}$, so that $u$ is a symmetry
that double commutes with $p-q\sp{\perp}$. Then{\rm: (i)} upqpu=qpq.
{\rm(ii)} $u(p\wedge(p\sp{\perp}\vee q))u=q\wedge(p\vee q\sp{\perp})$.
\end{theorem}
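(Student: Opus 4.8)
The plan is to reduce the whole statement to two elementary identities in the enveloping algebra $R$. Set $b:=p-q\sp{\perp}=p+q-1\in A$. Expanding products, one checks immediately that $bp=qp$, $pb=pq$, $bq=pq$, and $qb=qp$, and consequently
\[
bpb=qpq\qquad\text{and}\qquad bqb=pqp.
\]
A second short computation, starting from $b\sp{2}=1-p-q+pq+qp$, shows $b\sp{2}q=qb\sp{2}=qpq$, so $q\in C(b\sp{2})$; since $|b|=(b\sp{2})\sp{1/2}\in CC(b\sp{2})$, it follows that $|b|\,C\,q$. Finally, from the polar decomposition $b=|b|u=u|b|$ (with $u$ a symmetry, so $u\sp{2}=1$) we get $ub=bu=|b|$.

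Part (i) is then immediate. Since $pqp=bqb$,
\[
upqpu=u(bqb)u=(ub)\,q\,(bu)=|b|\,q\,|b|=q\,|b|\sp{2}=qb\sp{2}=qpq,
\]
where the fifth equality uses $|b|\,C\,q$ and the last uses $b\sp{2}q=qpq$. (Each intermediate expression---$bqb$, $|b|q|b|$, $qb\sp{2}$---lies in $A$, so the rearrangement inside $R$ is harmless.)

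For part (ii) I would first rewrite both sides as carriers. By Lemma \ref{lm:carofpqp}(ii), $p\wedge(p\sp{\perp}\vee q)=(pqp)\dg$, and, interchanging $p$ and $q$, $q\wedge(p\vee q\sp{\perp})=q\wedge(q\sp{\perp}\vee p)=(qpq)\dg$. Hence (ii) is equivalent to $u\,(pqp)\dg\,u=(qpq)\dg$, which by part (i) is the same as $u\,(pqp)\dg\,u=\bigl(u(pqp)u\bigr)\dg$. So it suffices to prove the general fact that the symmetry transformation $J\sb{u}$ preserves carriers: $(uau)\dg=ua\dg u$ for all $a\in A$. This follows from the defining property of the carrier together with $u\sp{2}=1$: given $c\in A$, put $d:=ucu\in A$, so that $c=udu$ and $uau\cdot c=u(ad)u$ in $R$; then $uau\cdot c=0\Leftrightarrow ad=0\Leftrightarrow a\dg d=0\Leftrightarrow ua\dg u\cdot c=0$, and since $ua\dg u\in P$, uniqueness of the carrier yields $(uau)\dg=ua\dg u$. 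Taking $a=pqp$ finishes the proof.

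The only ingenuity needed is spotting the identities $bpb=qpq$, $bqb=pqp$ and the commutation $b\sp{2}\,C\,q$; once these are in hand nothing else is hard. The two points that deserve a moment's care are the deduction of $|b|\,C\,q$ from $b\sp{2}\,C\,q$ (via $|b|\in CC(b\sp{2})$), and checking that every factor appearing in the display for $upqpu$ really belongs to $A$, so that the associativity manipulations carried out in $R$ are legitimate.
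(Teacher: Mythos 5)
Your proof is correct and complete. One point of comparison is worth making explicit: the paper does not prove this theorem at all—it is imported verbatim from \cite[Theorem 6.5]{FPSynap}—so what you have supplied is a self-contained derivation from the background material of Section \ref{sc:OMLP}, which is a genuine addition rather than a rediscovery of an internal argument. Your ingredients all check out: with $b:=p-q\sp{\perp}=p+q-1$ the identities $bp=qp$, $pb=pq$, $bq=pq$, $qb=qp$ give $bqb=pqp$ and $bpb=qpq$; the computation $b\sp{2}q=qb\sp{2}=qpq$ gives $qCb\sp{2}$, hence $|b|Cq$ because $|b|=(b\sp{2})\sp{1/2}\in CC(b\sp{2})$; and $ub=bu=|b|$ follows from $u\sp{2}=1$. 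The chain $upqpu=u(bqb)u=|b|q|b|=q|b|\sp{2}=qb\sp{2}=qpq$ is legitimate since all manipulations take place in the associative enveloping algebra $R$ and each displayed expression lies in $A$. For (ii), your reduction via Lemma \ref{lm:carofpqp}(ii) to the statement $(uau)\dg=ua\dg u$ is sound, and your proof of that carrier identity—using that $J\sb{u}$ maps $A$ onto $A$ and $P$ into $P$, that $u(ad)u=0\Leftrightarrow ad=0$, and the uniqueness clause in the definition of the carrier—is exactly right; since Lemma \ref{lm:carofpqp} precedes the theorem and comes from a different source, no circularity arises. The only blemish is cosmetic: in the displayed chain for $upqpu$ it is the fourth equality (from $|b|q|b|$ to $q|b|\sp{2}$), not the fifth, that uses $|b|Cq$.
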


Consider the synaptic algebra ${\mathcal A}$ of self-adjoint operators
on a Hilbert space ${\mathcal H}$. If $B\in{\mathcal A}$, then the
carrier $B\dg$ of $B$ is the projection onto the closure of the range
of $B$. Thus, $B\dg=I$, the identity operator on ${\mathcal H}$, iff
$B$ is injective and the range of $B$ is dense in ${\mathcal H}$.
Also, a symmetry $U\in{\mathcal A}$ is the same thing as a
self-adjoint unitary operator on ${\mathcal H}$. Halmos' work in
\cite{Halmos} involves unitary operators mapping one linear subspace
${\mathcal M}$ of ${\mathcal H}$ onto another linear subspace
${\mathcal N}$ of ${\mathcal H}$. Assuming that ${\mathcal M}$ and
${\mathcal N}$ are closed, let $P\sb{\mathcal M}$ and $P\sb
{\mathcal N}$ be the (orthogonal) projections onto ${\mathcal M}$
and ${\mathcal N}$, respectively, and suppose that $U$ is a symmetry
in ${\mathcal A}$ that exchanges $P\sb{\mathcal M}$ and $P\sb
{\mathcal N}$.  Then the restriction $U|{\mathcal M}$ of $U$ to
${\mathcal M}$ is a unitary isomorphism from ${\mathcal M}$ onto
${\mathcal N}$. Thus, in our synaptic algebra $A$, the situation in
which a symmetry $u$ exchanges a projection $p$ with a projection
$q$ may be regarded as an analogue of a situation in which two closed
subspaces of a Hilbert space are unitarily equivalent via the restriction
of a self-adjoint unitary operator.

If $n$ is a positive integer and there are $n$, but not $n+1$ pairwise
orthogonal nonzero projections in $P$, we say that the synaptic algebra
$A$ has \emph{rank} $n$. On the other hand, if there is an infinite
sequence of pairwise orthogonal nonzero projections in $P$, then we say
that $A$ has \emph{infinite rank}. If $A$ is finite dimensional, then
it has finite rank, but there are infinite dimensional synaptic algebras
of finite rank.

\begin{remarks} \label{ex:spinfactor}
It is not difficult to see that \emph{$A$ is of rank 2 iff every pair of
distinct nonorthogonal projections in $P\setminus\{0,1\}$ is in generic
position}. In \cite[\S 19]{Top65}, D. Topping introduced the important notion
of a \emph{spin factor} and, using the results in \cite{FPSpin}, it can be
shown that a Topping spin factor of dimension greater than 1 is the same
thing as synaptic algebra of rank 2. We note that there are infinite-dimensional
synaptic algebras of rank 2.
\end{remarks}

If $r\in P$, then with the partial order and operations inherited
from $A$,
\[
rAr:=J\sb{r}(A)=\{J\sb{r}a:a\in A\}=\{rar:a\in A\}=\{b\in A:b=br=rb\}
\]
is a synaptic algebra in its own right with $rRr$ as its enveloping algebra
and $r$ as its unit element \cite[Theorem 4.10]{FSynap}. The orthomodular
lattice of projections in $rAr$ is the sublattice $P[0,r]:=\{p\in P:p
\leq r\}$ of $P$, and the orthocomplement in $P[0,r]$ of $p\in P[0,r]$ is
$p\sp{\perp\sb{r}}:=p\sp{\perp}\wedge r=r-p$. If $t$ is a symmetry in the
synaptic algebra $rAr$, then $t$ is a partial symmetry in $A$ and its
canonical extension to a symmetry in $A$ is $u:=t+r\sp{\perp}$. Thus, if
$p,q\in P[0,r]$ and if $p$ and $q$ are exchanged by a symmetry $t$ in
$rAr$, then $p$ and $q$ are exchanged by a symmetry $u$ in $A$. Let $a\in
rAr$. Then, if $0\leq a$, it follows that $a\sp{1/2}\in rAr$. Moreover,
$|a|\in rAr$, $a\dg\in rAr$, and $a\dg$ is the carrier of $a$ as calculated
in $rAr$.

The well-known \emph{Peirce decomposition} of $a\in A$ with respect to
$p\in P$, namely
\[
a=pap+pap\sp{\perp}+p\sp{\perp}ap+p\sp{\perp}ap\sp{\perp},
\]
is easily proved by direct calculation using the fact that $p\sp{\perp}
=1-p$. We note that $pap\sp{\perp}$ and $p\sp{\perp}ap$ belong to the
enveloping algebra $R$, but not necessarily to $A$; however $pap,\
pap\sp{\perp}+p\sp{\perp}ap,\  p\sp{\perp}ap\sp{\perp}\in A$.

As suggested by the following example, in our work the Peirce
decomposition will serve as a substitute for the operator matrix
formulas appearing in \cite{Guide, Halmos}.

\begin{example} \label{ex:matrices1}
{\rm To motivate and illustrate our subsequent work, we consider the case
in which ${\mathcal H}$ is a two-dimensional real Hilbert space, $A$ is
the rank 2 synaptic algebra of all self-adjoint linear operators on
${\mathcal H}$, $a\in A$, and $p\in P\setminus\{0,1\}$. Then we can
choose an orthonormal basis for ${\mathcal H}$ such that $a$, $p$, and
$p\sp{\perp}$, are represented by the matrices
\[
a=\left[\begin{array}{lr}\alpha & \gamma\\ \gamma & \beta\end{array}\right],
 \ p=\left[\begin{array}{lr}1 & 0\\ 0  & 0\end{array}\right], \ p\sp{\perp}=
 \left[\begin{array}{lr}0 & 0\\ 0  & 1\end{array}\right],
\]
where $\alpha, \beta, \gamma\in\reals$. Then, in the Peirce decomposition,
\[
pap=\left[\begin{array}{lr}\alpha & 0\\ 0 & 0\end{array}\right],
 \ pap\sp{\perp}=\left[\begin{array}{lr}0 & \gamma\\ 0  & 0\end{array}
 \right], \ p\sp{\perp}ap=\left[\begin{array}{lr}0 & 0\\ \gamma  & 0
 \end{array}\right],\ p\sp{\perp}ap\sp{\perp}=\left[\begin{array}{lr}
 0 & 0\\ 0  & \beta\end{array}\right].
\]}
\end{example}

\begin{definition} \label{df:diag&off-diag}
With the example above in mind, we shall refer to $pap+p\sp{\perp}ap
\sp{\perp}$ as the \emph{diagonal part} and to $pap\sp{\perp}+p\sp{\perp}
ap$ as the \emph{off-diagonal part} of $a\in A$ with respect to $p\in P$.
\end{definition}

\begin{lemma} \label{lm:diag,offdiag,zero}
Let $a\in A$ and $p\in P$. Than{\rm: (i)} If $0\leq a$, then $a=0$ iff
the diagonal part of $a$ with respect to $p$ is zero. {\rm(ii)} $aCp$
iff the off-diagonal part of $a$ with respect to $p$ is zero.
\end{lemma}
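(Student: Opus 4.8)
\textbf{Proof proposal for Lemma \ref{lm:diag,offdiag,zero}.}

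\emph{Part (i).} The plan is to reduce the ``diagonal part is zero'' hypothesis to a carrier computation. Suppose $0\leq a$ and the diagonal part $pap+p\sp{\perp}ap\sp{\perp}$ vanishes; since $pap$ and $p\sp{\perp}ap\sp{\perp}$ are both nonnegative (each is $J\sb{p}a$ or $J\sb{p\sp{\perp}}a$ applied to $a\geq 0$, and quadratic maps are order preserving) and they are orthogonal (their product is $0$ because $pp\sp{\perp}=0$), their sum being $0$ forces $pap=0$ and $p\sp{\perp}ap\sp{\perp}=0$ separately. Now $pap=0$ says $J\sb{p}a=0$, so by Lemma \ref{lm:carofpqp}(i), $(J\sb{p}a)\dg=p\wedge(p\sp{\perp}\vee a\dg)=0$; likewise $p\sp{\perp}\wedge(p\vee a\dg)=0$. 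I would then use the identity $a=a\,a\dg=a\dg a$ together with the Peirce decomposition of $a\dg$ with respect to $p$: the relations just obtained say $p\,a\dg\,p=0$ and $p\sp{\perp}a\dg p\sp{\perp}=0$ (since a projection times $0$ is $0$, or more carefully, since the carriers vanish the corresponding compressions vanish). Writing $a=a\dg a a\dg$ and expanding, every surviving term contains a factor $p a\dg p$ or $p\sp{\perp}a\dg p\sp{\perp}$, so $a=0$. The converse is trivial. The one point needing care is passing from $(pa\dg p)\dg=0$ to $pa\dg p=0$; this is immediate because the carrier of a nonnegative element is $0$ iff the element is $0$ (e.g. $x=x x\dg$). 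Alternatively, and perhaps more cleanly, I would avoid carriers and argue directly: from $0\leq a$ write $a=b\sp{2}$ with $b=a\sp{1/2}\geq 0$; then $pap=0$ gives $(bp)\sp{*}(bp)$-type positivity, i.e. $pap=(pb)(bp)$ is a product of an element with its ``adjoint'' pattern $pb\cdot bp = (pb)(pb)$ reversed --- in the synaptic setting $pap = pb^2p$ and since $pb^2p\geq 0$ equals $0$, and $pb^2p$ has the same carrier as $pbp$-flavored compressions; the carrier route above is the safest, so I will take it.

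\emph{Part (ii).} The forward direction is immediate: if $aCp$ then $ap=pa$, hence $pap\sp{\perp}=pp a p^{\perp}$-type manipulation gives $pap\sp{\perp}=a p p\sp{\perp}=0$ and symmetrically $p\sp{\perp}ap=0$, so the off-diagonal part is zero. For the converse, suppose $pap\sp{\perp}+p\sp{\perp}ap=0$. I would combine this with the Peirce decomposition $a=pap+pap\sp{\perp}+p\sp{\perp}ap+p\sp{\perp}ap\sp{\perp}$ to get $a=pap+p\sp{\perp}ap\sp{\perp}$. Then $ap=pap$ (the $p\sp{\perp}ap\sp{\perp}$ term is killed on the right by $p$) and $pa=pap$ likewise, so $ap=pa$, i.e. $aCp$. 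The only subtlety is that $pap\sp{\perp}$ and $p\sp{\perp}ap$ individually lie in $R$ but not necessarily in $A$, whereas their sum does; however the equation ``off-diagonal part $=0$'' is an equation in $A$ (hence in $R$), and once we know $pap\sp{\perp}=-p\sp{\perp}ap$ in $R$, multiplying on the left by $p$ gives $pap\sp{\perp}=0$ and on the right by $p$ gives $p\sp{\perp}ap=0$, each now a valid computation in $R$. So the argument goes through.

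\emph{Anticipated main obstacle.} The genuinely nontrivial content is Part (i), and within it the step that identifies $pap=0$ with a lattice condition via Lemma \ref{lm:carofpqp}; the rest is Peirce-decomposition bookkeeping. I expect the cleanest write-up routes everything through the carrier lemma rather than through square roots, since carriers convert the positivity hypothesis directly into the vanishing of compressions and thence (using $a=a\dg a a\dg$ and $p a\dg p=0=p\sp{\perp}a\dg p\sp{\perp}$) into $a=0$.
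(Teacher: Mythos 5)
Your part (ii) is essentially the paper's own argument and is correct, and in part (i) you follow the paper's route up to the point where Lemma \ref{lm:carofpqp}(i) converts $pap=p\sp{\perp}ap\sp{\perp}=0$ into $p\wedge(p\sp{\perp}\vee a\dg)=0$ and $p\sp{\perp}\wedge(p\vee a\dg)=0$ (equivalently $pa\dg p=p\sp{\perp}a\dg p\sp{\perp}=0$). But your finishing step fails. Write $a\dg=x+y$ with $x:=pa\dg p\sp{\perp}$ and $y:=p\sp{\perp}a\dg p$ (valid in $R$ since the diagonal compressions of $a\dg$ vanish). Expanding $a=a\dg a a\dg=(x+y)a(x+y)=xax+xay+yax+yay$, the two mixed terms do contain the vanishing factors $p\sp{\perp}ap\sp{\perp}$ and $pap$, but the two remaining terms are $xax=pa\dg\,(p\sp{\perp}ap)\,a\dg p\sp{\perp}$ and $yay=p\sp{\perp}a\dg\,(pap\sp{\perp})\,a\dg p$: their inner factors are the \emph{off-diagonal} compressions of $a$, which at this stage are not known to vanish (showing they vanish is essentially the conclusion $a=0$ itself). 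So the claim that ``every surviving term contains a factor $pa\dg p$ or $p\sp{\perp}a\dg p\sp{\perp}$'' is false, and your part (i) does not close.

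The paper finishes instead inside the orthomodular lattice: from $p\wedge(p\sp{\perp}\vee a\dg)=0$, De\,Morgan gives $1=p\sp{\perp}\vee(p\wedge(a\dg)\sp{\perp})=p\sp{\perp}\oplus(p\wedge(a\dg)\sp{\perp})=p\sp{\perp}\oplus p$, so cancellation yields $p\wedge(a\dg)\sp{\perp}=p$, i.e.\ $a\dg\leq p\sp{\perp}$; symmetrically $a\dg\leq p$, whence $a\dg=0$ and $a=aa\dg=0$. If you want to stay with your intermediate facts $pa\dg p=p\sp{\perp}a\dg p\sp{\perp}=0$, a correct algebraic finish uses idempotence of $a\dg$ rather than $a=a\dg a a\dg$: with $x,y$ as above, $x\sp{2}=y\sp{2}=0$, so $a\dg=(a\dg)\sp{2}=xy+yx$; compressing this identity by $p$ gives $xy=pa\dg p=0$ and by $p\sp{\perp}$ gives $yx=0$, hence $a\dg=x+y=xy+yx=0$ and $a=0$. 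Either repair is short, but as written the last step of your part (i) is a genuine gap.
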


\begin{proof}
(i) Assume that $pap+p\sp{\perp}ap\sp{\perp}=0$. Then, since $0\leq a$,
we have $pap=0$ and $p\sp{\perp}ap\sp{\perp}=0$, so $(pap)\dg=0$
and $(p\sp{\perp}ap\sp{\perp})\dg=0$, and it follows from Lemma
\ref{lm:carofpqp} (i) that $p\wedge(p\sp{\perp}\vee a\dg)=0$ and
$p\sp{\perp}\wedge(p\vee a\dg)=0$. From $p\wedge(p\sp{\perp}\vee a
\dg)=0$, we have $p\sp{\perp}\vee(p\wedge(a\dg)\sp{\perp})=p\sp{\perp}
\oplus(p\wedge(a\dg)\sp{\perp})=1=p\sp{\perp}\oplus p$, whence $p
\wedge(a\dg)\sp{\perp}=p$ by cancellation, and we conclude that
$p\leq(a\dg)\sp{\perp}$, i.e., $a\dg\leq p\sp{\perp}$. Likewise,
from $p\sp{\perp}\wedge(p\vee a\dg)=0$, we deduce that $a\dg\leq p$,
and it follows that $a\dg=0$, and therefore $a=0$. The converse is
obvious.

(ii) Assume that $pap\sp{\perp}+p\sp{\perp}ap=0$. Then $a=pap+p
\sp{\perp}ap\sp{\perp}$, whence $ap=pa=pap$. The converse is obvious.
\end{proof}

\section{Two projections---basics} \label{sc:2projcom}

\begin{assumption} \label{as:pandq}
In what follows, we assume that $p$ and $q$ are arbitrary but fixed
projections in the OML $P$.
\end{assumption}

Naturally, the orthocomplements $p\sp{\perp}$ and $q\sp{\perp}$ will
have important roles to play in our subsequent study of $p$, $q$, and
their mutual interaction.  Accordingly, we shall be focusing our
attention on the four projections
\[
p,\,q,\,p\sp{\perp},\,q\sp{\perp}
\]
and certain ``lattice polynomials" in $p,\,q,\,p\sp{\perp}$,
and $q\sp{\perp}$, i.e., projections constructed from these four
using lattice meet, join, and orthocomplementation in $P$.

In the following definition, we extend the notion of the projection
$r\sb{p}$ in Theorem \ref{th:rpandcommutativity} to the projections
$r\sb{p\sp{\perp}}$, $r\sb{q}$, and $r\sb{q\sp{\perp}}$. The
alternative formulation as a product in each part of the definition
is justified by the fact that the projections in each threefold meet
commute with one another.

\begin{definition} \label{df:rsbp,etc}
\
\begin{enumerate}
\item[(1)] $r\sb{p}:=p\wedge(p\sp{\perp}\vee q)\wedge(p\sp{\perp}\vee
 q\sp{\perp})=p(p\sp{\perp}\vee q)(p\sp{\perp}\vee q\sp{\perp})$.
\item[(2)] $r\sb{p\sp{\perp}}:=p\sp{\perp}\wedge(p\vee q)\wedge(p\vee
 q\sp{\perp})=p\sp{\perp}(p\vee q)(p\vee q\sp{\perp})$.
\item[(3)] $r\sb{q}:=q\wedge(p\vee q\sp{\perp})\wedge(p\sp{\perp}\vee
 q\sp{\perp})=q(p\vee q\sp{\perp})(p\sp{\perp}\vee q\sp{\perp})$.
\item[(4)] $r\sb{q\sp{\perp}}:=q\sp{\perp}\wedge(p\vee q)\wedge(p\sp
 {\perp}\vee q)=q\sp{\perp}(p\vee q)(p\sp{\perp}\vee q)$.
\end{enumerate}
\end{definition}

By  Theorem \ref{th:rpandcommutativity} and symmetry, we have next two
theorems.

\begin{theorem} \label{th:equivcomconds}
The following conditions are mutually equivalent{\rm:}
\begin{enumerate}
\item At least one of the conditions $r\sb{p}=0$, $r\sb{p\sp{\perp}}=0$,
 $r\sb{q}=0$, or $r\sb{q\sp{\perp}}=0$ holds.
\item $r\sb{p}=r\sb{p\sp{\perp}}=r\sb{q}=r\sb{q\sp{\perp}}=0$.
\item At least one of the conditions $pCq$, $pCq\sp{\perp}$, $p\sp{\perp}
 Cq$, or $p\sp{\perp}Cq\sp{\perp}$ holds.
\item $pCq$, $pCq\sp{\perp}$, $p\sp{\perp}Cq$, and $p\sp{\perp}Cq\sp{\perp}$.
\item $[p,q]=0$.
\end{enumerate}
\end{theorem}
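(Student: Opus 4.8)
The plan is to prove Theorem \ref{th:equivcomconds} as a cycle of implications, drawing on Theorem \ref{th:rpandcommutativity}~(iii), which already tells us that $p_r = r_p = 0 \Leftrightarrow pCq$, together with the obvious symmetries obtained by replacing $p$ by $p\sp\perp$ and/or $q$ by $q\sp\perp$. First I would record the four instances of Theorem \ref{th:rpandcommutativity}~(iii) obtained by these substitutions: $r_p = 0 \Leftrightarrow pCq$, $r_{p\sp\perp} = 0 \Leftrightarrow p\sp\perp Cq$, $r_q = 0 \Leftrightarrow qCp \Leftrightarrow pCq$ (using symmetry of the commutation relation), and $r_{q\sp\perp} = 0 \Leftrightarrow q\sp\perp Cp \Leftrightarrow pCq\sp\perp$. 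One must be a little careful here: the definitions of $r_q$ and $r_{q\sp\perp}$ in Definition \ref{df:rsbp,etc} are the $(q,p)$-instances and $(q\sp\perp, p)$-instances of the formula for $p_r$, so Theorem \ref{th:rpandcommutativity}~(iii) applies verbatim after the relabeling.

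Next I would establish the key link (iii)$\Leftrightarrow$(iv), namely that any one of $pCq$, $pCq\sp\perp$, $p\sp\perp Cq$, $p\sp\perp Cq\sp\perp$ implies all four. This is immediate from the basic fact recalled in the text that $xCy \Leftrightarrow xCy\sp\perp$ and $xCy \Leftrightarrow x\sp\perp Cy$ (equivalently, $C(x) = C(x\sp\perp)$): each of the four commutation statements is equivalent to $pCq$. With (iii)$\Leftrightarrow$(iv) in hand, the equivalences (i)$\Leftrightarrow$(iii) and (ii)$\Leftrightarrow$(iv) follow by pairing up the four ``$r$-vanishing'' conditions with the four commutation conditions through the relabeled forms of Theorem \ref{th:rpandcommutativity}~(iii): condition (i) says at least one $r$ vanishes, hence (via the matching equivalence) at least one commutation relation holds, which is (iii); and (ii) says all four $r$'s vanish, which matches all four commutation relations, i.e.\ (iv). Thus (i), (ii), (iii), (iv) are all mutually equivalent.

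It remains to bring in (v), the vanishing of the Marsden commutator $[p,q]$. The cleanest route is to invoke the cited result from \cite{Marsden} that $pCq \Leftrightarrow [p,q] = 0$, giving (iv)$\Rightarrow$(v) and (v)$\Rightarrow$(iii) directly (since (v) gives $pCq$, which is one of the conditions in (iii)). Alternatively, if a self-contained argument is preferred, one can show $r_p \le [p,q]$ and $r_{p\sp\perp}, r_q, r_{q\sp\perp} \le [p,q]$ directly from the definitions: e.g.\ $r_p = p \wedge (p\sp\perp \vee q) \wedge (p\sp\perp \vee q\sp\perp) \le (p\sp\perp\vee q)\wedge(p\sp\perp\vee q\sp\perp)$ and, separately, $r_p \le p \le p\vee q$ and $r_p \le p \le p \vee q\sp\perp$, so $r_p \le (p\vee q)\wedge(p\vee q\sp\perp)\wedge(p\sp\perp\vee q)\wedge(p\sp\perp\vee q\sp\perp) = [p,q]$; symmetrically for the other three. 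Hence $[p,q]=0$ forces all four $r$'s to be $0$, i.e.\ (v)$\Rightarrow$(ii). For the converse, (iv)$\Rightarrow$(v), one uses that when $p C q$ all the joins in Definition \ref{df:commutator} can be rewritten using the distributive law of Theorem \ref{th:distributive}, and a short computation collapses $[p,q]$ to $0$.

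The main obstacle I anticipate is purely bookkeeping: making sure the symmetry substitutions $p \leftrightarrow p\sp\perp$, $q \leftrightarrow q\sp\perp$ are applied consistently so that the four projections $r_p, r_{p\sp\perp}, r_q, r_{q\sp\perp}$ of Definition \ref{df:rsbp,etc} are correctly matched to the four commutation relations of (iii)--(iv), and verifying that each $r$-projection really is the ``$p_r$'' of Theorem \ref{th:rpandcommutativity} for the appropriately relabeled pair. There is no analytic difficulty; everything takes place in the orthomodular lattice $P$, and the only non-trivial external input is the Marsden equivalence $pCq \Leftrightarrow [p,q]=0$, which the excerpt already permits us to cite.
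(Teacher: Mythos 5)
Your proposal is correct and follows essentially the same route as the paper, which disposes of this theorem in one line (``By Theorem~\ref{th:rpandcommutativity} and symmetry''): you spell out the four relabeled instances of Theorem~\ref{th:rpandcommutativity}~(iii), use the standard facts $pCq\Leftrightarrow pCq\sp{\perp}\Leftrightarrow p\sp{\perp}Cq$ to collapse (i)--(iv), and handle (v) via the quoted Marsden equivalence $pCq\Leftrightarrow[p,q]=0$. Your optional self-contained argument for (v) (each $r$-projection lies below $[p,q]$, and distributivity collapses $[p,q]$ to $p\wedge p\sp{\perp}=0$ when $pCq$) is also correct, just more than the paper requires.
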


\begin{theorem} \label{th:rp,etc}
\
\begin{enumerate}
\item $p=(p\wedge q)\oplus(p\wedge q\sp{\perp})\oplus r\sb{p}$.
\item $p\sp{\perp}=(p\sp{\perp}\wedge q)\oplus(p\sp{\perp}\wedge q
 \sp{\perp})\oplus r\sb{p\sp{\perp}}$.
\item $q=(p\wedge q)\oplus(p\sp{\perp}\wedge q)\oplus r\sb{q}$.
\item $q\sp{\perp}=(p\wedge q\sp{\perp})\oplus(p\sp{\perp}\wedge
 q\sp{\perp})\oplus r\sb{q\sp{\perp}}$
\end{enumerate}
\end{theorem}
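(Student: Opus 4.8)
The plan is to deduce Theorem~\ref{th:rp,etc} directly from Theorem~\ref{th:rpandcommutativity}(i) by applying that result, verbatim, to each of the four pairs of projections obtained by swapping $p\leftrightarrow q$ and/or $p\leftrightarrow p\sp{\perp}$, $q\leftrightarrow q\sp{\perp}$. The point is that the definition of $p\sb{r}$ in Theorem~\ref{th:rpandcommutativity} is \emph{symmetric enough} in its two arguments that the four projections $r\sb{p}$, $r\sb{p\sp{\perp}}$, $r\sb{q}$, $r\sb{q\sp{\perp}}$ of Definition~\ref{df:rsbp,etc} are precisely the instances of ``$x\sb{r}$ relative to $y$'' as $(x,y)$ ranges over $(p,q)$, $(p\sp{\perp},q)$, $(q,p)$, $(q\sp{\perp},p)$.

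Concretely, for part~(i) I would simply invoke Theorem~\ref{th:rpandcommutativity}(i) with the roles of $p$ and $q$ as given, recording that $p=((p\wedge q)\vee(p\wedge q\sp{\perp}))\oplus p\sb{r}$ where, by Definition~\ref{df:rsbp,etc}(1), $p\sb{r}$ is exactly $r\sb{p}$; then the last displayed equality of that theorem's statement, $p=(p\wedge q)\oplus(p\wedge q\sp{\perp})\oplus r\sb{p}$, is the desired identity. For part~(ii) I would apply Theorem~\ref{th:rpandcommutativity}(i) to the pair $(p\sp{\perp},q)$ in place of $(p,q)$: the resulting ``$(p\sp{\perp})\sb{r}$'' is $p\sp{\perp}\wedge((p\sp{\perp})\sp{\perp}\vee q)\wedge((p\sp{\perp})\sp{\perp}\vee q\sp{\perp})=p\sp{\perp}\wedge(p\vee q)\wedge(p\vee q\sp{\perp})=r\sb{p\sp{\perp}}$, matching Definition~\ref{df:rsbp,etc}(2), and the theorem gives $p\sp{\perp}=(p\sp{\perp}\wedge q)\oplus(p\sp{\perp}\wedge q\sp{\perp})\oplus r\sb{p\sp{\perp}}$. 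Parts~(iii) and~(iv) are obtained the same way, applying Theorem~\ref{th:rpandcommutativity}(i) to the pairs $(q,p)$ and $(q\sp{\perp},p)$ respectively and matching the output against Definition~\ref{df:rsbp,etc}(3),(4); here I would also note $p\wedge q=q\wedge p$, $p\sp{\perp}\wedge q=q\wedge p\sp{\perp}$, etc., so the meet terms appear in the stated order.

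The only thing that needs care—and what I regard as the (very mild) main obstacle—is the bookkeeping of orthocomplements: one must check, each time, that after the substitution the threefold meet defining the relevant ``$x\sb{r}$'' literally coincides, term by term, with the corresponding item of Definition~\ref{df:rsbp,etc}, using $p\sp{\perp\perp}=p$ and $q\sp{\perp\perp}=q$. I would present this as a short remark (``by Theorem~\ref{th:rpandcommutativity} and the evident symmetry, applying that theorem to each of the pairs $(p,q)$, $(p\sp{\perp},q)$, $(q,p)$, $(q\sp{\perp},p)$ and comparing with Definition~\ref{df:rsbp,etc}'') rather than writing out all four computations, since no new idea is involved; indeed the excerpt already signals this by saying ``By Theorem~\ref{th:rpandcommutativity} and symmetry, we have [the] next two theorems.''
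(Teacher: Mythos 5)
Your proposal is correct and is exactly the paper's argument: the paper dispenses with a written proof by saying ``By Theorem~\ref{th:rpandcommutativity} and symmetry,'' which is precisely your substitution of the pairs $(p,q)$, $(p\sp{\perp},q)$, $(q,p)$, $(q\sp{\perp},p)$ into Theorem~\ref{th:rpandcommutativity}(i) and matching the resulting threefold meets with Definition~\ref{df:rsbp,etc}. Your bookkeeping of the orthocomplements (using $p\sp{\perp\perp}=p$, $q\sp{\perp\perp}=q$) checks out in all four cases, so nothing is missing.
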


\begin{corollary} \label{co:commute}
$pCq$ iff $r\sb{p}=r\sb{q}=0$ iff $r\sb{p}Cr\sb{q}$.
\end{corollary}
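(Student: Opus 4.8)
The plan is to chain together the two characterizations of commutativity already in hand. First I would prove the equivalence $pCq \Leftrightarrow r\sb{p}=r\sb{q}=0$. The forward direction is immediate: if $pCq$, then Theorem \ref{th:equivcomconds} (with condition (iii) holding trivially) gives condition (ii), so in particular $r\sb{p}=r\sb{q}=0$. For the converse, if $r\sb{p}=0$ then by Theorem \ref{th:rpandcommutativity}(iii) (applied directly) we already get $pCq$; alternatively, $r\sb{p}=0$ is condition (i) of Theorem \ref{th:equivcomconds}, which yields condition (iv) and hence $pCq$. So the first ``iff'' is essentially a restatement of earlier results.

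Next I would handle the equivalence $pCq \Leftrightarrow r\sb{p}Cr\sb{q}$. The forward direction: if $pCq$, then by what was just shown $r\sb{p}=0$, and $0$ commutes with everything, so $r\sb{p}Cr\sb{q}$. The reverse direction is the substantive part. Suppose $r\sb{p}Cr\sb{q}$; I want to conclude $pCq$, equivalently (by the first ``iff'') that $r\sb{p}=0$. The key structural observation is that $r\sb{p}\leq p$ while $r\sb{q}\leq q$, and moreover $r\sb{p}\leq p\sp{\perp}\vee q\sp{\perp}$ and $r\sb{q}\leq p\sp{\perp}\vee q\sp{\perp}$ from the defining meets in Definition \ref{df:rsbp,etc}; also $r\sb{p}\leq p\sp{\perp}\vee q$ and $r\sb{q}\leq p\vee q\sp{\perp}$. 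I expect that these containments force $r\sb{p}$ and $r\sb{q}$ to be ``generically positioned'' relative to each other inside the subalgebra they generate, so that the only way they can commute is for one (hence both) to vanish. Concretely, I would try to show $r\sb{p}\wedge r\sb{q}=0$ and $r\sb{p}\vee r\sb{q} \ldots$ no — better, I would compute $r\sb{p}\wedge r\sb{q}$, $r\sb{p}\wedge r\sb{q}\sp{\perp}$ and use that $r\sb{p}Cr\sb{q}$ implies $r\sb{p}=(r\sb{p}\wedge r\sb{q})\vee(r\sb{p}\wedge r\sb{q}\sp{\perp})$.

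For the meet $r\sb{p}\wedge r\sb{q}$: since $r\sb{p}\leq p$ and $r\sb{q}\leq q$, we have $r\sb{p}\wedge r\sb{q}\leq p\wedge q$; but also $r\sb{p}\leq p\wedge(p\sp{\perp}\vee q\sp{\perp}) = p - (p\wedge q)$ by Theorem \ref{th:rpandcommutativity}(ii)-type reasoning, more precisely $r\sb{p}\leq (p\wedge q)\sp{\perp}$ from the Peirce/orthogonality decomposition $p=(p\wedge q)\oplus(p\wedge q\sp{\perp})\oplus r\sb{p}$ in Theorem \ref{th:rp,etc}(i), which shows $r\sb{p}\perp(p\wedge q)$. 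Hence $r\sb{p}\wedge r\sb{q}\leq r\sb{p}\wedge(p\wedge q)=0$. The harder piece is $r\sb{p}\wedge r\sb{q}\sp{\perp}$: I would argue $r\sb{q}\sp{\perp}\geq$ something, or directly that $r\sb{p}\leq (r\sb{q})\sp{\perp}$ fails in general, so I need a genuinely different route. The main obstacle I anticipate is exactly this: showing that $r\sb{p}\wedge r\sb{q}\sp{\perp}=0$ (equivalently $r\sb{p}\leq r\sb{q}$ under the commutativity hypothesis, which combined with $r\sb{p}\wedge r\sb{q}=0$ gives $r\sb{p}=0$). To get it I would invoke the symmetry-exchange machinery: Theorem \ref{th:6.5FPSynap}(ii) with the polar decomposition of $p-q\sp{\perp}$ produces a symmetry $u$ with $u(p\wedge(p\sp{\perp}\vee q))u = q\wedge(p\vee q\sp{\perp})$, and a parallel identity should relate $r\sb{p}$ to $r\sb{q}$ via a symmetry. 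Since a symmetry transformation $J\sb{u}$ is an order automorphism sending $r\sb{p}$ to $r\sb{q}$ (or $r\sb{q}$ to something comparable), and such maps preserve commutativity, the hypothesis $r\sb{p}Cr\sb{q}$ would translate into $r\sb{p}Cr\sb{p}\sp{\perp}$-type triviality forcing $r\sb{p}$ central in its Peirce block — but $r\sb{p}$ by construction has no nonzero subprojection commuting with $q$, so $r\sb{p}=0$. I would lay this out carefully, as the existence and precise form of the symmetry carrying $r\sb{p}$ to $r\sb{q}$ is the crux and is presumably established in the sections referenced after this corollary; if an earlier-cited result already gives ``$r\sb{p}$ and $r\sb{q}$ are exchanged by a symmetry'' then the argument collapses to: conjugation by that symmetry is a synaptic automorphism, $pCq \Leftrightarrow r\sb p = 0 \Leftrightarrow r\sb q = 0$, and $r\sb{p}Cr\sb{q}$ together with $r\sb{p}\wedge r\sb{q}=0$ and the exchange symmetry forces $r\sb{p}=r\sb{q}=0$.
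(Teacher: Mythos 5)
Your first equivalence ($pCq\Leftrightarrow r\sb{p}=r\sb{q}=0$), the easy implication $r\sb{p}=r\sb{q}=0\Rightarrow r\sb{p}Cr\sb{q}$, and the computation $r\sb{p}\wedge r\sb{q}=0$ are all fine. The gap is in the crux direction $r\sb{p}Cr\sb{q}\Rightarrow pCq$: the mechanism you propose does not establish it. Conjugating the relation $r\sb{p}Cr\sb{q}$ by a symmetry exchanging $r\sb{p}$ and $r\sb{q}$ just returns the same relation, not any ``$r\sb{p}Cr\sb{p}\sp{\perp}$-type triviality,'' and the final claim that ``$r\sb{p}Cr\sb{q}$ together with $r\sb{p}\wedge r\sb{q}=0$ and the exchange symmetry forces $r\sb{p}=r\sb{q}=0$'' is false as a general principle: commuting with meet zero means orthogonal, and two nonzero orthogonal projections are routinely exchanged by a symmetry (swap two orthogonal blocks). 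Likewise, the fact you lean on --- that no nonzero subprojection of $r\sb{p}$ commutes with $q$ --- is true but unproven in your sketch, and even granting it you never produce from the hypothesis a nonzero subprojection of $r\sb{p}$ commuting with $q$; that production is exactly the missing step.

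The missing step is much simpler than the symmetry machinery, and you already have the ingredients. By Theorem \ref{th:rp,etc}, $p=(p\wedge q)\oplus(p\wedge q\sp{\perp})\oplus r\sb{p}$ and $q=(p\wedge q)\oplus(p\sp{\perp}\wedge q)\oplus r\sb{q}$. Every cross pair of summands except $(r\sb{p},r\sb{q})$ is orthogonal (e.g.\ $r\sb{p}\leq p\sp{\perp}\vee q\sp{\perp}=(p\wedge q)\sp{\perp}$ and $r\sb{p}\leq p\leq(p\sp{\perp}\wedge q)\sp{\perp}$), hence commutes, and $(r\sb{p},r\sb{q})$ commute by hypothesis; summing, $pCq$. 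This is the paper's proof. Alternatively, to finish along your own line: the decomposition of $q$ gives $r\sb{p}Cq$ directly (its first two summands are orthogonal to $r\sb{p}$, the third commutes by hypothesis), and then $r\sb{p}=(r\sb{p}\wedge q)\vee(r\sb{p}\wedge q\sp{\perp})=0$ because $r\sb{p}\wedge q\leq r\sb{p}\wedge(p\wedge q)=0$ and $r\sb{p}\wedge q\sp{\perp}\leq r\sb{p}\wedge(p\wedge q\sp{\perp})=0$, using $r\sb{p}\leq(p\wedge q)\sp{\perp}$ and $r\sb{p}\leq p\sp{\perp}\vee q=(p\wedge q\sp{\perp})\sp{\perp}$; this also proves the subprojection claim you asserted. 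The appeal to Theorem \ref{th:6.5FPSynap} and exchange symmetries (which in full generality for $r\sb{p},r\sb{q}$ is only developed later, in the commutator-algebra sections) is both unnecessary and, as deployed, logically unsound.
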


\begin{proof}
That $pCq\Rightarrow r\sb{p}=r\sb{q}=0$ follows from Theorem
\ref{th:equivcomconds} and obviously $r\sb{p}=r\sb{q}=0\Rightarrow
r\sb{p}Cr\sb{q}$. Suppose that $r\sb{p}Cr\sb{q}$. Then in parts
(i) and (iii) of Theorem \ref{th:rp,etc}, every summand in the
orthogonal decomposition of $p$ commutes with every summand in
the orthogonal decomposition of $q$, whence $pCq$.
\end{proof}

By parts (ii) and (iii) of the following theorem, the unit element $1
\in A$ is the orthosum, hence also the supremum, (in two different
ways) of six pairwise orthogonal projections determined by $p$ and $q$.

\begin{theorem} \label{th:unitisorthosum}
\
\begin{enumerate}
\item $r\sb{p}\perp r\sb{p\sp{\perp}}$ and $r\sb{q}\perp r\sb
 {q\sp{\perp}}$.
\item $1=(p\wedge q)\oplus(p\wedge q\sp{\perp})\oplus(p\sp{\perp}\wedge q)
 \oplus(p\sp{\perp}\wedge q\sp{\perp})\oplus r\sb{p}\oplus r\sb{p\sp{\perp}}
 =[p,q]\sp{\perp}\oplus r\sb{p}\oplus r\sb{p\sp{\perp}}$.
\item $1=(p\wedge q)\oplus(p\wedge q\sp{\perp})\oplus(p\sp{\perp}\wedge q)
 \oplus(p\sp{\perp}\wedge q\sp{\perp})\oplus r\sb{q}\oplus r\sb{q\sp{\perp}}
 =[p,q]\sp{\perp}\oplus r\sb{q}\oplus r\sb{q\sp{\perp}}$.
\item $r\sb{p}\oplus r\sb{p\sp{\perp}}=r\sb{q}\oplus r\sb{q\sp{\perp}}=[p,q]$.
\end{enumerate}
\end{theorem}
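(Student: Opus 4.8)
The plan is to build everything from Theorem \ref{th:rp,etc} together with the carrier/lattice facts collected in Section \ref{sc:OMLP}. For part (i), I would first establish the orthogonality $r\sb p\perp r\sb{p\sp\perp}$. Since $r\sb p\leq p$ and $r\sb{p\sp\perp}\leq p\sp\perp$, and $p\perp p\sp\perp$, the transitivity of orthogonality in the OML $P$ gives $r\sb p\perp r\sb{p\sp\perp}$ immediately; the same argument with $q$ in place of $p$ yields $r\sb q\perp r\sb{q\sp\perp}$. (Here I use the basic fact that $a\leq b$ and $b\perp c$ imply $a\perp c$, which is immediate from $a\leq b\leq c\sp\perp$.)

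For part (ii), I would start from Theorem \ref{th:rp,etc}(i) and (ii), which express $p$ and $p\sp\perp$ as orthosums of three pairwise orthogonal projections each, and add them. Since $p\oplus p\sp\perp=1$, summing the two decompositions gives $1$ as the orthosum of the six projections $p\wedge q$, $p\wedge q\sp\perp$, $p\sp\perp\wedge q$, $p\sp\perp\wedge q\sp\perp$, $r\sb p$, $r\sb{p\sp\perp}$—provided I check that all six are pairwise orthogonal. The orthogonalities among the four meet-projections are classical (e.g., $p\wedge q\leq p$ and $p\sp\perp\wedge q\sp\perp\leq p\sp\perp$, so those two are orthogonal; $p\wedge q\leq q$ and $p\wedge q\sp\perp\leq q\sp\perp$, so those two are orthogonal; etc.), and orthogonality of each meet-projection against $r\sb p$ or $r\sb{p\sp\perp}$ follows because within the decomposition of $p$ (resp.\ $p\sp\perp$) the summands are mutually orthogonal by Theorem \ref{th:rp,etc}, while across $p$ and $p\sp\perp$ orthogonality is automatic. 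The fact that an orthosum of pairwise orthogonal projections is their supremum, and equals their sum, is recorded in Section \ref{sc:OMLP}. The second equality in (ii), with $[p,q]\sp\perp$, then reduces to showing
\[
[p,q]\sp\perp=(p\wedge q)\oplus(p\wedge q\sp\perp)\oplus(p\sp\perp\wedge q)\oplus(p\sp\perp\wedge q\sp\perp).
\]
By Definition \ref{df:commutator} and the De\,Morgan laws, $[p,q]\sp\perp=(p\sp\perp\wedge q\sp\perp)\vee(p\sp\perp\wedge q)\vee(p\wedge q\sp\perp)\vee(p\wedge q)$, so I just need that this join is in fact an orthosum, i.e.\ that the four terms are pairwise orthogonal—which was already verified above. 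Part (iii) is proved identically, interchanging the roles of $p$ and $q$ (i.e.\ starting from Theorem \ref{th:rp,etc}(iii) and (iv)).

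Finally, part (iv) follows by cancellation in the orthomodular lattice. Comparing the two expressions for $1$ in (ii) and (iii): both contain the common orthosummand $[p,q]\sp\perp=(p\wedge q)\oplus(p\wedge q\sp\perp)\oplus(p\sp\perp\wedge q)\oplus(p\sp\perp\wedge q\sp\perp)$, so cancelling it gives $r\sb p\oplus r\sb{p\sp\perp}=r\sb q\oplus r\sb{q\sp\perp}$. To identify this common value with $[p,q]$, note from (ii) that $1=[p,q]\sp\perp\oplus(r\sb p\oplus r\sb{p\sp\perp})$, while also $1=[p,q]\sp\perp\oplus[p,q]$; cancelling $[p,q]\sp\perp$ yields $r\sb p\oplus r\sb{p\sp\perp}=[p,q]$, and symmetrically for $r\sb q\oplus r\sb{q\sp\perp}$. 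I expect the only genuine care needed is the bookkeeping of pairwise orthogonality among the six projections in (ii) and (iii)—everything else is either transitivity of $\perp$, the De\,Morgan identity for $[p,q]\sp\perp$, or cancellation, all of which are available from Section \ref{sc:OMLP} and Theorem \ref{th:rp,etc}.
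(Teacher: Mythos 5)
Your proposal is correct and follows essentially the same route as the paper: part (i) from $r\sb{p}\leq p$, $r\sb{p\sp{\perp}}\leq p\sp{\perp}$ (and similarly for $q$), parts (ii) and (iii) by adding the decompositions of Theorem \ref{th:rp,etc} via $1=p\oplus p\sp{\perp}$ (resp.\ $1=q\oplus q\sp{\perp}$) together with Definition \ref{df:commutator} and De\,Morgan, and part (iv) by comparing (ii) and (iii). Your explicit bookkeeping of the pairwise orthogonalities and the cancellation step merely fills in details the paper leaves implicit.
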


\begin{proof}

Part (i) follows from obvious facts that $r\sb{p}\leq p$, $r\sb{p\sp{\perp}}
\leq p\sp{\perp}$, $r\sb{q}\leq q$, and $r\sb{q\sp{\perp}}\leq q\sp{\perp}$.

Part (ii) is a consequence of parts (i) and (ii) of Theorem \ref{th:rp,etc},
the fact that $1=p\oplus p\sp{\perp}$, and Definition \ref{df:commutator}.
Likewise, part (iii) follows from parts (iii) and (iv) of Theorem \ref{th:rp,etc}
and $1=q\oplus q\sp{\perp}$.

Part (iv) follows from (ii) and (iii).
\end{proof}

In the sixfold orthogonal decompositions of the unit $1$ in parts (ii)
and (iii) of Theorem \ref{th:unitisorthosum}, we are inclined to agree
with Halmos \cite[p. 381]{Halmos} that the first four projections
$p\wedge q$, $p\wedge q\sp{\perp}$, $p\sp{\perp}\wedge q$, and $p
\sp{\perp}\wedge q\sp{\perp}$ are ``thoroughly uninteresting." What is
interesting, is what Halmos refers to as ``the rest," namely the
projections $r\sb{p}\oplus r\sb{p\sp{\perp}}$ and $r\sb{q}\oplus r
\sb{q\sp{\perp}}$. Accordingly, in what follows, we pay special attention
to the projections $r\sb{p}$, $r\sb{p\sp{\perp}}$, $r\sb{q}$, $r\sb
{q\sp{\perp}}$, and $r\sb{p}\oplus r\sb{p\sp{\perp}}=r\sb{q}\oplus
r\sb{q\sp{\perp}}=[p,q]$.

\begin{definition} \label{df:r}
\
\begin{enumerate}
\item[(1)] $r:=r\sb{p}\oplus r\sb{p\sp{\perp}}=r\sb{q}\oplus r\sb{q\sp{\perp}}
 =r\sb{p} \vee r\sb{p\sp{\perp}}=r\sb{q}\vee r\sb{q\sp{\perp}}=[p,q]$.
\item[(2)] We call the synaptic algebra $rAr$ the \emph{commutator algebra}
 of $p$ and $q$.
\end{enumerate}
\end{definition}

\begin{theorem} \label{th:p,q,andr}
\
\begin{enumerate}
\item $pCq$ iff $r=0$.
\item $pr\sb{p}=r\sb{p}p=r\sb{p}$, $pr\sb{p\sp{\perp}}=r\sb{p\sp{\perp}}
 p=0$, and $pr=rp=p\wedge r=r\sb{p}$.
\item $p\sp{\perp}r\sb{p}=r\sb{p}p\sp{\perp}=0$, $p\sp{\perp}r\sb{p\sp
 {\perp}}=r\sb{p\sp{\perp}}p\sp{\perp}=r\sb{p\sp{\perp}}$, and $p\sp{\perp}r
 =rp\sp{\perp}=p\sp{\perp}\wedge r=r\sb{p\sp{\perp}}$.
\item $qr\sb{q}=r\sb{q}q=r\sb{q}$, $qr\sb{q\sp{\perp}}=r\sb{q\sp{\perp}}q
 =0$, and $qr=rq=q\wedge r=r\sb{q}$.
\item $q\sp{\perp}r\sb{q}=r\sb{q}q\sp{\perp}=0$, $q\sp{\perp}r\sb{q\sp{\perp}}
 =r\sb{q\sp{\perp}}q\sp{\perp}=r\sb{q\sp{\perp}}$, and $q\sp{\perp}r=rq
 \sp{\perp}=q\sp{\perp}\wedge r=r\sb{q\sp{\perp}}$.
\item $p,p\sp{\perp},q,$ and $q\sp{\perp}$ commute with $r=r\sb{p}\oplus r\sb
 {p\sp{\perp}}=r\sb{q}\oplus r\sb{q\sp{\perp}}=[p,q]$.
\item $r\sb{p}$, $r\sb{p\sp{\perp}}$, $r\sb{p}\!\sp{\perp}$, $r\sb{q}$,
 $r\sb{q\sp{\perp}}$,and $r\sb{q}\!\sp{\perp}$ commute with $r$.
 \item $r\sb{p}=p\wedge r=pr=rp$, $r\sb{p\sp{\perp}}=p\sp{\perp}\wedge r=
 p\sp{\perp}r=rp\sp{\perp}$, $r\sb{q}=q\wedge r=qr=rq$, and $r\sb{q\sp{\perp}}
 =q\sp{\perp}\wedge r=q\sp{\perp}r=rq\sp{\perp}$.
\end{enumerate}
\end{theorem}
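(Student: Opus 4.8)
The plan is to establish the eight parts of Theorem~\ref{th:p,q,andr} essentially in the order listed, bootstrapping from the orthogonal decompositions in Theorems~\ref{th:rp,etc} and~\ref{th:unitisorthosum} together with the elementary facts about commutativity and orthosums recorded in Section~\ref{sc:OMLP}. First I would dispose of (i): by Corollary~\ref{co:commute} (or directly by Theorem~\ref{th:equivcomconds}), $pCq$ iff $r\sb{p}=r\sb{q}=0$, and since $r=r\sb{p}\oplus r\sb{p\sp{\perp}}=r\sb{q}\oplus r\sb{q\sp{\perp}}=[p,q]$ with all six summands pairwise orthogonal (Theorem~\ref{th:unitisorthosum}), $r=0$ forces each of $r\sb{p},r\sb{p\sp{\perp}},r\sb{q},r\sb{q\sp{\perp}}$ to vanish and conversely; then apply Theorem~\ref{th:equivcomconds} again.

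Next I would prove (ii). From Definition~\ref{df:rsbp,etc}(1) we have $r\sb{p}=p(p\sp{\perp}\vee q)(p\sp{\perp}\vee q\sp{\perp})\leq p$, hence $pr\sb{p}=r\sb{p}p=r\sb{p}$ by the standard fact that $x\leq p\Leftrightarrow x=xp=px$. For the orthogonality statement $pr\sb{p\sp{\perp}}=r\sb{p\sp{\perp}}p=0$: since $r\sb{p\sp{\perp}}\leq p\sp{\perp}$, we get $r\sb{p\sp{\perp}}\perp p$, and for projections $p\perp x\Leftrightarrow pCx$ with $px=xp=0$. For the third clause, decompose $r=r\sb{p}\oplus r\sb{p\sp{\perp}}$ (Definition~\ref{df:r}); since $p$ commutes with both $r\sb{p}$ and $r\sb{p\sp{\perp}}$ (just shown: $pr\sb{p}=r\sb{p}$ and $pr\sb{p\sp{\perp}}=0$ both exhibit commuting projections), $p$ commutes with their join $r$, so $pr=rp=p\wedge r$; and $pr=p(r\sb{p}+r\sb{p\sp{\perp}})=r\sb{p}+0=r\sb{p}$. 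Parts (iii), (iv), (v) follow by the same argument with the obvious substitutions ($p\mapsto p\sp{\perp}$, or $p\mapsto q$, using $r=r\sb{q}\oplus r\sb{q\sp{\perp}}$ for the last two), and here I would simply write ``by symmetry'' or ``by an identical computation,'' since the paper has already invoked symmetry for Theorems~\ref{th:equivcomconds} and~\ref{th:rp,etc}.

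For (vi): each of $p,p\sp{\perp},q,q\sp{\perp}$ commutes with $r$ — for $p$ and $p\sp{\perp}$ this is immediate from (ii) and (iii) (having $pr=rp$ and $p\sp{\perp}r=rp\sp{\perp}$), and for $q,q\sp{\perp}$ from (iv) and (v). For (vii): $r\sb{p}=p\wedge r\leq r$ gives $r\sb{p}Cr$ trivially (a projection below $r$ commutes with $r$), hence also $r\sb{p}\!\sp{\perp}Cr$ since $xCr\Leftrightarrow x\sp{\perp}Cr$; likewise for $r\sb{p\sp{\perp}},r\sb{q},r\sb{q\sp{\perp}}$ and their orthocomplements. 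Finally (viii) is just a restatement collecting the last clauses of (ii)–(v). I do not anticipate a genuine obstacle here; the one point needing a little care is the passage from ``$p$ commutes with $r\sb{p}$ and with $r\sb{p\sp{\perp}}$'' to ``$p$ commutes with $r=r\sb{p}\vee r\sb{p\sp{\perp}}$,'' which uses the cited fact that if $pCq$ and $pCr$ then $pC(q\vee r)$ — I would flag this explicitly. Everything else is bookkeeping with the Peirce-type identities and the orthosum calculus already set up in Section~\ref{sc:OMLP}.
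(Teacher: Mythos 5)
Your proposal is correct and follows essentially the same route as the paper: (i) via Theorem \ref{th:equivcomconds}, parts (ii)--(v) from $r\sb{p}\leq p$, $r\sb{p\sp{\perp}}\leq p\sp{\perp}$ and the orthosum $r=r\sb{p}\oplus r\sb{p\sp{\perp}}$ (and its $q$-analogue), with (vi)--(vii) as immediate consequences. The only cosmetic difference is that the paper reproves (viii) directly from the fourfold-meet formula $r=[p,q]$, whereas you observe, correctly, that (viii) merely restates the final clauses of (ii)--(v), which your argument for (ii) already establishes.
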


\begin{proof}
By Definition \ref{df:r} and Theorem \ref{th:equivcomconds} (v), we have
(i). Part (ii) follows from the facts that $r\sb{p}\leq p$, $r\sb{p\sp{\perp}}
\leq p\sp{\perp}$, and $r=r\sb{p}+r\sb{p\sp{\perp}}$. Similar arguments
prove (iii), (iv), and (v). Parts (vi) and (vii) follow from (ii)--(v).

Since $r=(p\vee q)\wedge(p\vee q\sp{\perp})\wedge(p\sp{\perp}\vee q)
\wedge(p\sp{\perp}\vee q\sp{\perp})$, it follows that $p\wedge r=p\wedge
(p\sp{\perp}\vee q)\wedge(p\sp{\perp}\vee q\sp{\perp})=r\sb{p}$, and
$p\wedge r=pr=rp$ because $pCr$. The remaining equalities in (viii) are
proved similarly.
\end{proof}

\section{Sine and cosine effect elements} \label{sc:sinecosine}

\begin{example} \label{ex:matrices2}
{\rm Again we consider the rank 2 synaptic algebra in Example \ref
{ex:matrices1}, this time having a look at the situation of present
interest in which $p,q\in P$. Assuming that $p,q\not=0,1$, and
$p\not=q,q\sp{\perp}$, we can choose an orthonormal basis for
${\mathcal H}$ such that $p$ diagonalizes and $p$, $q$, $p
\sp{\perp}$, and $q\sp{\perp}$ are represented by the matrices
\[
p=\left[\begin{array}{lr}1 & 0\\ 0 & 0\end{array}\right],\ q=\left
 [\begin{array}{lr}\cos\sp{2}\theta & \cos\,\theta \sin\,\theta\\
 \cos\,\theta \sin\,\theta  & \sin\sp{2}\theta\end{array}\right],
\]
\[
p\sp{\perp}=\left[\begin{array}{lr}0 & 0\\ 0 & 1\end{array}\right],\
q\sp{\perp}=\left[\begin{array}{lr}\sin\sp{2}\theta & -\cos\,\theta \sin
\,\theta\\ -\cos\,\theta \sin\,\theta & \cos\sp{2}\theta\end{array}\right],
\]
where $0<\theta<\frac{\pi}{2}$ is the positive acute angle between the
one-dimensional subspaces upon which $p$ and $q$ project (see \cite
[p. 384]{Halmos}).}
\end{example}

In Example \ref{ex:matrices2}, we have
\[
pqp+p\sp{\perp}q\sp{\perp}p\sp{\perp}=(\cos\sp{2}\theta)I\text{\ and\ }
 pq\sp{\perp}p+p\sp{\perp}qp\sp{\perp}=(\sin\sp{2}\theta)I,
\]
respectively, where $I$ is the identity matrix. This suggests the following
definition.

\begin{definition} \label{df:cossineffects}
As $0\leq q,\,q\sp{\perp}$, we have $0\leq pqp,\, p\sp{\perp}q\sp{\perp}
p\sp{\perp},\, pq\sp{\perp}p,\, p\sp{\perp}qp\sp{\perp}$, so $0\leq pqp+
p\sp{\perp}q\sp{\perp}p\sp{\perp}$ and $0\leq pq\sp{\perp}p+p\sp{\perp}
qp\sp{\perp}$, whence we define
\[
 c:=(pqp+p\sp{\perp}q\sp{\perp}p\sp{\perp})\sp{1/2}\text{\ and\ }
 s:=(pq\sp{\perp}p+p\sp{\perp}qp\sp{\perp})\sp{1/2}.
\]
We refer to $c$ as the \emph{cosine effect} and to $s$ as the
\emph{sine effect} for the projection $q$ with respect to $p$.
\end{definition}

\noindent Recall that an element $e\in A$ is called an \emph{effect}
iff $0\leq e\leq 1$. In part (vi) of the next theorem, we show that $c$
and $s$ are, in fact, effects in $A$.

\begin{theorem} \label{th:sceffects}
\
\begin{enumerate}
\item $c\sp{2}=pqp+p\sp{\perp}q\sp{\perp}p\sp{\perp}=1-(p-q)\sp{2}
 =(p-q\sp{\perp})\sp{2}=(p+q-1)\sp{2}$.
\item $s\sp{2}=pq\sp{\perp}p+p\sp{\perp}qp\sp{\perp}=(p-q)\sp{2}$.
\item $pc\sp{2}=pqp=c\sp{2}p,\ qc\sp{2}=qpq=c\sp{2}q,\ ps\sp{2}=
 pq\sp{\perp}p=s\sp{2}p,$\newline $qs\sp{2}=qp\sp{\perp}q=s\sp{2}q$,
 and $s\sp{2}p\sp{\perp}=p\sp{\perp}qp\sp{\perp}=p\sp{\perp}s\sp{2}$.
\item $c=|p-q\sp{\perp}|$ and $s=|p-q|$.
\item $c\sp{2}+s\sp{2}=1$.
\item $0\leq c\sp{2},\,s\sp{2},\,c,\,s\leq 1$. Also, $c\sp{2}\leq c$, and
 $s\sp{2}\leq s$.
\item $C(c)=C(c\sp{2})=C(s\sp{2})=C(s)$.
\item $cCp$, $cCq$, $cCr$, $sCp$, $sCq$, $sCr$, and  $cCs$.
\item $C(p)\cap C(q)\subseteq C(c)=C(s)$.
\end{enumerate}
\end{theorem}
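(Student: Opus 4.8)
The plan is to establish the nine parts in an order that lets each one feed the next, starting from the defining formulas $c^2 = pqp + p^{\perp}q^{\perp}p^{\perp}$ and $s^2 = pq^{\perp}p + p^{\perp}qp^{\perp}$ of Definition~\ref{df:cossineffects}. For part (i) I would expand $(p-q^{\perp})^2 = (p+q-1)^2 = p + q - pq - qp + \text{(lower-order)}$ by direct computation in the enveloping algebra $R$, using $p^2=p$, $q^2=q$, and reorganize via the Peirce decomposition of $q$ with respect to $p$: since $q = pqp + (pqp^{\perp}+p^{\perp}qp) + p^{\perp}qp^{\perp}$ and similarly for $q^{\perp}=1-q$, the off-diagonal parts cancel in $(p-q^{\perp})^2$, leaving exactly $pqp + p^{\perp}q^{\perp}p^{\perp}$; the identity $1-(p-q)^2$ then follows since $(p-q)^2 + (p-q^{\perp})^2 = (p-q)^2 + (p+q-1)^2$ simplifies to $1$ by expansion. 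Part (ii) is the companion computation $(p-q)^2 = pq^{\perp}p + p^{\perp}qp^{\perp}$, again killing off-diagonal terms; part (v), $c^2+s^2=1$, is then immediate by adding (i) and (ii), or directly from $(p-q)^2+(p-q^{\perp})^2=1$. Part (iv), $c=|p-q^{\perp}|$ and $s=|p-q|$, follows from (i), (ii) and the uniqueness of positive square roots, since $|b|:=(b^2)^{1/2}$ by definition and $c,s\ge 0$.

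For part (iii) I would multiply the formula for $c^2$ in (i) on the left and right by $p$: since $p\cdot p^{\perp}q^{\perp}p^{\perp}=0$, we get $pc^2 = pqp = c^2 p$ (using $c^2 p = (pqp)p + 0 = pqp$ as $pqp\in pAp$), and multiplying by $q$ uses $qpq\in A$ together with $q(p^{\perp}q^{\perp}p^{\perp})q = 0$—wait, one must be slightly careful: $q c^2 = q(pqp) + q p^{\perp}q^{\perp}p^{\perp}$, and the second term need not vanish term-by-term, so instead I would use $c^2 = 1-(p-q)^2$, giving $qc^2 = q - q(p-q)^2$ and $c^2 q = q - (p-q)^2 q$; since $p-q$ commutes with $q$? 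No—rather, use $q(p-q)^2 = q(p-q)(p-q)$ and note $(p-q)q = pq - q$, $q(p-q) = qp - q$, so $q(p-q)^2 q = (qp-q)(pq-q) = qpq - qpq - qq + q$ wait this is getting delicate; the cleanest route is: $qpq$ is the diagonal $p$-part irrelevance aside, simply verify $qc^2 = qpq$ by writing $qc^2q' $ etc. I will instead deduce $qc^2=c^2q=qpq$ from Theorem~\ref{th:6.5FPSynap}(i), which gives $upqpu=qpq$ for the symmetry $u$ from the polar decomposition of $p-q^{\perp}$, combined with $c^2 p = pqp$ already shown and the fact that $c=|p-q^{\perp}|\in CC(p-q^{\perp})$. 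The sine analogues follow by the substitution $q\mapsto q^{\perp}$ (which swaps $c\leftrightarrow s$ by (i),(ii),(iv)) and $p\mapsto p^{\perp}$ where needed.

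Parts (vi)--(ix) are then soft consequences. For (vi): $0\le c^2,s^2$ by construction, $c^2+s^2=1$ forces $c^2\le 1$ and $s^2\le 1$, hence $c^2,s^2\in E$; then $c=(c^2)^{1/2}$ and since $0\le c^2\le 1$ we get $c^2\le c\le 1$ (as $c^2\le 1$ gives $c\le 1$ by the square-root being order preserving on $[0,1]$, and $c^2 = c\cdot c \le 1\cdot c = c$ using $c\le 1$ and $c\ge 0$). For (vii): $C(c)=C(c^2)$ because $c\in CC(c^2)$ (square root double-commutes) and conversely $c^2=c\cdot c$ shows anything commuting with $c$ commutes with $c^2$; then $C(c^2)=C(s^2)$ since $s^2=1-c^2$; and $C(s^2)=C(s)$ likewise. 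For (viii): $cCp$ since $c\in CC(p-q^{\perp})$ and $pC(p-q^{\perp})$ (because $p(p-q^{\perp})=p-pq^{\perp}$ and $(p-q^{\perp})p = p - q^{\perp}p$, and from (iii) $pc^2=c^2p$... more directly, $p$ commutes with $c^2$ by (iii) hence with its square root $c$); $cCq$ from $qc^2=c^2q$ in (iii) and square-root double-commutativity; $cCr$ since $r=[p,q]$ is a lattice polynomial in $p,q,p^{\perp},q^{\perp}$ and $c$ commutes with all four, so $c\in C(p)\cap C(q)$ and standard OML facts give $cCr$; $sCp,sCq,sCr$ by the same arguments with $s$; and $cCs$ because $s=(1-c^2)^{1/2}\in CC(c^2)$ while $c\in C(c^2)$. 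For (ix): if $b\in C(p)\cap C(q)$ then $b$ commutes with $pqp+p^{\perp}q^{\perp}p^{\perp}=c^2$, hence with $c=(c^2)^{1/2}$; so $b\in C(c)$, and $C(c)=C(s)$ by (vii).

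\medskip
\noindent\textbf{Main obstacle.} The genuine work is concentrated in part (iii)—specifically the identities $qc^2=qpq=c^2q$ and the sine analogues. The left-right multiplication trick handles the $p$-conjugation cleanly, but the $q$-conjugation mixes both Peirce summands of $c^2$ and does not collapse term-by-term; bridging it requires either a careful bare-hands expansion of $q(p-q)^2q$ and $(p-q)^2 q(p-q)^2$-type products, or invoking Theorem~\ref{th:6.5FPSynap}(i) ($upqpu=qpq$) and tracking the symmetry $u$ from the polar decomposition of $p-q^{\perp}$. I expect the symmetry-transformation route to be shortest, since it converts the algebraic identity into a statement about the order automorphism $J_u$, but it demands care in matching $u$ (which double-commutes with $p-q^{\perp}$, hence with $c=|p-q^{\perp}|$) against the projections $p$ and $q$. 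Everything downstream of (iii) is routine bookkeeping with carriers, square roots, and OML commutativity.
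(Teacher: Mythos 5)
Most of your proposal coincides with the paper's proof: (i) and (ii) by direct expansion, (iv) from uniqueness of positive square roots, (v) by adding (i) and (ii), (vi) from the cited order facts in \cite{FSynap}, (vii) via $C(c)=C(c^{2})=C(1-s^{2})=C(s^{2})=C(s)$, (viii) from (iii) together with (vii), and (ix) essentially as in the paper (your variant through $c^{2}$ instead of $|p-q^{\perp}|$ is equally fine). The genuine problem sits exactly where you place your ``main obstacle,'' part (iii): as written you do not prove $qc^{2}=qpq=c^{2}q$ (nor $qs^{2}=qp^{\perp}q=s^{2}q$). You abandon the bare-hands expansion midway (and the fragment you do compute, $q(p-q)^{2}q$, is both the wrong quantity---you need the one-sided products $q(p-q)^{2}$ and $(p-q)^{2}q$---and miscalculated), and the substitute you propose, Theorem \ref{th:6.5FPSynap}(i) combined with $c^{2}p=pqp$ and $u\in CC(p-q^{\perp})$, does not close the argument: it yields $qpq=u(c^{2}p)u=c^{2}\,upu$, and to conclude $c^{2}\,upu=c^{2}q$ you would need $c^{2}(upu-q)=0$, which is not available at this stage. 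Indeed $upu\neq q$ in general, since $p$ and $q$ are not assumed to be in generic position here; Theorem \ref{th:6.5FPSynap}(ii) only says that $u$ exchanges $p\wedge(p^{\perp}\vee q)$ with $q\wedge(p\vee q^{\perp})$.

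The irony is that the obstacle is illusory, and the paper's proof of (iii) is precisely the direct computation you started: writing $c^{2}=1-p+pq+qp-q$ and $s^{2}=p-pq-qp+q$ (from (i) and (ii)), one gets at once
\[
qc^{2}=q(1-p+pq+qp-q)=q-qp+qpq+qp-q=qpq,
\]
and symmetrically $c^{2}q=qpq$; the remaining identities of (iii) follow the same way (all the expressions involved are manifestly in $A$, so computing in the enveloping algebra is harmless). The repair is therefore a one-line fix, but as submitted your proposal leaves the $q$-conjugation identities of (iii)---on which (viii), and through it the equality $q\in C(c)$ used downstream, depends---unproven.
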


\begin{proof}
By direct calculation using Definition \ref{df:cossineffects} and the facts
that $p\sp{\perp}=1-p$ and $q\sp{\perp}=1-q$, we have $c\sp{2}=pqp+p\sp{\perp}
q\sp{\perp}p\sp{\perp}=1-p+pq+qp-q=1-(p-q)\sp{2}$. Also, $(p-q\sp{\perp})
\sp{2}=(p+q-1)\sp{2}=1-p+pq+qp-q$, and (i) follows. Similarly, $s\sp{2}=pq
\sp{\perp}p+p\sp{\perp}qp\sp{\perp}=p-pq-qp+q=(p-q)\sp{2}$, proving (ii).

Part (iii) follows by direct calculation using the facts that $c\sp{2}
=pqp+p\sp{\perp}q\sp{\perp}p\sp{\perp}=1-p+pq+qp-q$ and $s\sp{2}=pq
\sp{\perp}p+p\sp{\perp}qp\sp{\perp}=p-pq-qp+q$.

Part (iv) follows from (i), (ii), and the facts that $0\leq c,s$.

Part (v) follows from $c\sp{2}=1-(q-p)\sp{2}=1-s\sp{2}$. Obviously,
$0\leq c\sp{2},\, s\sp{2},\, c,\, s$.  Also, $c\sp{2}, s\sp{2}\leq
c\sp{2}+s\sp{2}=1$. By \cite[Corollary 3.4]{FSynap}, the facts
that $0\leq s$, $0\leq 1$, $sC1$, and $s\sp{2}\leq 1=1\sp{2}$ imply
that $s\leq 1$. Likewise, $c\leq 1$, so $0\leq c,\,s,\,c\sp{2},\,s
\sp{2}\leq 1$. That $c\sp{2}\leq c$ and $s\sp{2}\leq s$ then follow
from \cite[Lemma 2.5 (i)]{FSynap}, and (vi) is proved.

Since $0\leq c,s$ and $c\sp{2}=1-s\sp{2}$, it follows that $C(c)=
C((c\sp{2})\sp{1/2})=C(c\sp{2})=C(1-s\sp{2})=C(s\sp{2})=C((s\sp{2})
\sp{1/2})=C(s)$, proving (vii).

By (iii) and (vii), $p,q\in C(c)=C(s)$, whence $cCr$ and $sCr$. Also,
$c\in C(c)=C(s)$, completing the proof of (viii).

By parts (iv) and (vii) above, $a\in C(p)\cap C(q)\Rightarrow a\in
C(p-q\sp{\perp})\Rightarrow a\in C(|p-q\sp{\perp}|)\Rightarrow
a\in C(c)=C(s)$, proving (ix).
\end{proof}

The following theorem concerns the carriers $c\dg$ and $s\dg$ of the
cosine and sine effects $c$ and $s$.

\begin{theorem} \label{th:carofc&s}
\
\begin{enumerate}
\item $c\dg Cp$, $c\dg Cq$, $c\dg Cs$, $c\dg Cr$, $s\dg Cp$, $s\dg Cq$, $s\dg Cr$,
 $s\dg Cc$, and $c\dg C s\dg$.
\item $c\dg=(p\vee q\sp{\perp})\wedge(p\sp{\perp}\vee q)$ and
 $s\dg=(p\vee q)\wedge(p\sp{\perp}\vee q\sp{\perp}).$
\item $(cs)\dg=c\dg s\dg= c\dg\wedge s\dg=(p\vee q)\wedge(p\vee q\sp{\perp})
 \wedge(p\sp{\perp}\vee q)\wedge(p\sp{\perp}\vee q\sp{\perp})=r=[p,q]$.

\item $c\sp{2}{s\dg}\sp{\perp}={s\dg}\sp{\perp}c\sp{2}={s\dg}\sp{\perp}$,
 whence ${s\dg}\sp{\perp}\leq c\sp{2}\leq c$.
\item $s\sp{2}{c\dg}\sp{\perp}={c\dg}\sp{\perp}s\sp{2}={c\dg}\sp{\perp}$,
 whence ${c\dg}\sp{\perp}\leq s\sp{2}\leq s$.

\end{enumerate}
\end{theorem}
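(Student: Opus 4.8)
The plan is to establish part (ii) first, since part (iii) is a quick corollary of it, while parts (i), (iv), (v) are largely independent of the formulas. For $c\dg$: because $c\dg=(c\sp{2})\dg$ and $c\sp{2}=pqp+p\sp{\perp}q\sp{\perp}p\sp{\perp}$ is a sum of positive elements, Lemma \ref{lm:carrierofsum} gives $c\dg=(pqp)\dg\vee(p\sp{\perp}q\sp{\perp}p\sp{\perp})\dg$, and Lemma \ref{lm:carofpqp}(ii), applied with $p$ and with $p\sp{\perp}$, evaluates these to $(p\wedge(p\sp{\perp}\vee q))\vee(p\sp{\perp}\wedge(p\vee q\sp{\perp}))$. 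It then remains to verify the lattice identity $(p\wedge(p\sp{\perp}\vee q))\vee(p\sp{\perp}\wedge(p\vee q\sp{\perp}))=(p\vee q\sp{\perp})\wedge(p\sp{\perp}\vee q)$. I would prove this by setting $x:=p\vee q\sp{\perp}$ and $y:=p\sp{\perp}\vee q$, observing $p\le x$ and $p\sp{\perp}\le y$, so $p$ and $p\sp{\perp}$ each commute with both $x$ and $y$, hence with $x\wedge y$; Theorem \ref{th:distributive} applied to $x\wedge y$, $p$, $p\sp{\perp}$, together with $1=p\vee p\sp{\perp}$, $x\wedge p=p$ and $y\wedge p\sp{\perp}=p\sp{\perp}$, then yields $x\wedge y=((x\wedge y)\wedge p)\vee((x\wedge y)\wedge p\sp{\perp})=(p\wedge y)\vee(p\sp{\perp}\wedge x)$, which is exactly what is needed. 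Since $s$ results from the same construction with $q$ replaced by $q\sp{\perp}$ (indeed $s\sp{2}=pq\sp{\perp}p+p\sp{\perp}(q\sp{\perp})\sp{\perp}p\sp{\perp}$), the formula $s\dg=(p\vee q)\wedge(p\sp{\perp}\vee q\sp{\perp})$ follows by the same computation with $q$ and $q\sp{\perp}$ interchanged. This lattice identity is the one step that needs genuine thought; everything else is bookkeeping.

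Part (iii): $cCs$ by Theorem \ref{th:sceffects}(viii), so $cs\in A$ and Lemma \ref{lm:carrierofprod} gives $(cs)\dg=c\dg s\dg=s\dg c\dg=c\dg\wedge s\dg$. Substituting the formulas from part (ii) and reassociating the meet gives $(p\vee q)\wedge(p\vee q\sp{\perp})\wedge(p\sp{\perp}\vee q)\wedge(p\sp{\perp}\vee q\sp{\perp})$, which is $[p,q]=r$ by Definitions \ref{df:commutator} and \ref{df:r}.

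Part (i): the carrier satisfies $c\dg\in CC(c)$ and $s\dg\in CC(s)$, and by Theorem \ref{th:sceffects}(vii) we have $C(c)=C(s)$, hence $CC(c)=CC(s)$; moreover $p,q,r,c,s\in C(c)=C(s)$ by Theorem \ref{th:sceffects}(viii). The double-commutant property then immediately yields $c\dg Cp$, $c\dg Cq$, $c\dg Cs$, $c\dg Cr$ and the four analogues for $s\dg$; finally $c\dg Cs\dg$ because $s\dg\in C(s)=C(c)$ and $c\dg\in CC(c)$.

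Parts (iv) and (v): since $(s\sp{2})\dg=s\dg$ we have $s\sp{2}s\dg=s\sp{2}$, and as $s\dg$ commutes with $s$ (hence with $s\sp{2}$), using $c\sp{2}=1-s\sp{2}$ we compute
\[
c\sp{2}{s\dg}\sp{\perp}=(1-s\sp{2})(1-s\dg)=1-s\dg-s\sp{2}+s\sp{2}=1-s\dg={s\dg}\sp{\perp},
\]
and ${s\dg}\sp{\perp}c\sp{2}={s\dg}\sp{\perp}$ by commutativity. The relation $c\sp{2}{s\dg}\sp{\perp}={s\dg}\sp{\perp}$ says the projection ${s\dg}\sp{\perp}$ lies below the effect $c\sp{2}$ (recall that for a projection $e\sb{0}$ and an effect $e$, $e\sb{0}\le e$ iff $e\sb{0}e=e\sb{0}$), and $c\sp{2}\le c$ by Theorem \ref{th:sceffects}(vi), so ${s\dg}\sp{\perp}\le c\sp{2}\le c$. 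Part (v) is identical with $c$ and $s$ interchanged, using $(c\sp{2})\dg=c\dg$ and $s\sp{2}=1-c\sp{2}$.
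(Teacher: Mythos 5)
Your proof is correct and follows essentially the same route as the paper: for (ii), the carrier-of-a-sum lemma (Lemma \ref{lm:carrierofsum}) together with Lemma \ref{lm:carofpqp}, then a lattice computation via Theorem \ref{th:distributive}; for (iii), Lemma \ref{lm:carrierofprod}; for (i), the double-commutant properties of $c\dg$ and $s\dg$ combined with Theorem \ref{th:sceffects}; and for (iv)--(v), the computation ${s\dg}\sp{\perp}c\sp{2}={s\dg}\sp{\perp}(1-s\sp{2})={s\dg}\sp{\perp}$. The only cosmetic difference is in the lattice identity of (ii), where you distribute $(p\vee q\sp{\perp})\wedge(p\sp{\perp}\vee q)$ over $p\vee p\sp{\perp}=1$ rather than expanding the join $(p\wedge(p\sp{\perp}\vee q))\vee w$ as the paper does; both are routine uses of Theorem \ref{th:distributive}.
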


\begin{proof}

(i) Part (i) follows from Theorem \ref{th:sceffects} (viii), and the
facts that $c\dg\in CC(c)$ and $s\dg\in CC(s)$.

(ii) Since $0\leq pqp,\ p\sp{\perp}q\sp{\perp}p\sp{\perp}$, we infer from
Theorem \ref{lm:carofpqp} and Lemma \ref{lm:carrierofsum} that
\setcounter{equation}{0}
\[
c\dg=(c\sp{2})\dg=(pqp+p\sp{\perp}q\sp{\perp}p\sp{\perp})\dg
 =(pqp)\dg\vee(p\sp{\perp}q\sp{\perp}p\sp{\perp})\dg
\]
\begin{equation} \label{eq:car01}
=(p\wedge(p\sp{\perp}\vee q))\vee(p\sp{\perp}\wedge(p\vee q\sp{\perp}))
=(p\wedge(p\sp{\perp}\vee q))\vee w,
\end{equation}
where $w:=p\sp{\perp}\wedge(p\vee q\sp{\perp})$. Now $pC(p\sp{\perp}\vee
q)$ and $pCw$, whence
\begin{equation} \label{eq:car02}
(p\wedge(p\sp{\perp}\vee q))\vee w=(p\vee w)\wedge(p\sp{\perp}\vee q\vee w).
\end{equation}
But $pCp\sp{\perp}$ and $pC(p\vee q\sp{\perp})$, whence
\begin{equation} \label{eq:car03}
p\vee w=p\vee(p\sp{\perp}\wedge(p\vee q\sp{\perp}))=(p\vee p\sp{\perp})
\wedge(p\vee p\vee q\sp{\perp})=p\vee q\sp{\perp}.
\end{equation}
Furthermore, since $w\leq p\sp{\perp}$,
\begin{equation} \label{eq:car04}
p\sp{\perp}\vee q\vee w=p\sp{\perp}\vee q.
\end{equation}
By Equations (\ref{eq:car03}) and (\ref{eq:car04}),
\[
(p\vee w)\wedge(p\sp{\perp}\vee q\vee w)=(p\vee q\sp{\perp})
 \wedge(p\sp{\perp}\vee q),
\]
whence by Equations (\ref{eq:car02}) and (\ref{eq:car01}),
$c\dg=(p\vee q\sp{\perp})\wedge(p\sp{\perp}\vee q)$.

Similarly,
\[
s\dg=(s\sp{2})\dg=((pq\sp{\perp}p+p\sp{\perp}qp\sp{\perp}))\dg,
\]
and replacing $q$ by $q\sp{\perp}$ in the calculations above, we
find that $s\dg=(p\vee q)\wedge(p\sp{\perp}\vee q\sp{\perp})$,
completing the proof of (ii).

(iii)  Part (iii) follows from Theorem (ii), \ref{th:sceffects} (viii),
Lemma \ref{lm:carrierofprod}, and Definitions \ref{df:commutator} and
\ref{df:r}.

(iv) Since ${s\dg}\sp{\perp}c\sp{2}={s\dg}\sp{\perp}(1-s\sp{2})=
{s\dg}\sp{\perp}-0={s\dg}\sp{\perp}$, we have ${s\dg}\sp{\perp}\leq c
\sp{2}\leq c$.

(iv) Since ${c\dg}\sp{\perp}s\sp{2}={c\dg}\sp{\perp}(1-c\sp{2})=
{c\dg}\sp{\perp}-0={c\dg}\sp{\perp}$, whence ${c\dg}\sp{\perp}
\leq s\sp{2}\leq s$.
\end{proof}

Using Theorem \ref{th:carofc&s}, we obtain formulas for Halmos'
four ``thoroughly uninteresting" projections in terms of $p$, $q$,
$c$, and $s$ as follows.

\begin{corollary} \label{lm:uninteresting}
\
\begin{enumerate}
\item ${s\dg}\sp{\perp}p=p{s\dg}\sp{\perp}={s\dg}\sp{\perp}q=q{s\dg}\sp{\perp}
 ={s\dg}\sp{\perp}\wedge p={s\dg}\sp{\perp}\wedge q=p\wedge q$.
\item ${c\dg}\sp{\perp}p=p{c\dg}\sp{\perp}={c\dg}\sp{\perp}q\sp{\perp}
=q\sp{\perp}{c\dg}\sp{\perp}={c\dg}\sp{\perp}\wedge p={c\dg}\sp{\perp}
\wedge q\sp{\perp}=p\wedge q\sp{\perp}$.
\item ${c\dg}\sp{\perp}p\sp{\perp}=p\sp{\perp}{c\dg}\sp{\perp}={c\dg}\sp{\perp}q
 =q{c\dg}\sp{\perp}={c\dg}\sp{\perp}\wedge p\sp{\perp}={c\dg}\sp{\perp}\wedge q
 =p\sp{\perp}\wedge q$.
\item ${s\dg}\sp{\perp}p\sp{\perp}=p\sp{\perp}{s\dg}\sp{\perp}={s\dg}\sp{\perp}
 q\sp{\perp}=q\sp{\perp}{s\dg}\sp{\perp}={s\dg}\sp{\perp}\wedge p\sp{\perp}=
 {s\dg}\sp{\perp}\wedge q\sp{\perp}=p\sp{\perp}\wedge q\sp{\perp}$.
\end{enumerate}
\end{corollary}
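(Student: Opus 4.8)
The plan is to prove part (i) in detail and then obtain parts (ii), (iii), (iv) by the obvious substitutions $q \leftrightarrow q\sp{\perp}$, $p \leftrightarrow p\sp{\perp}$, using the symmetry of the hypotheses in $p,q,p\sp{\perp},q\sp{\perp}$. For part (i), the starting point is the formula $s\dg = (p\vee q)\wedge(p\sp{\perp}\vee q\sp{\perp})$ from Theorem \ref{th:carofc&s}(ii), so that by De Morgan ${s\dg}\sp{\perp} = (p\sp{\perp}\wedge q\sp{\perp})\vee(p\wedge q)$. The two projections $p\sp{\perp}\wedge q\sp{\perp}$ and $p\wedge q$ are orthogonal (the first lies below $q\sp{\perp}$, the second below $q$), so this join is in fact an orthosum: ${s\dg}\sp{\perp} = (p\wedge q)\oplus(p\sp{\perp}\wedge q\sp{\perp})$.

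Next I would compute ${s\dg}\sp{\perp}p$. Since $p$ commutes with $p\wedge q$ (indeed $p\wedge q\leq p$) and with $p\sp{\perp}\wedge q\sp{\perp}$ (indeed $p\sp{\perp}\wedge q\sp{\perp}\leq p\sp{\perp}\perp p$), we get $p\,{s\dg}\sp{\perp} = p(p\wedge q) + p(p\sp{\perp}\wedge q\sp{\perp}) = (p\wedge q) + 0 = p\wedge q$, and likewise ${s\dg}\sp{\perp}p = p\wedge q$; in particular $p$ and ${s\dg}\sp{\perp}$ commute, so ${s\dg}\sp{\perp}\wedge p = {s\dg}\sp{\perp}p = p\wedge q$. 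The identical argument with $q$ in place of $p$ — using $p\wedge q\leq q$ and $p\sp{\perp}\wedge q\sp{\perp}\leq q\sp{\perp}\perp q$ — gives $q\,{s\dg}\sp{\perp} = {s\dg}\sp{\perp}q = {s\dg}\sp{\perp}\wedge q = p\wedge q$. Chaining these equalities yields the full string asserted in (i).

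For parts (ii)--(iv) I would simply remark that replacing $q$ by $q\sp{\perp}$ throughout sends $s\dg = (p\vee q)\wedge(p\sp{\perp}\vee q\sp{\perp})$ to $(p\vee q\sp{\perp})\wedge(p\sp{\perp}\vee q) = c\dg$ (by Theorem \ref{th:carofc&s}(ii)), and sends the pair $\{p\wedge q,\ p\sp{\perp}\wedge q\sp{\perp}\}$ to $\{p\wedge q\sp{\perp},\ p\sp{\perp}\wedge q\}$; so the verbatim argument gives (ii) with $q\sp{\perp}$ and (iii) with $q$. Part (iv) then follows either by applying the (i)-argument to $p\sp{\perp}$ and $q\sp{\perp}$ — noting $s\dg$ is symmetric under $p\mapsto p\sp{\perp}$, $q\mapsto q\sp{\perp}$ since $(p\sp{\perp}\vee q\sp{\perp})\wedge(p\vee q) = s\dg$ — or directly from ${s\dg}\sp{\perp} = (p\wedge q)\oplus(p\sp{\perp}\wedge q\sp{\perp})$ by picking off the $p\sp{\perp}$- and $q\sp{\perp}$-parts. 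There is no real obstacle here; the only point requiring a moment's care is justifying that each relevant join is an orthosum (hence that multiplication distributes over it cleanly and that $\wedge$ coincides with the product), which is exactly what the orthogonality observations above provide.
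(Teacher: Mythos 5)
Your proposal is correct and follows essentially the same route as the paper: both start from Theorem \ref{th:carofc&s}(ii) plus De\,Morgan to write ${s\dg}\sp{\perp}=(p\wedge q)\vee(p\sp{\perp}\wedge q\sp{\perp})$ (and the analogous formula for ${c\dg}\sp{\perp}$) and then peel off $p\wedge q$ etc.\ by multiplying with $p$, $q$, and their orthocomplements. The only cosmetic difference is that you justify the product computation via orthogonality of the two summands (so the join is an orthosum and multiplication acts termwise), whereas the paper invokes Theorem \ref{th:carofc&s}(i) for commutativity and distributes the meet; both are valid.
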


\begin{proof}
As a consequence of Theorem \ref{th:carofc&s} (i), the projections
${c\dg}\sp{\perp}$ and ${s\dg}\sp{\perp}$ commute with both $p$ and $q$.
Also, by Theorem \ref{th:carofc&s} (ii) and De\,Morgan,
\[
{c\dg}\sp{\perp}=(p\sp{\perp}\wedge q)\vee(p\wedge q\sp{\perp})
 \text{\ and\ } {s\dg}\sp{\perp}=(p\sp{\perp}\wedge q\sp{\perp})
 \vee(p\wedge q).
\]
We prove (i). Proofs of (ii), (iii), and (iv) are similar. We have
${s\dg}\sp{\perp}p=p{s\dg}\sp{\perp}=p\wedge[(p\sp{\perp}\wedge q
\sp{\perp})\vee(p\wedge q)]=(p\wedge p\sp{\perp}\wedge q\sp{\perp})
\vee(p\wedge p\wedge q)=p\wedge q$. Likewise, ${s\dg}\sp{\perp}q=
q{s\dg}\sp{\perp}=q\wedge[(p\sp{\perp}\wedge q\sp{\perp})\vee(p
\wedge q)]=(q\wedge p\sp{\perp}\wedge q\sp{\perp})\vee(q\wedge p
\wedge q)=p\wedge q$.
\end{proof}

\section{A general CS-decomposition theorem}
\label{sc:genCS}

We devote this section to a proof of a general CS-decomposition
theorem that does not require the projections $p$ and $q$ to be
in generic position (Theorem \ref{th:genCSdecomp} below).

By Theorem \ref{th:sceffects} (iii), we have the following result.
\begin{lemma} \label{lm:csPeirce}
The Peirce decomposition of $q$ with respect to $p$ takes the form
\[
q=pqp+pqp\sp{\perp}+p\sp{\perp}qp+p\sp{\perp}qp\sp{\perp}=c\sp{2}p
+pqp\sp{\perp}+p\sp{\perp}qp+s\sp{2}p\sp{\perp}.
\]
\end{lemma}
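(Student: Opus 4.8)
The plan is to start from the ordinary Peirce decomposition of $q$ with respect to $p$, namely
\[
q = pqp + pqp\sp{\perp} + p\sp{\perp}qp + p\sp{\perp}qp\sp{\perp},
\]
which holds in the enveloping algebra $R$ (and whose diagonal and mixed parts lie in $A$, as recalled before Example \ref{ex:matrices1}). The only thing that needs justification is the identification of the two diagonal blocks $pqp$ and $p\sp{\perp}qp\sp{\perp}$ with $c\sp{2}p$ and $s\sp{2}p\sp{\perp}$ respectively.

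For the first block, I would invoke Theorem \ref{th:sceffects} (iii), which gives $pc\sp{2}=pqp=c\sp{2}p$. Hence $pqp = c\sp{2}p$ directly, and this is the replacement for the top-left entry. For the second block, I would again use Theorem \ref{th:sceffects} (iii), specifically the identity $s\sp{2}p\sp{\perp}=p\sp{\perp}qp\sp{\perp}=p\sp{\perp}s\sp{2}$, which gives $p\sp{\perp}qp\sp{\perp}=s\sp{2}p\sp{\perp}$. Substituting both of these into the Peirce decomposition yields exactly
\[
q = c\sp{2}p + pqp\sp{\perp} + p\sp{\perp}qp + s\sp{2}p\sp{\perp},
\]
as claimed.

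There is essentially no obstacle here: the statement is a one-line bookkeeping consequence of part (iii) of Theorem \ref{th:sceffects}, which has already been established. The only mild point worth a word is that $c\sp{2}p$ really does equal $pqp$ (not merely $pc\sp{2}p$), but that is immediate from $c\sp{2}p = pc\sp{2}$ together with $pc\sp{2}p = pqp$, or directly from the stated equality $pqp = c\sp{2}p$. So the proof is simply: write down the Peirce decomposition of $q$ with respect to $p$, then replace $pqp$ by $c\sp{2}p$ and $p\sp{\perp}qp\sp{\perp}$ by $s\sp{2}p\sp{\perp}$ using Theorem \ref{th:sceffects} (iii). I would keep the write-up to two or three sentences.
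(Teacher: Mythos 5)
Your proposal is correct and follows the paper's own route exactly: the paper derives this lemma directly from Theorem \ref{th:sceffects} (iii), substituting $pqp=c\sp{2}p$ and $p\sp{\perp}qp\sp{\perp}=s\sp{2}p\sp{\perp}$ into the Peirce decomposition, just as you do.
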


\noindent Thus, for the diagonal part of the Peirce decomposition
of $q$ with respect to $p$, we have
\[
pqp+p\sp{\perp}qp\sp{\perp}=c\sp{2}p+s\sp{2}p\sp{\perp},
\]
which is perfectly consistent with Halmos' Theorem 2 in \cite{Halmos},
often called Halmos' \emph{two projections theorem} or Halmos' \emph
{CS-decomposition theorem}. However, for full compliance with Halmos'
theorem, we have to find a suitable formula, \emph{in terms of the
product} $cs$, for the off-diagonal part $pqp\sp{\perp}+p\sp{\perp}qp$
of the decomposition. (Note that Halmos' theorem was proved under the
additional hypothesis that the projections involved are in generic
position---see Section \ref{sc:Generic} below.) In this connection,
the next theorem has a role to play.

\begin{theorem} \label{th:squarecs}
\
\begin{enumerate}
\item $c\sp{2}s\sp{2}=pqp+qpq-pqpq-qpqp=p(qp\sp{\perp}q)p+p\sp{\perp}
(qpq)p\sp{\perp}\newline\hspace*{.27 in} =(pqp\sp{\perp}+p\sp{\perp}qp)\sp{2}$.
\item $cs=|pqp\sp{\perp}+p\sp{\perp}qp|$.
\end{enumerate}
\end{theorem}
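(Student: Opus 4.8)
The plan is to prove (i) by a direct Peirce-decomposition computation and then derive (ii) by taking square roots. For (i), I would start from the known expressions $c^2 = pqp + p^\perp q^\perp p^\perp = 1 - p + pq + qp - q$ and $s^2 = pq^\perp p + p^\perp q p^\perp = p - pq - qp + q$ from Theorem \ref{th:sceffects}(i),(ii). Multiplying these two (which is legitimate since $c^2 C s^2$ by Theorem \ref{th:sceffects}(vii)--(viii), so the product lies in $A$) and simplifying using $p^2 = p$, $q^2 = q$, the middle expression $c^2 s^2 = pqp + qpq - pqpq - qpqp$ should fall out after cancellation. I expect the bookkeeping here to be the fussiest part, but it is routine. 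Alternatively, and perhaps more cleanly, I would use $c^2 = 1 - s^2$ and $s^2 = (p-q)^2$ to write $c^2 s^2 = s^2 - s^4 = (p-q)^2 - (p-q)^4$; expanding $(p-q)^2 = p - pq - qp + q$ and squaring again, the quartic terms should reduce to the stated form.

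For the second equality in (i), I would verify that $pqp + qpq - pqpq - qpqp = p(qp^\perp q)p + p^\perp(qpq)p^\perp$. Expanding the right side via $p^\perp = 1-p$: $p(qp^\perp q)p = pqp - pqpqp$ (wait -- $pq(1-p)qp = pqqp - pqpqp = pqp - pqpqp$), hmm, that has a $pqpqp$, not $pqpq$. Let me reconsider: the claim is that the \emph{diagonal part} (with respect to $p$) of $c^2 s^2$ has this form, so I should first check that $c^2 s^2$ equals its own diagonal part with respect to $p$, i.e. that $p(c^2s^2)p^\perp + p^\perp(c^2 s^2)p = 0$; equivalently $c^2 s^2 C p$, which holds by Theorem \ref{th:sceffects}(viii) since $cCp$ and $sCp$ give $c^2 C p$, $s^2 C p$, hence $c^2 s^2 C p$. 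Then $c^2 s^2 = p(c^2 s^2)p + p^\perp (c^2 s^2) p^\perp$, and substituting $c^2 s^2 = s^2 - s^4$ with $s^2 = (p-q)^2$, and using $p(p-q)^2 p = p q^\perp p$-style identities from Theorem \ref{th:sceffects}(iii), namely $ps^2 p = pq^\perp p$ and $p^\perp s^2 p^\perp = p^\perp q p^\perp$, I can rewrite $p(c^2 s^2)p = ps^2 p - ps^4 p = pq^\perp p - p(q^\perp p q^\perp)p = p(q^\perp - q^\perp p q^\perp)p = p(q^\perp p^\perp q^\perp)p = p(q p^\perp q)p^\perp{}$-- no. The correct route: $p s^4 p = (ps^2)(s^2 p) = (pq^\perp p)(pq^\perp p) \cdot$ is not right either since $s^2 p = pq^\perp p$ only after multiplying by $p$ on both sides. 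Since $s^2 C p$, $ps^2 = s^2 p$, so $ps^4 p = p s^2 s^2 p = (ps^2)(s^2)(p)$; using $ps^2 = ps^2 p + ps^2 p^\perp$... the cleanest is $p s^4 p = p s^2 \cdot s^2 p$ and insert $1 = p + p^\perp$: $p s^2 (p+p^\perp) s^2 p = (ps^2 p)(ps^2... )$ no, $ps^2 p \cdot ps^2 p$ requires $p$ in the middle. OK: $ps^2 s^2 p = ps^2 p s^2 p + p s^2 p^\perp s^2 p = (pq^\perp p)(pq^\perp p) + (pq^\perp p^\perp)(p^\perp q^\perp p)$ using $s^2 = pq^\perp p + p^\perp q p^\perp$ split... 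I think the honest statement is: this is a moderately involved but purely mechanical Peirce-algebra computation, and the identity $= p(qp^\perp q)p + p^\perp(qpq)p^\perp$ is exactly what one gets by collecting the $p\cdot{}p$ and $p^\perp\cdot{}p^\perp$ corners. The \textbf{main obstacle} is getting all these quartic Peirce-corner cancellations right without sign errors.

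For the third equality, that $p(qp^\perp q)p + p^\perp(qpq)p^\perp = (pqp^\perp + p^\perp qp)^2$: square the off-diagonal element $d := pqp^\perp + p^\perp q p$ (it lies in $A$ by the Peirce remarks in Section \ref{sc:OMLP}), getting $d^2 = pqp^\perp qp + p^\perp q p q p^\perp$ since the cross terms $pqp^\perp \cdot pqp^\perp = 0$ and $p^\perp qp\cdot p^\perp qp = 0$ vanish (adjacent $p^\perp p = 0$). Then $pqp^\perp q p = p(qp^\perp q)p$ and $p^\perp q p q p^\perp = p^\perp(qpq)p^\perp$, done. For part (ii): from (i), $(pqp^\perp + p^\perp qp)^2 = c^2 s^2$; since $0 \le c, s$ and $cCs$ (Theorem \ref{th:sceffects}(viii)), the product $cs$ satisfies $0 \le cs$ and $(cs)^2 = c^2 s^2$, so $cs$ is \emph{the} positive square root of $c^2 s^2$; by the definition of absolute value, $|pqp^\perp + p^\perp qp| = ((pqp^\perp + p^\perp qp)^2)^{1/2} = (c^2 s^2)^{1/2} = cs$, using uniqueness of square roots.
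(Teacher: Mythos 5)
Your overall route is exactly the paper's: multiply out $c^2=1-p+pq+qp-q$ and $s^2=p-pq-qp+q$ (or equivalently compute $s^2-s^4$) to get the first equality, verify the remaining two equalities of (i) by expanding with $p^{\perp}=1-p$, and obtain (ii) from $(cs)^2=c^2s^2$, $0\le cs$ (since $0\le c,s$ and $cCs$) and uniqueness of positive square roots; your treatment of the third equality and of (ii) is correct and matches the paper. The one place your writeup stalls --- the middle equality --- is not actually an obstacle, and the long digression through diagonal parts and $ps^4p$ should simply be deleted: having computed $p(qp^{\perp}q)p=pqp-pqpqp$, just expand the other corner as well, $p^{\perp}(qpq)p^{\perp}=(1-p)qpq(1-p)=qpq-qpqp-pqpq+pqpqp$, and observe that the two $pqpqp$ terms cancel, giving $p(qp^{\perp}q)p+p^{\perp}(qpq)p^{\perp}=pqp+qpq-pqpq-qpqp$ in two lines. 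With that substitution your proof is complete and coincides with the paper's.
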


\begin{proof}\

(i) We have $c\sp{2}=1-(p-q)\sp{2}=1-p+pq+qp-q$ and $s\sp{2}=(p-q)\sp{2}=
p-pq-qp+q$, and it follows by direct calculation that $c\sp{2}s\sp{2}=
pqp+qpq-pqpq-qpqp$. Also, direct calculations using the fact that
$p\sp{\perp}=1-p$ yield $pqp+qpq-pqpq-qpqp=p(qp\sp{\perp}q)p+p\sp{\perp}
(qpq)p\sp{\perp}=(pqp\sp{\perp}+p\sp{\perp}qp)\sp{2}$.

(ii) As $cs=sc$, we have $(cs)\sp{2}=c\sp{2}s\sp{2}$. Also, $0\leq c,s$ and
$cs=sc$, so $0\leq cs$ by \cite[Lemma 1.5]{FSynap}, and (ii) then follows.
\end{proof}

\begin{definition} \label{df:symsuvk}
As per Theorem \ref{th:sceffects} (iv) and Theorem \ref{th:squarecs}
(ii), we define symmetries $u$, $v$, and $k$ in $A$ by
polar decomposition of $p-q\sp{\perp}$, $p-q$, and $pqp\sp{\perp}+
p\sp{\perp}qp$, respectively, as
follows:
\begin{enumerate}
\item[(1)] $p-q\sp{\perp}=p+q-1=cu=uc$, where $u\in CC(p-q\sp{\perp})$.
\item[(2)] $p-q=sv=vs$, where $v\in CC(p-q)$.
\item[(3)] $pqp\sp{\perp}+p\sp{\perp}qp=csk=kcs$, where $k\in
 CC(pqp\sp{\perp}+p\sp{\perp}qp)=p-ps-sp+s$.
\end{enumerate}
\end{definition}

\begin{lemma} \label{lm:uvkCsc}
The symmetries $u$, $v$, and $k$ commute with both $s$ and $c$.
\end{lemma}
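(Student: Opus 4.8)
The plan is to handle the three symmetries one at a time, each time exploiting the fact that the effect whose polar decomposition defines the symmetry double commutes with it, together with the commutativity facts already collected in Theorem \ref{th:sceffects} (viii) and Theorem \ref{th:squarecs}.

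First I would treat $u$. By Definition \ref{df:symsuvk} (1), $u \in CC(p - q^{\perp})$, i.e.\ $u$ double commutes with $p - q^{\perp}$. Since $c = |p - q^{\perp}|$ is the square root of $(p-q^{\perp})^{2}$, which lies in $CC(p-q^{\perp})$, it follows that $c \in CC(p-q^{\perp})$; hence $u C c$. For $s$: by Theorem \ref{th:sceffects} (vii) we have $C(c) = C(s)$, and more to the point $s^{2} = 1 - c^{2}$, so $s^{2} \in CC(p-q^{\perp})$ and therefore $s = (s^{2})^{1/2} \in CC(p-q^{\perp})$, giving $u C s$. (Alternatively one can argue directly from $c C s$, which is Theorem \ref{th:sceffects} (viii), once $u C c$ is known, by noting that anything commuting with $c$ commutes with $c^{2}$, hence with $s^{2} = 1 - c^{2}$, hence with $s$.) The same argument applied to $v \in CC(p-q)$, using $s = |p-q|$ and $c^{2} = 1 - s^{2}$, shows $v C s$ and $v C c$.

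For $k$ the reasoning is entirely parallel but uses Theorem \ref{th:squarecs} rather than Theorem \ref{th:sceffects} (iv): by Definition \ref{df:symsuvk} (3), $k \in CC(pqp^{\perp} + p^{\perp}qp)$, and by Theorem \ref{th:squarecs} (ii), $cs = |pqp^{\perp} + p^{\perp}qp|$, so $cs = ((pqp^{\perp}+p^{\perp}qp)^{2})^{1/2} \in CC(pqp^{\perp}+p^{\perp}qp)$ and hence $k C (cs)$. To pass from $k C (cs)$ to $k C c$ and $k C s$ I would use the carrier information: $(cs)^{\dg} = c^{\dg} \wedge s^{\dg}$ by Lemma \ref{lm:carrierofprod}, and on the complementary pieces $cs$ behaves like $c$ or like $s$ because of the relations $s^{2}{s^{\dg}}^{\perp} = 0$ and $c^{2}{c^{\dg}}^{\perp} = 0$ from Theorem \ref{th:carofc&s} (iv), (v) — more cleanly, since $c$ and $s$ commute and $cs$ commutes with $k$, one checks that $k$ commutes with $c^{2}s^{2} = c^{2}(1-c^{2})$, and then with the spectral-type combinations needed to isolate $c$; the cleanest route is probably to observe that $k \in CC(pqp^{\perp}+p^{\perp}qp)$ and that $c^{2}$, $s^{2}$, and hence $c$, $s$ all double commute with $pqp^{\perp}+p^{\perp}qp$ (because $c^{2}s^{2} = (pqp^{\perp}+p^{\perp}qp)^{2}$ and $c^{2}+s^{2}=1$, so both $c^{2}$ and $s^{2}$ are polynomials-plus-square-roots built from $(pqp^{\perp}+p^{\perp}qp)^{2}$), whence $k C c$ and $k C s$ directly.

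The one genuine subtlety — the place I expect to spend real effort rather than quoting a one-line fact — is exactly this last passage for $k$: showing that $c$ and $s$ individually (not merely their product $cs$) double commute with $pqp^{\perp}+p^{\perp}qp$. The product $c^{2}s^{2} = (pqp^{\perp}+p^{\perp}qp)^{2}$ is manifestly in $CC(pqp^{\perp}+p^{\perp}qp)$, but disentangling $c^{2}$ from $s^{2}$ requires using $c^{2}+s^{2}=1$ together with the carrier identities of Theorem \ref{th:carofc&s} to invert the relation on each Peirce-type block. Once that is in hand, $c = (c^{2})^{1/2}$ and $s = (s^{2})^{1/2}$ inherit the double commutation (square roots lie in the double commutant of their squares), and $k C c$, $k C s$ follow. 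Everything else is a direct appeal to results already proved above.
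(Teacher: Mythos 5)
Your treatment of $u$ and $v$ is fine and is essentially the paper's argument: $uCc$ comes straight from the polar decomposition $p-q^{\perp}=cu=uc$ (or, as you note, from $c=|p-q^{\perp}|\in CC(p-q^{\perp})$), and then $C(c)=C(s)$ (Theorem \ref{th:sceffects} (vii)) gives $uCs$; likewise for $v$. The gap is in the $k$ part. Write $e:=pqp^{\perp}+p^{\perp}qp$. To exploit $k\in CC(e)$ you only need $c,s\in C(e)$, i.e.\ that $c$ and $s$ \emph{merely commute} with $e$ -- and that is immediate from Theorem \ref{th:sceffects} (viii): $c$ and $s$ commute with both $p$ and $q$, hence with $e=pq+qp-2pqp$. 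Then $k\in CC(e)$ says precisely that $k$ commutes with every element of $A$ commuting with $e$, so $kCc$ and $kCs$ at once. That is the paper's proof, and it is one line; the fact you never invoke for $k$, namely $cCp$, $cCq$, $sCp$, $sCq$, is exactly the fact that does the work.

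Instead you aim at the much stronger claim $c^{2},s^{2}\in CC(e)$, on the grounds that $c^{2}s^{2}=e^{2}$ and $c^{2}+s^{2}=1$ make $c^{2}$ and $s^{2}$ ``built from $e^{2}$.'' That claim is false in general: those two relations only say that, spectrally, $c^{2}$ and $s^{2}$ are the two roots $\tfrac12\bigl(1\pm(1-4e^{2})^{1/2}\bigr)$, and \emph{which} root occurs where is information not contained in $e$. Concretely, on $\complex^{2}\oplus\complex^{2}$ take $p,q$ acting blockwise, at angle $\theta$ on the first block and $\pi/2-\theta$ on the second, with $\theta\neq\pi/4$. Then $e=\cos\theta\sin\theta\,k$ for a symmetry $k$, so $CC(e)$ consists only of real combinations $\alpha+\beta k$, while $c^{2}=\cos^{2}\theta\oplus\sin^{2}\theta$ is not of that form; hence $c^{2}\notin CC(e)$ even though $kCc$ of course holds. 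The same objection defeats your variant ``$k$ commutes with $c^{2}s^{2}=c^{2}(1-c^{2})$, hence with $c$'': commuting with a non-injective function of $c$ does not yield commuting with $c$. So the step you yourself flag as the one genuine subtlety is where the argument breaks, and the repair is not to disentangle $c^{2}$ from $s^{2}$ at all, but to use the elementary inclusion $C(p)\cap C(q)\subseteq C(e)$ together with $k\in CC(e)$.
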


\begin{proof}
We already know that $uCc$; hence, since $C(c)=C(s)$ (Theorem
\ref{th:sceffects} (vii)), we have $uCs$. Similarly, we already
know that $vCs$, and therefore $vCc$.

We have $cCp,\ cCq,\ sCp,\ sCq$, so $c,s\in C(pqp\sp{\perp}+
p\sp{\perp}qp)$. But $k\in CC(pqp\sp{\perp}+p\sp{\perp}qp)$, so
$kCc$ and $kCs$.
\end{proof}

\begin{lemma} \label{lm:uvProps}
\
\begin{enumerate}
\item $upqpu=qpq$ and $u(p\wedge(p\sp{\perp}\vee q))u=q\wedge
 (q\sp{\perp}\vee p)$.
\item $vpq\sp{\perp}pv=q\sp{\perp}pq\sp{\perp}$ and $v(p\wedge
 (p\sp{\perp}\vee q\sp{\perp})v=q\sp{\perp}\wedge(q\vee p)$.
\item $cs(pk+kp-k)=0$ and $r(pk+kp-k)=0$.
\end{enumerate}
\end{lemma}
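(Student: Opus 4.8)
The three statements concern the symmetries $u$, $v$, $k$ defined by polar decomposition in Definition \ref{df:symsuvk}, and each is really a ``conjugation formula'' of the type already encountered in Theorem \ref{th:6.5FPSynap}. The plan is to derive (i) directly from Theorem \ref{th:6.5FPSynap}, derive (ii) from (i) by the substitution $q \mapsto q\sp{\perp}$, and prove (iii) by a short computation exploiting that $k$ double commutes with the off-diagonal part $pqp\sp{\perp}+p\sp{\perp}qp$ together with the symmetry relation $csk = kcs$.

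For part (i): the symmetry $u$ is exactly the one occurring in Theorem \ref{th:6.5FPSynap}, since $u$ comes from the polar decomposition of $p-q\sp{\perp}$ and (by Theorem \ref{th:sceffects} (iv)) $c = |p-q\sp{\perp}|$, so $p-q\sp{\perp} = cu = uc$ with $u \in CC(p-q\sp{\perp})$. Theorem \ref{th:6.5FPSynap} (i) then gives $upqpu = qpq$ immediately, and Theorem \ref{th:6.5FPSynap} (ii) gives $u(p\wedge(p\sp{\perp}\vee q))u = q\wedge(p\vee q\sp{\perp})$, which is the stated formula (note $p\vee q\sp{\perp} = q\sp{\perp}\vee p$). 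So part (i) requires essentially no new work beyond identifying $u$ with the symmetry of Theorem \ref{th:6.5FPSynap}.

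For part (ii): I would simply replace $q$ by $q\sp{\perp}$ throughout part (i). The symmetry $v$ comes from the polar decomposition of $p - q = p - (q\sp{\perp})\sp{\perp}$, and $s = |p-q|$ by Theorem \ref{th:sceffects} (iv), so $v$ plays for the pair $(p, q\sp{\perp})$ the role that $u$ plays for the pair $(p,q)$. Applying part (i) with $q$ replaced by $q\sp{\perp}$ (and noting $q\sp{\perp\perp} = q$) yields $vpq\sp{\perp}pv = q\sp{\perp}pq\sp{\perp}$ and $v(p\wedge(p\sp{\perp}\vee q\sp{\perp}))v = q\sp{\perp}\wedge(q\vee p)$, exactly as claimed. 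One should check that the hypothesis ``$v$ double commutes with $p - q$'' is precisely what Definition \ref{df:symsuvk} (2) asserts, so the substitution is legitimate.

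For part (iii): here I would argue as follows. Write $m := pqp\sp{\perp} + p\sp{\perp}qp$ for the off-diagonal part, so $m = csk = kcs$ with $k \in CC(m)$, and note $m$ is symmetric ($m \in A$). First observe $cs(pk + kp - k) = (pk + kp - k)cs$ since $c,s$ commute with $p$ and with $k$ (Lemma \ref{lm:uvkCsc}, already proved parts aside—here use Theorem \ref{th:sceffects} (viii) for $cCp$, $sCp$ and the definition $k \in CC(m)$ together with $cCm$, $sCm$). Then compute $cs(pk+kp-k) = p(csk) + (csk)p - csk = pm + mp - m$. A direct Peirce-type calculation using $p\sp{\perp} = 1-p$ and $p\cdot pqp\sp{\perp} = pqp\sp{\perp}$, $p\cdot p\sp{\perp}qp = 0$, etc., gives $pm + mp = (pqp\sp{\perp} + p\sp{\perp}qp) + (pqp\sp{\perp} + p\sp{\perp}qp) $ after collecting terms—more carefully, $pm = pqp\sp{\perp}$, $mp = p\sp{\perp}qp$, so $pm + mp = m$, whence $pm + mp - m = 0$ and therefore $cs(pk+kp-k) = 0$. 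For the second identity, $r(pk + kp - k) = 0$: recall from Theorem \ref{th:carofc&s} (iii) that $r = (cs)\dg$, the carrier of $cs$; since $cs(pk+kp-k) = 0$ and $(cs)(pk+kp-k) = 0 \Leftrightarrow (cs)\dg(pk+kp-k) = 0$ by the defining property of the carrier, we get $r(pk+kp-k) = 0$. The main obstacle I anticipate is bookkeeping in the Peirce computation $pm + mp = m$ and making sure the commutation facts needed to move $cs$ past $pk+kp-k$ are all in hand before invoking them; none of this is deep, but it must be assembled in the right order—commutativity first, then the algebraic identity, then the carrier argument.
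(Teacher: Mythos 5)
Your proposal is correct and follows essentially the same route as the paper: parts (i) and (ii) are obtained by invoking Theorem \ref{th:6.5FPSynap} (with $q$ replaced by $q\sp{\perp}$ for the symmetry $v$), and part (iii) by moving $cs$ through using $cCp$, $sCp$, $csk=pqp\sp{\perp}+p\sp{\perp}qp$, and then the carrier identity $r=(cs)\dg$ from Theorem \ref{th:carofc&s} (iii). (Only note that the lemma you are proving is \ref{lm:uvProps}, not \ref{lm:uvkCsc}, whose already-established commutation facts you correctly use.)
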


\begin{proof}
Part (i) follows from Theorem \ref{th:6.5FPSynap} and part (ii)
follows from the same theorem upon replacing $q$ by $q\sp{\perp}$.

To prove (iii), we begin by noting that since $pCc$, $pCs$, and
$csk=pqp\sp{\perp}+p\sp{\perp}qp$, we have
\[
cspk=pcsk=p(pep\sp{\perp}+p\sp{\perp}ep)=pep\sp{\perp}.
\]
Moreover,
\[
cskp\sp{\perp}=(pep\sp{\perp}+p\sp{\perp}ep)p\sp{\perp}=pep\sp{\perp},
\]
and therefore
\[
cs(pk+kp-k)=cs(pk-kp\sp{\perp})=0,
\]
whence $r(pk+kp-k)=(cs)\dg(pk+kp-k)=0$ by Theorem \ref{th:carofc&s} (iii).
\end{proof}

Combining Lemma \ref{lm:csPeirce}, Definition \ref{df:symsuvk}, and Lemma
\ref{lm:uvkCsc}, we obtain the following generalized version of Halmos'
CS-decomposition theorem.

\begin{theorem}[Generalized CS-decomposition] \label{th:genCSdecomp}
\[
q=c\sp{2}p+csk+s\sp{2}p\sp{\perp},
\]
where $pqp=c\sp{2}p=pc\sp{2}$, $p\sp{\perp}qp\sp{\perp}=s\sp{2}p
\sp{\perp}=p\sp{\perp}s\sp{2}$, $pqp\sp{\perp}+p\sp{\perp}qp=csk$,
$k$ is a symmetry, $cCs$, $cCk$, $sCk$, and $k\in CC(pqp\sp{\perp}
+p\sp{\perp}qp)$.
\end{theorem}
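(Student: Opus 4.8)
The plan is to simply assemble the conclusion from results already established in Section \ref{sc:sinecosine}, since almost every ingredient has been separately verified. First I would invoke Lemma \ref{lm:csPeirce}, which gives the Peirce decomposition of $q$ with respect to $p$ in the form $q = c\sp{2}p + pqp\sp{\perp} + p\sp{\perp}qp + s\sp{2}p\sp{\perp}$, already identifying the diagonal blocks $pqp = c\sp{2}p$ and $p\sp{\perp}qp\sp{\perp} = s\sp{2}p\sp{\perp}$. The commutativity statements $pqp = c\sp{2}p = pc\sp{2}$ and $p\sp{\perp}qp\sp{\perp} = s\sp{2}p\sp{\perp} = p\sp{\perp}s\sp{2}$ are exactly Theorem \ref{th:sceffects} (iii) (using $sCp$, hence $sCp\sp{\perp}$, so $s\sp{2}Cp\sp{\perp}$).

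Next I would handle the off-diagonal part. By Theorem \ref{th:squarecs} (ii) we have $cs = |pqp\sp{\perp} + p\sp{\perp}qp|$, and Definition \ref{df:symsuvk} (3) introduces the symmetry $k$ via the polar decomposition $pqp\sp{\perp} + p\sp{\perp}qp = csk = kcs$, with $k \in CC(pqp\sp{\perp} + p\sp{\perp}qp)$. Substituting this into the Peirce decomposition yields $q = c\sp{2}p + csk + s\sp{2}p\sp{\perp}$, the desired formula. The commutativity $cCs$ is Theorem \ref{th:sceffects} (viii), and $cCk$, $sCk$ are precisely the content of Lemma \ref{lm:uvkCsc}. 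That $k$ is a symmetry and that $k \in CC(pqp\sp{\perp} + p\sp{\perp}qp)$ are both part of Definition \ref{df:symsuvk} (3). So every clause in the statement has already been proved.

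There is essentially no obstacle here: the theorem is a packaging of Lemma \ref{lm:csPeirce}, Theorem \ref{th:sceffects} (iii) and (viii), Theorem \ref{th:squarecs} (ii), Definition \ref{df:symsuvk} (3), and Lemma \ref{lm:uvkCsc}. The only point requiring a word of care is the identification of $pqp\sp{\perp} + p\sp{\perp}qp$ (an element of $A$, even though $pqp\sp{\perp}$ and $p\sp{\perp}qp$ individually need not lie in $A$) with $csk$; but this is legitimate because the polar decomposition in Definition \ref{df:symsuvk} (3) was taken of the element $pqp\sp{\perp} + p\sp{\perp}qp \in A$ itself, whose absolute value is $cs$ by Theorem \ref{th:squarecs} (ii). Thus the proof amounts to citing these results and performing the substitution, exactly as the sentence preceding the theorem statement already indicates.
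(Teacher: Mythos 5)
Your proposal is correct and matches the paper's own argument, which (in the sentence preceding the theorem) assembles exactly the same pieces: Lemma \ref{lm:csPeirce}, Definition \ref{df:symsuvk}, and Lemma \ref{lm:uvkCsc}, together with the facts from Theorem \ref{th:sceffects} and Theorem \ref{th:squarecs} that justify them. Your added remark that $pqp\sp{\perp}+p\sp{\perp}qp\in A$ even though its summands need not be is a sensible point of care, fully consistent with the paper.
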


Note that we do not have to assume that $p$ and $q$ are in generic
position in Theorem \ref{th:genCSdecomp}. However, at this point
in the development of our theory, we do not have much information
about the critical symmetry $k$ involved in the formula $pqp\sp{\perp}
+p\sp{\perp}qp=csk$ for the off-diagonal part of the Peirce decomposition
of $q$ with respect to $p$. Nevertheless, due to its generality, Theorem
\ref{th:genCSdecomp} can be useful.

\begin{corollary} \label{co:pCq}
Let $p,q\in P$ and let $c$ and $s$ be the cosine and sine effects for
$q$ with respect to $p$. Then the following conditions are mutually
equivalent{\rm:}
\begin{enumerate}
\item $pCq$.
\item The off-diagonal part of $q$ with respect to $p$ is zero, i.e.,
 $pqp\sp{\perp}+p\sp{\perp}qp=0$.
\item $q=c\sp{2}p+s\sp{2}p\sp{\perp}$.
\item $cs=0$
\item $c$ and $s$ are projections and $c\sp{\perp}=s$.
\item $c,s\in P$, $c\sp{\perp}=s$, and $q=cp+c\sp{\perp}p\sp{\perp}
 =s\sp{\perp}p+ sp\sp{\perp}=|p-s|$.
\item There exists a projection $t\in P$ such that $tCp$
 and $q=|p-t|$.
\end{enumerate}
\end{corollary}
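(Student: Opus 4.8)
The plan is to prove Corollary \ref{co:pCq} by establishing a cycle of implications that loops through all seven conditions, using the Generalized CS-decomposition (Theorem \ref{th:genCSdecomp}) as the central tool together with the commutativity facts from Theorem \ref{th:sceffects} and the carrier identities of Theorem \ref{th:carofc\&s}. A convenient order is (i)$\Rightarrow$(ii)$\Rightarrow$(iii)$\Rightarrow$(iv)$\Rightarrow$(v)$\Rightarrow$(vi)$\Rightarrow$(vii)$\Rightarrow$(i). First I would prove (i)$\Rightarrow$(ii): if $pCq$, then Lemma \ref{lm:diag,offdiag,zero}(ii) says the off-diagonal part of $q$ with respect to $p$ vanishes, which is exactly (ii). For (ii)$\Rightarrow$(iii), substitute $pqp\sp{\perp}+p\sp{\perp}qp=0$ into the Peirce/CS decomposition $q=c\sp{2}p+pqp\sp{\perp}+p\sp{\perp}qp+s\sp{2}p\sp{\perp}$ of Lemma \ref{lm:csPeirce} (equivalently Theorem \ref{th:genCSdecomp}) to get $q=c\sp{2}p+s\sp{2}p\sp{\perp}$. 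For (iii)$\Rightarrow$(iv): comparing (iii) with $q=c\sp{2}p+csk+s\sp{2}p\sp{\perp}$ forces $csk=0$; multiplying on the right by the symmetry $k$ (using $k\sp{2}=1$ and $cCk$, $sCk$ from Theorem \ref{th:genCSdecomp}) gives $cs=0$.

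Next, (iv)$\Rightarrow$(v): from $cs=0$ and $c\sp{2}+s\sp{2}=1$ (Theorem \ref{th:sceffects}(v)) we get $c=c(c\sp{2}+s\sp{2})=c\sp{3}+c s\cdot s=c\sp{3}$, and similarly multiplying $c\sp{2}+s\sp{2}=1$ by $c$ again or arguing via $c\sp{2}=c\sp{2}(c\sp{2}+s\sp{2})=c\sp{4}$ (since $c\sp{2}s\sp{2}=(cs)\sp{2}=0$) shows $c\sp{2}=c\sp{4}$; combined with $0\leq c\leq1$ this yields $c\sp{2}=c$, so $c\in P$, and likewise $s\in P$. Then $c\sp{2}+s\sp{2}=1$ becomes $c+s=1$, i.e. $s=c\sp{\perp}$. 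For (v)$\Rightarrow$(vi): given $c,s\in P$ with $s=c\sp{\perp}$, substitute $c\sp{2}=c$ and $s\sp{2}=s$ into (iii) to obtain $q=cp+c\sp{\perp}p\sp{\perp}=s\sp{\perp}p+sp\sp{\perp}$; and since $c=|p-q\sp{\perp}|=|p-s\sp{\perp}|$... more cleanly, using Theorem \ref{th:sceffects}(iv), $s=|p-q|$, and one should also record $q=|p-s|$ by a short direct computation: with $sCp$ (which follows since $sCc$, $cCp$, and $s=c\sp{\perp}$, or directly from $s\in C(p)$ via Theorem \ref{th:sceffects}(viii)), $(p-s)\sp{2}=p-ps-sp+s=p-2ps+s$, and one checks this equals $q$ when $q=s\sp{\perp}p+sp\sp{\perp}$ — actually it is cleaner to note $p-s$ is already self-adjoint with $(p-s)\sp2 = p - 2sp + s$; I would instead verify directly that the element $q=s\sp\perp p+sp\sp\perp$ satisfies $q\sp2=q$ and $|p-s|=q$ using $sCp$. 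For (vi)$\Rightarrow$(vii): just take $t:=s=c\sp{\perp}\in P$; then $tCp$ (since $sCp$) and $q=|p-t|$, as recorded in (vi).

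The remaining implication (vii)$\Rightarrow$(i) is where I expect the only real content. Suppose $t\in P$, $tCp$, and $q=|p-t|$. Since $tCp$, the element $p-t$ lies in the commutative context generated by $p$ and $t$; explicitly $p-t=(p-t)$ with $p\wedge t$, $p\wedge t\sp{\perp}$, $p\sp{\perp}\wedge t$ the relevant pieces. Because $pCt$, we have $p\vee t=p+t-pt$ and the Peirce pieces of $t$ with respect to $p$ are $pt=tp=p\wedge t$ (diagonal) with zero off-diagonal part; hence $(p-t)\sp{2}=p-pt-tp+t=p+t-2pt$ commutes with $p$, so $|p-t|=((p-t)\sp{2})\sp{1/2}$ commutes with $p$ as well (the square root double-commutes with its argument, Theorem 2.2 of \cite{FSynap}). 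Therefore $q=|p-t|\in C(p)$, which is (i). This closes the cycle.

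The step I anticipate being the main obstacle is the bookkeeping in (v)$\Rightarrow$(vi), specifically verifying the chain of equalities $q=cp+c\sp{\perp}p\sp{\perp}=s\sp{\perp}p+sp\sp{\perp}=|p-s|$: the first two are immediate once $c,s\in P$ and $s=c\sp{\perp}$ are in hand, but confirming $q=|p-s|$ requires knowing $sCp$ so that $|p-s|$ can be computed by the spectral/idempotent calculus, and then matching $(p-s)\sp{2}=p+s-2ps$ against $q$ — which works because $ps=p\wedge s$ and, under condition (v), $q=s\sp{\perp}p+sp\sp{\perp}$ is itself a projection orthogonal decomposition whose square is $s\sp{\perp}p+sp\sp{\perp}$ again; I would reconcile these by observing $(p-s)\sp2 \le 1$, $(p-s)\sp2$ is a projection (being $p+s-2ps$ with $p,s$ commuting projections, equal to $(p\wedge s\sp\perp)\oplus(p\sp\perp\wedge s) = p\vee s - p\wedge s$), and this projection equals $q$. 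None of this is deep, but it is the most calculation-heavy link; everything else is a one- or two-line consequence of the CS-decomposition and the already-established commutativity and carrier lemmas.
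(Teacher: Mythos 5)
Your proof is correct and follows essentially the same route as the paper: Lemma \ref{lm:diag,offdiag,zero} for (i)$\Leftrightarrow$(ii), the generalized CS-decomposition (Theorem \ref{th:genCSdecomp}) to pass among (ii), (iii), (iv), the nonnegative-partial-symmetry argument for (iv)$\Rightarrow$(v), the computation $q=(p-s)^{2}$ giving $q=|p-s|$, and double commutation of $|p-t|$ with $p$ for (vii)$\Rightarrow$(i). The only bookkeeping repair needed is that in your cycle the step (v)$\Rightarrow$(vi) quotes (iii), which is not yet available as a hypothesis at that point; but it follows at once from (v), since $cs=cc^{\perp}=0$ makes the term $csk$ in Theorem \ref{th:genCSdecomp} vanish.
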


\begin{proof}

The equivalence (i) $\Leftrightarrow$ (ii) is Lemma \ref
{lm:diag,offdiag,zero} (ii). By Theorem \ref{th:genCSdecomp},
$q=c\sp{2}p+csk+s\sp{2}p\sp{\perp}$ where $csk=pqp\sp{\perp}
+p\sp{\perp}qp$ and $k\sp{2}=1$, from which (ii) $\Leftrightarrow$
(iii) and (iii) $\Leftrightarrow$ (iv) both follow.

Now we claim that (iv) $\Rightarrow$ (v). Indeed, assume (iv). Then,
since $cCs$, $0=c\sp{2}s\sp{2}=c\sp{2}(1-c\sp{2})=c\sp{2}-(c\sp{2})
\sp{2}$, so $c\sp{2}\in P$, whence $c$ is a partial symmetry with
$0\leq c$. Thus, $c$ is a projection, and by a similar argument, so
is $s$; moreover, $c=c\sp{2}=1-s\sp{2}=1-s$, so $c\sp{\perp}=s$,
and we have (iv) $\Rightarrow$ (v). Conversely, if (v) holds, then
$cs=cc\sp{\perp}=0$, so (iv) holds, and we have (iv) $\Leftrightarrow$
(v). Thus we have the mutual equivalence of conditions (i) through (v).

Assume (i). Then (iii), hence also (v) holds, whence $c,s\in P$, $c=
s\sp{\perp}$, and $q=c\sp{2}p+s\sp{2}p\sp{\perp}=cp+c\sp{\perp}p
\sp{\perp}=s\sp{\perp}p+sp\sp{\perp}=p-sp-ps+s=(p-s)\sp{2}$. Therefore,
$q=q\sp{1/2}=|p-s|$. This proves that (i) $\Rightarrow$ (vi).

Obviously, with $t=s$, (vi) $\Rightarrow$ (vii), and it is clear that
(vii) $\Rightarrow$ (i).
\end{proof}

A generalized CS-decomposition for the projection $q\sp{\perp}$ with
respect to $p$ is easily obtained from Theorem \ref{th:genCSdecomp}.

\begin{corollary} \label{co:genCSdecomp}
$q\sp{\perp}=s\sp{2}p+cs(-k)+c\sp{2}p\sp{\perp}$.
\end{corollary}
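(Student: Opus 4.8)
The plan is to derive the claimed decomposition of $q\sp{\perp}$ directly from the generalized CS-decomposition of $q$ (Theorem \ref{th:genCSdecomp}) together with the elementary identities $q\sp{\perp}=1-q$ and $c\sp{2}+s\sp{2}=1$ (Theorem \ref{th:sceffects} (v)). First I would write $q\sp{\perp}=1-q=1-(c\sp{2}p+csk+s\sp{2}p\sp{\perp})$ and substitute $1=p+p\sp{\perp}$, regrouping the ``diagonal'' terms to get $(p-c\sp{2}p)+(p\sp{\perp}-s\sp{2}p\sp{\perp})-csk=(1-c\sp{2})p+(1-s\sp{2})p\sp{\perp}-csk$. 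Using $1-c\sp{2}=s\sp{2}$ and $1-s\sp{2}=c\sp{2}$ this collapses to $s\sp{2}p+c\sp{2}p\sp{\perp}-csk=s\sp{2}p+cs(-k)+c\sp{2}p\sp{\perp}$, which is exactly the assertion.

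Strictly speaking one should also verify that the data appearing in the formula genuinely constitute the sine/cosine data and off-diagonal symmetry for $q\sp{\perp}$ with respect to $p$, so that the corollary is a bona fide CS-decomposition and not merely an algebraic rearrangement. Here I would invoke Theorem \ref{th:sceffects}: the cosine effect for $q\sp{\perp}$ with respect to $p$ is $(pq\sp{\perp}p+p\sp{\perp}qp\sp{\perp})\sp{1/2}=s$ and the sine effect is $(pqp+p\sp{\perp}q\sp{\perp}p\sp{\perp})\sp{1/2}=c$ (the roles of $c$ and $s$ simply swap when $q$ is replaced by $q\sp{\perp}$), so the coefficient $s\sp{2}$ of $p$ and $c\sp{2}$ of $p\sp{\perp}$ are correct, and the off-diagonal part $pq\sp{\perp}p\sp{\perp}+p\sp{\perp}q\sp{\perp}p=-(pqp\sp{\perp}+p\sp{\perp}qp)=cs(-k)$. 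Since $k$ is a symmetry, so is $-k$, and $-k$ double commutes with $pqp\sp{\perp}+p\sp{\perp}qp$ hence with its negative; moreover $cCs$, $cC(-k)$, $sC(-k)$ all follow from the corresponding facts for $k$ in Theorem \ref{th:genCSdecomp}.

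I do not anticipate any real obstacle: the entire content is the observation that passing from $q$ to $q\sp{\perp}$ interchanges $c$ and $s$ and negates the off-diagonal part, both of which are immediate from the defining formulas and $c\sp{2}+s\sp{2}=1$. The only point requiring a word of care is that $-k$ rather than $k$ is the correct symmetry, which is why the statement is phrased with $cs(-k)$; this is forced because $pq\sp{\perp}p\sp{\perp}+p\sp{\perp}q\sp{\perp}p=(1-q)$-off-diagonal $=-(pqp\sp{\perp}+p\sp{\perp}qp)$. Thus the proof is essentially a one-line substitution, and I would present it as such, citing Theorem \ref{th:genCSdecomp} and Theorem \ref{th:sceffects} (v).
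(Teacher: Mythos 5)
Your proof is correct and follows essentially the same route as the paper: the paper's proof is exactly the one-line substitution $q\sp{\perp}=1-q=p+p\sp{\perp}-c\sp{2}p-csk-s\sp{2}p\sp{\perp}=(1-c\sp{2})p+cs(-k)+(1-s\sp{2})p\sp{\perp}=s\sp{2}p+cs(-k)+c\sp{2}p\sp{\perp}$ using $c\sp{2}+s\sp{2}=1$. Your additional remarks identifying $s$ and $c$ as the cosine and sine effects of $q\sp{\perp}$ and $-k$ as the corresponding symmetry are accurate but go beyond what the corollary asserts.
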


\begin{proof}
We have $q\sp{\perp}=1-q=p+p\sp{\perp}-c\sp{2}p-csk-s\sp{2}p\sp{\perp}
=(1-c\sp{2})p+cs(-k)+(1-s\sp{2})p\sp{\perp}=s\sp{2}p+cs(-k)+c\sp{2}p\sp{\perp}$.
\end{proof}

\section{Generic position} \label{sc:Generic}

As an immediate consequence of Definitions \ref{df:generic} and
\ref{df:commutator}, we have the following.

\begin{lemma} \label{lm:genpos01}
$p$ and $q$ are in generic position iff $r=[p,q]=1$.
\end{lemma}

According to Theorem \ref{th:p,q,andr} (ii)--(v), the projections
$r\sb{p}, r\sb{p\sp{\perp}}, r\sb{q}$ and $r\sb{q\sp{\perp}}$ belong
to the lattice of projections $P[0,r]$ of the commutator algebra
$rAr$ of $p$ and $q$. In this section we are going to prove that
$r\sb{p}$ and $r\sb{q}$ are in generic position in $rAr$. We begin
with two preliminary lemmas, the first of which---an immediate
consequence of Theorem \ref{th:p,q,andr}---identifies $r\sb
{p\sp{\perp}}$ and $r\sb{q\sp{\perp}}$ as the orthocomplements
of $r\sb{p}$ and $r\sb{q}$ in $P[0,r]$.

\begin{lemma} \label{lm:meetwithr}
{\rm(i)} $r\sb{p}\sp{\perp\sb{r}}=r\sb{p}\!\sp{\perp}\wedge r=r\sb{p}
\!\sp{\perp}r=rr\sb{p}\!\sp{\perp}=r\sb{p\sp{\perp}}$. {\rm(ii)}
$r\sb{q}\sp{\perp\sb{r}}=r\sb{q}\!\sp{\perp}\wedge r=r\sb{q}\!\sp{\perp}r
=rr\sb{q}\!\sp{\perp}=r\sb{q\sp{\perp}}$.
\end{lemma}

\begin{lemma} \label{lm:disjointness}
$r\sb{p}\wedge r\sb{q}=r\sb{p}\wedge r\sb{q\sp{\perp}}=r\sb{p\sp{\perp}}
\wedge r\sb{q}=r\sb{p\sp{\perp}}\wedge r\sb{q\sp{\perp}}=0$.
\end{lemma}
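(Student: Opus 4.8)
The plan is to exploit the fact that each of $r_p$, $r_{p^\perp}$, $r_q$, $r_{q^\perp}$ is, by Definition~\ref{df:rsbp,etc}, a threefold meet, so it lies below each of its three conjuncts. Using two of the ``first conjunct'' inclusions to trap the product below $x\wedge y$ for a suitable $x,y\in\{p,p^\perp,q,q^\perp\}$, and then using a third conjunct of one of the factors (via De\,Morgan) to trap it also below $(x\wedge y)^\perp$, each of the four meets is forced to be $0$.

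Concretely, for $r_p\wedge r_q$ I would use $r_p\leq p$ and $r_q\leq q$ together with $r_p\leq p^\perp\vee q^\perp$ (the third conjunct of $r_p$). By monotonicity of $\wedge$ and the De\,Morgan law $p^\perp\vee q^\perp=(p\wedge q)^\perp$ this gives $r_p\wedge r_q\leq(p\wedge q)\wedge(p\wedge q)^\perp=0$. The remaining three cases are the same three-line argument with $q$ replaced by $q^\perp$ and/or $p$ by $p^\perp$: for $r_p\wedge r_{q^\perp}$ use $r_p\leq p$, $r_{q^\perp}\leq q^\perp$, and $r_p\leq p^\perp\vee q=(p\wedge q^\perp)^\perp$; for $r_{p^\perp}\wedge r_q$ use $r_{p^\perp}\leq p^\perp$, $r_q\leq q$, and $r_{p^\perp}\leq p\vee q^\perp=(p^\perp\wedge q)^\perp$; for $r_{p^\perp}\wedge r_{q^\perp}$ use $r_{p^\perp}\leq p^\perp$, $r_{q^\perp}\leq q^\perp$, and $r_{p^\perp}\leq p\vee q=(p^\perp\wedge q^\perp)^\perp$. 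In each case one reaches a meet $\leq x\wedge x^\perp=0$ in the OML $P$.

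Since every ingredient is elementary---the inclusions $r_p\leq p$, etc., read straight off Definition~\ref{df:rsbp,etc}, monotonicity of the lattice meet, the De\,Morgan laws recorded in Section~\ref{sc:OMLP}, and $x\wedge x^\perp=0$---I do not expect any genuine obstacle; the only thing needing care is the bookkeeping, i.e.\ choosing for each of the four products the conjunct of one factor whose De\,Morgan dual matches the meet already pinned down by the two ``first conjunct'' inclusions. (One could instead deduce the lemma from Theorem~\ref{th:rpandcommutativity}, noting that $r_p=p-((p\wedge q)\vee(p\wedge q^\perp))$ is orthogonal to $p\wedge q$ and to $p\wedge q^\perp$, and similarly for the others, but the direct argument above is shorter.)
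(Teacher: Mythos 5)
Your argument is correct and is essentially the paper's own proof: the paper likewise bounds $r\sb{p}\wedge r\sb{q}\leq p\wedge(p\sp{\perp}\vee q\sp{\perp})\wedge q=(p\wedge q)\wedge(p\wedge q)\sp{\perp}=0$ using the conjuncts of Definition \ref{df:rsbp,etc} and De\,Morgan, and disposes of the other three cases by symmetry. The only difference is cosmetic bookkeeping in how the three inclusions are grouped.
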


\begin{proof}
We have $r\sb{p}=p\wedge(p\sp{\perp}\vee q)\wedge(p\sp{\perp}\vee q\sp
{\perp})\leq p\wedge(p\sp{\perp}\vee q\sp{\perp})$ and $r\sb{q}=q
\wedge(p\vee q\sp{\perp})\wedge(p\sp{\perp}\vee q\sp{\perp})\leq q$,
whence
\[
r\sb{p}\wedge r\sb{q}\leq p\wedge(p\sp{\perp}\vee q\sp{\perp})\wedge q
 =(p\wedge q)\wedge(p\wedge q)\sp{\perp}=0,
\]
whence $r\sb{p}\wedge r\sb{q}=0$. The remaining equalities follow by
symmetry.
\end{proof}

\begin{theorem} \label{th:rpandrqinGP}
The projections $r\sb{p}=pr=rp=r\wedge p\in P[0,r]$ and $r\sb{q}=qr=rq=r
\wedge q\in P[0,r]$ are in generic position in the commutator algebra $rAr$.
\end{theorem}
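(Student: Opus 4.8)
The plan is to verify the four meet conditions in Definition~\ref{df:generic}, but computed \emph{inside} the commutator algebra $rAr$, for the pair $r\sb{p}$ and $r\sb{q}$. By Lemma~\ref{lm:genpos01} applied within $rAr$, it suffices to show that all four meets
\[
r\sb{p}\wedge\sb{r} r\sb{q},\quad r\sb{p}\wedge\sb{r} r\sb{q}\sp{\perp\sb{r}},\quad r\sb{p}\sp{\perp\sb{r}}\wedge\sb{r} r\sb{q},\quad r\sb{p}\sp{\perp\sb{r}}\wedge\sb{r} r\sb{q}\sp{\perp\sb{r}}
\]
vanish, where the meet in $P[0,r]$ agrees with the meet in $P$ since $P[0,r]$ is a sublattice (as recorded in the discussion of $rAr$ in Section~\ref{sc:OMLP}). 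By Lemma~\ref{lm:meetwithr}, the orthocomplements in $P[0,r]$ are exactly $r\sb{p}\sp{\perp\sb{r}}=r\sb{p\sp{\perp}}$ and $r\sb{q}\sp{\perp\sb{r}}=r\sb{q\sp{\perp}}$. So the four conditions become precisely
\[
r\sb{p}\wedge r\sb{q}=r\sb{p}\wedge r\sb{q\sp{\perp}}=r\sb{p\sp{\perp}}\wedge r\sb{q}=r\sb{p\sp{\perp}}\wedge r\sb{q\sp{\perp}}=0,
\]
and this is exactly the content of Lemma~\ref{lm:disjointness}.

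So the argument is essentially a bookkeeping assembly: first invoke the structure of $rAr$ as a synaptic algebra with unit $r$ and projection lattice $P[0,r]$; then quote Lemma~\ref{lm:meetwithr} to translate ``generic position of $r\sb{p}$ and $r\sb{q}$ in $rAr$'' into the four-meet condition in $P$ involving $r\sb{p}, r\sb{p\sp{\perp}}, r\sb{q}, r\sb{q\sp{\perp}}$; and finally apply Lemma~\ref{lm:disjointness} to conclude. I would also note in passing that $r\sb{p}, r\sb{q}\in P[0,r]$ with the stated identities $r\sb{p}=pr=rp=r\wedge p$ and $r\sb{q}=qr=rq=r\wedge q$ is just Theorem~\ref{th:p,q,andr}(viii), so the projections named in the statement genuinely live in the right lattice.

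The only subtlety worth being careful about — and the one place I would slow down rather than wave hands — is the equivalence ``$x$ and $y$ are in generic position in $rAr$'' $\iff$ ``the four meets of $x, x\sp{\perp\sb{r}}, y, y\sp{\perp\sb{r}}$ are $0$ in $P[0,r]$.'' This is just Definition~\ref{df:generic} read in the synaptic algebra $rAr$ (whose unit is $r$, not $1$), so there is nothing deep here, but one must make sure that the meets taken in $P[0,r]$ coincide with meets in $P$ (true, since $P[0,r]$ is a sublattice of $P$) and that $r\sb{p}\sp{\perp\sb{r}}$ really is $r\sb{p\sp{\perp}}$ and not $r\sb{p}\sp{\perp}=1-r\sb{p}$. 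Lemma~\ref{lm:meetwithr} settles precisely this point, so once it is cited the proof is immediate. I do not anticipate any genuine obstacle; the work has all been front-loaded into Lemmas~\ref{lm:meetwithr} and~\ref{lm:disjointness} and Theorem~\ref{th:p,q,andr}.
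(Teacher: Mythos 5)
Your proposal is correct and follows the same route as the paper, whose proof is exactly to combine Lemma \ref{lm:meetwithr} (identifying the orthocomplements in $P[0,r]$ as $r\sb{p\sp{\perp}}$ and $r\sb{q\sp{\perp}}$) with Lemma \ref{lm:disjointness} (the vanishing of the four meets). The extra care you take about meets in $P[0,r]$ agreeing with meets in $P$ is sound and merely makes explicit what the paper leaves implicit.
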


\begin{proof}
Combine Lemmas \ref{lm:meetwithr} and \ref{lm:disjointness}.
\end{proof}

\begin{corollary} \label{co:rpveerq}
$r\sb{p}\vee r\sb{q}=r\sb{p}\vee r\sb{q\sp{\perp}}=r\sb{p\sp{\perp}}
\vee r\sb{q}=r\sb{p\sp{\perp}}\vee r\sb{q\sp{\perp}}=[p,q]=r$.
\end{corollary}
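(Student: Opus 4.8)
The plan is to read this corollary directly off Theorem \ref{th:rpandrqinGP} together with the De\,Morgan form of the definition of generic position. By Theorem \ref{th:rpandrqinGP}, the projections $r\sb{p}$ and $r\sb{q}$ lie in $P[0,r]$ and are in generic position in the commutator algebra $rAr$, whose unit element is $r$ and whose orthocomplementation is $x\mapsto x\sp{\perp\sb{r}}=r-x$. Applying the second (equivalent) formulation in Definition \ref{df:generic} inside the OML $P[0,r]$ of projections of $rAr$ yields
\[
r\sb{p}\vee r\sb{q}=r\sb{p}\vee r\sb{q}\sp{\perp\sb{r}}=r\sb{p}\sp{\perp\sb{r}}\vee r\sb{q}=r\sb{p}\sp{\perp\sb{r}}\vee r\sb{q}\sp{\perp\sb{r}}=r,
\]
where the joins are computed in $P[0,r]$.

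Next I would observe that these joins coincide with the corresponding joins in $P$: since $r\sb{p},r\sb{q},r\sb{p\sp{\perp}},r\sb{q\sp{\perp}}\leq r$ by Theorem \ref{th:p,q,andr}, the element $r$ is an upper bound for each pair, so the least upper bound in $P$ already lies in $P[0,r]$ and agrees with the least upper bound taken there. Then I would invoke Lemma \ref{lm:meetwithr} to replace the relative orthocomplements by the projections in which we are actually interested, namely $r\sb{p}\sp{\perp\sb{r}}=r\sb{p\sp{\perp}}$ and $r\sb{q}\sp{\perp\sb{r}}=r\sb{q\sp{\perp}}$. Substituting these into the displayed chain of equalities gives
\[
r\sb{p}\vee r\sb{q}=r\sb{p}\vee r\sb{q\sp{\perp}}=r\sb{p\sp{\perp}}\vee r\sb{q}=r\sb{p\sp{\perp}}\vee r\sb{q\sp{\perp}}=r,
\]
and finally $r=[p,q]$ by Definition \ref{df:r}\,(1).

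There is essentially no obstacle here; the only point requiring a moment's care is the remark that joins in the sublattice $P[0,r]$ agree with joins in $P$ for elements bounded above by $r$, which is immediate. The substance of the corollary has already been done in Lemmas \ref{lm:meetwithr} and \ref{lm:disjointness} and Theorem \ref{th:rpandrqinGP}; this statement merely records the De\,Morgan dual of the disjointness relations established there.
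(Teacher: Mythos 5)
Your proposal is correct and follows exactly the route the paper intends: the corollary is stated as an immediate consequence of Theorem \ref{th:rpandrqinGP}, namely the De\,Morgan dual of generic position in the OML $P[0,r]$, with Lemma \ref{lm:meetwithr} identifying the relative orthocomplements and Definition \ref{df:r} giving $r=[p,q]$. Your extra remark that joins of elements below $r$ agree in $P[0,r]$ and in $P$ is a sound (and correctly justified) point of care.
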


In view of Theorem \ref{th:rpandrqinGP}, it seems natural to inquire about
the cosine and sine effects of $r\sb{q}$ with respect to $r\sb{p}$ as
calculated in $rAr$.

\begin{definition} \label{df:csinrAr}
\[
 c\sb{r}:=(r\sb{p}r\sb{q}r\sb{p}+r\sb{p\sp{\perp}}r\sb{q\sp{\perp}}
 r\sb{p\sp{\perp}})\sp{1/2}\text{\ and\ } s\sb{r}:=(r\sb{p}r\sb{q
 \sp{\perp}}r\sb{p}+r\sb{p\sp{\perp}}r\sb{q}r\sb{p\sp{\perp}})\sp{1/2}.
\]
\end{definition}

\begin{theorem} \label{th:csinrAr}
\
\begin{enumerate}
\item $c\sb{r}=cr=rc$ and $s\sb{r}=sr=rs$.
\item $c=c\sb{r}+|(p\wedge q)-(p\sp{\perp}\wedge q\sp{\perp})|$ and $s=
 s\sb{r}+|(p\wedge q\sp{\perp})-(p\sp{\perp}\wedge q)|$.
\end{enumerate}
\end{theorem}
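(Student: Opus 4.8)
The plan is to prove (i) first, and then to read off (ii) from it together with Theorem~\ref{th:carofc&s}. For part (i): since $cCr$ by Theorem~\ref{th:sceffects}(viii), the element $cr=rc$ is nonnegative and satisfies $cr=rcr$, so $cr\in rAr$. I square it: using $r\sp{2}=r$ and $cCr$ one gets $(cr)\sp{2}=c\sp{2}r=rc\sp{2}r$, and since $c\sp{2}=pqp+p\sp{\perp}q\sp{\perp}p\sp{\perp}$ by Theorem~\ref{th:sceffects}(i),
\[
(cr)\sp{2}=r(pqp)r+r(p\sp{\perp}q\sp{\perp}p\sp{\perp})r.
\]
Next I collapse each summand by pushing the outer $r$'s inward via Theorem~\ref{th:p,q,andr}: there $rp=pr=r\sb{p}$, $rp\sp{\perp}=p\sp{\perp}r=r\sb{p\sp{\perp}}$, $qr=rq=r\sb{q}$, $q\sp{\perp}r=rq\sp{\perp}=r\sb{q\sp{\perp}}$, together with $r\sb{p}r=r\sb{p}$ and $r\sb{p\sp{\perp}}r=r\sb{p\sp{\perp}}$. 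A routine rearrangement gives $r(pqp)r=r\sb{p}(rqr)r\sb{p}=r\sb{p}r\sb{q}r\sb{p}$ and likewise $r(p\sp{\perp}q\sp{\perp}p\sp{\perp})r=r\sb{p\sp{\perp}}r\sb{q\sp{\perp}}r\sb{p\sp{\perp}}$, so $(cr)\sp{2}$ is exactly the element whose positive square root defines $c\sb{r}$ in Definition~\ref{df:csinrAr}. Since $cr$ is a nonnegative element of the synaptic algebra $rAr$ with that square, uniqueness of positive square roots in $rAr$ forces $cr=c\sb{r}$; the identity $sr=s\sb{r}$ is obtained the same way, starting from $s\sp{2}=pq\sp{\perp}p+p\sp{\perp}qp\sp{\perp}$ (Theorem~\ref{th:sceffects}(ii)).

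For part (ii), note first that $r=(cs)\dg=c\dg s\dg$ by Theorem~\ref{th:carofc&s}(iii). Since $c$ commutes with both $c\dg$ and $s\dg$ (Theorem~\ref{th:carofc&s}(i)) and $cc\dg=c$, we get $cr=c\,c\dg s\dg=cs\dg$, hence
\[
c-c\sb{r}=c-cr=c-cs\dg=c(1-s\dg)=c\,{s\dg}\sp{\perp}={s\dg}\sp{\perp},
\]
the last step because ${s\dg}\sp{\perp}\leq c\sp{2}\leq c$ by Theorem~\ref{th:carofc&s}(iv). Now Theorem~\ref{th:carofc&s}(ii) and De\,Morgan give ${s\dg}\sp{\perp}=(p\wedge q)\vee(p\sp{\perp}\wedge q\sp{\perp})$; as $p\wedge q\leq p$ and $p\sp{\perp}\wedge q\sp{\perp}\leq p\sp{\perp}$ these two projections are orthogonal, so their join is their sum and $((p\wedge q)-(p\sp{\perp}\wedge q\sp{\perp}))\sp{2}=(p\wedge q)+(p\sp{\perp}\wedge q\sp{\perp})={s\dg}\sp{\perp}\in P$, whence ${s\dg}\sp{\perp}=|(p\wedge q)-(p\sp{\perp}\wedge q\sp{\perp})|$. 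Combined with $c\sb{r}=cr$ from part (i) this gives the first formula. The second follows verbatim with $q\sp{\perp}$ in place of $q$: $sr=sc\dg$, so $s-s\sb{r}=s\,{c\dg}\sp{\perp}={c\dg}\sp{\perp}$ by Theorem~\ref{th:carofc&s}(v), and ${c\dg}\sp{\perp}=(p\wedge q\sp{\perp})\vee(p\sp{\perp}\wedge q)=|(p\wedge q\sp{\perp})-(p\sp{\perp}\wedge q)|$ by the same orthogonality argument.

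The only step demanding genuine care is the collapsing computation in part (i)---checking that $r(pqp)r$ and $r(p\sp{\perp}q\sp{\perp}p\sp{\perp})r$ reduce precisely to $r\sb{p}r\sb{q}r\sb{p}$ and $r\sb{p\sp{\perp}}r\sb{q\sp{\perp}}r\sb{p\sp{\perp}}$ using the commutation identities of Theorem~\ref{th:p,q,andr}---together with the bookkeeping point that the square root defining $c\sb{r}$ is taken inside $rAr$, so it is uniqueness of square roots \emph{there} (not merely in $A$) that closes the argument. Beyond this there is no substantive obstacle.
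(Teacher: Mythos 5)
Your proof is correct, but in both parts it follows a genuinely different route from the paper. For (i), the paper applies the identity $c=|p-q\sp{\perp}|$ (Theorem \ref{th:sceffects} (iv)) \emph{inside} the commutator algebra $rAr$: using Lemma \ref{lm:meetwithr} to identify $r\sb{q\sp{\perp}}$ as the relative orthocomplement of $r\sb{q}$, it writes $c\sb{r}=|r\sb{p}-r\sb{q\sp{\perp}}|=|(p-q\sp{\perp})r|$ and then invokes multiplicativity of the absolute value for commuting elements to get $|(p-q\sp{\perp})r|=|p-q\sp{\perp}|\,r=cr$; you instead square $cr$, collapse $r(pqp)r$ and $r(p\sp{\perp}q\sp{\perp}p\sp{\perp})r$ to $r\sb{p}r\sb{q}r\sb{p}$ and $r\sb{p\sp{\perp}}r\sb{q\sp{\perp}}r\sb{p\sp{\perp}}$ via Theorem \ref{th:p,q,andr}, and finish by uniqueness of nonnegative square roots---equally valid, and it sidesteps having to transplant Theorem \ref{th:sceffects} (iv) into $rAr$. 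For (ii), the paper decomposes $r\sp{\perp}$ as the orthosum of the four ``uninteresting'' projections (Theorem \ref{th:unitisorthosum}), computes $pr\sp{\perp}$ and $q\sp{\perp}r\sp{\perp}$, and gets $cr\sp{\perp}=|(p-q\sp{\perp})r\sp{\perp}|=|(p\wedge q)-(p\sp{\perp}\wedge q\sp{\perp})|$, so that $c=cr+cr\sp{\perp}$; you instead use $r=c\dg s\dg$ and $cc\dg=c$ to write $c-cr=c\,{s\dg}\sp{\perp}$, absorb via ${s\dg}\sp{\perp}\leq c$ (here you should make explicit that you are also using the standing fact from Section \ref{sc:OMLP} that a projection $p$ below an effect $e$ satisfies $p=pe=ep$; Theorem \ref{th:carofc&s} (iv) alone gives only $c\sp{2}{s\dg}\sp{\perp}={s\dg}\sp{\perp}$), and then identify ${s\dg}\sp{\perp}=(p\wedge q)\vee(p\sp{\perp}\wedge q\sp{\perp})=|(p\wedge q)-(p\sp{\perp}\wedge q\sp{\perp})|$ by De\,Morgan and orthogonality. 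The two computations are of course consistent, since $cr\sp{\perp}=c(1-c\dg s\dg)=c-cs\dg=c\,{s\dg}\sp{\perp}$; the paper's version buys shorter arithmetic from the polar formulas $c=|p-q\sp{\perp}|$, $s=|p-q|$, while yours leans on the carrier machinery of Theorem \ref{th:carofc&s} and stays closer to the definitions of $c\sb{r}$ and $s\sb{r}$.
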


\begin{proof}
(i) By Theorem \ref{th:sceffects}, we have $c\sb{r}=|r\sb{p}-r\sb{q\sp{\perp}}|$
and $s\sb{r}=|r\sb{p}-r\sb{q}|$. Thus, $c\sb{r}=|pr-q\sp{\perp}r|=|(p-q
\sp{\perp})r|$ and as $(p-q\sp{\perp})Cr$ and $0\leq r$, it follows that
$c\sb{r}=|p-q\sp{\perp}||r|=|p-q\sp{\perp}|r=cr=rc$. Similarly, $s\sb{r}=
|pr-qr|=|(p-q)r|=|p-q|r=sr=rs$.

(ii) As $r\sp{\perp}=(p\wedge q)\vee(p\wedge q\sp{\perp})\vee(p\sp{\perp}
\wedge q)\vee (p\sp{\perp}\wedge q\sp{\perp})=(p\wedge q)+(p\wedge q
\sp{\perp})+(p\sp{\perp}\wedge q)+(p\sp{\perp}\wedge q\sp{\perp})$, it follows
that $pr\sp{\perp}=(p\wedge q)+(p\wedge q\sp{\perp})$ and $q\sp{\perp}r
\sp{\perp}=(p\wedge q\sp{\perp})+(p\sp{\perp}\wedge q\sp{\perp})$. Thus,
$cr\sp{\perp}=|p-q\sp{\perp}|r\sp{\perp}=|pr\sp{\perp}-q\sp{\perp}r\sp{\perp}|
=|(p\wedge q)+(p\wedge q\sp{\perp})-(p\wedge q\sp{\perp})-(p\sp{\perp}
\wedge q\sp{\perp})|=|(p\wedge q)-(p\sp{\perp}\wedge q\sp{\perp})|$. Therefore,
$c=cr+cr\sp{\perp}=c\sb{r}+|(p\wedge q)-(p\sp{\perp}\wedge q\sp{\perp})|$.
A similar calculation yields $s=s\sb{r}+|(p\wedge q\sp{\perp})-(p\sp{\perp}
\wedge q)|$.
\end{proof}

\section{Dropping down to the commutator algebra} \label{sc:dropdown}

If $pCq$, then $r=0$ and by Theorems \ref{th:equivcomconds} and \ref
{th:rp,etc}, $p,q,p\sp{\perp},$ and $q\sp{\perp}$ can be expressed in
terms of Halmos' four ``thoroughly uninteresting" projections, essentially
concluding our study of $p$ and $q$.

Using parts (i) and (iii) of Theorem \ref{th:rp,etc}, part (ii) of
Theorem \ref{th:csinrAr}, and Halmos' four uninteresting projections,
we can translate properties of $r\sb{p}$, $r\sb{q},$ $c\sb{r},$ and
$s\sb{r}$ into properties of $p$, $q$, $c$, and $s$; hence these theorems
reduce the study of the two projections $p$ and $q$ in the synaptic algebra
$A$ to the study of the two projections $r\sb{p}$ and $r\sb{q}$, which by
Theorem \ref{th:rpandrqinGP} are in generic position in the commutator
algebra $rAr$ of $p$ and $q$. As we are going to assume that $p$ does not
commute with $q$, i.e., $r\not=0$, Corollary \ref{co:commute} will imply
that $r\sb{p}$ does not commute with $r\sb{q}$. Thus, we propose to drop down
from $A$ to the nondegenerate commutator algebra $rAr$ and focus on the study
of $r\sb{p}$ and $r\sb{q}$ in $rAr$. Consequently, to simplify notation, we
shall now replace the synaptic algebra $rAr$ by $A$ and replace $r\sb{p}$ and
$r\sb{q}$ by $p$ and $q$, respectively. Notice that this is exactly what was
done by B\"{o}ttcher and Spitkovsky \cite[p. 1414]{Guide}.

\begin{assumption} \label{as:genericposition}
In what follows, we assume that the two projections $p$ and $q$ are in
generic position in the nondegenerate synaptic algebra $A$, i.e.,
\[
p\wedge q=p\wedge q\sp{\perp}=p\sp{\perp}\wedge q=p\sp{\perp}
 \wedge q\sp{\perp}=0, \text{\ and}
\]
\[
p\vee q=p\vee q\sp{\perp}=p\sp{\perp}\vee q=p\sp{\perp}\vee q
 \sp{\perp}=1\not=0.
\]
\end{assumption}

As a consequence of Assumption \ref{as:genericposition}, $p=r\sb{p}$,
$p\sp{\perp}=r\sb{p\sp{\perp}}$, $q=r\sb{q}$, $q\sp{\perp}=
r\sb{q\sp{\perp}}$, and $r=[p,q]=1$, so \emph{we shall have no further
use for $r\sb{p}$, $r\sb{p\sp{\perp}}$, $r\sb{q}$, $r\sb
{q\sp{\perp}}$, $r$, and $[p,q]$}.

Notice that $p$ and $q$ are complements---but not
orthocomplements---in the OML $P$. Likewise for $p$ and $q\sp{\perp}$,
$p\sp{\perp}$ and $q$, and $p\sp{\perp}$ and $q\sp{\perp}$.

\begin{lemma} \label{lm:pqpcarrier}
$(pqp)\dg=(pq\sp{\perp}p)\dg=p,\ $ $(p\sp{\perp}qp\sp{\perp})\dg=(p
\sp{\perp}q\sp{\perp}p\sp{\perp})\dg=p\sp{\perp},\ $ $(qpq)\dg=(qp
\sp{\perp}q)\dg=q,$\ and\  $(q\sp{\perp}pq\sp{\perp})\dg=(q\sp{\perp}p
\sp{\perp}q\sp{\perp})\dg=q\sp{\perp}.$
\end{lemma}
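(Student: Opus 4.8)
The plan is to use Lemma~\ref{lm:carofpqp}(ii) together with the generic-position hypothesis (Assumption~\ref{as:genericposition}). Lemma~\ref{lm:carofpqp}(ii) gives us the formula $(pqp)\dg = p\wedge(p\sp{\perp}\vee q)$ directly. Under generic position we have $p\sp{\perp}\vee q = 1$, so $(pqp)\dg = p\wedge 1 = p$. The same lemma applied with $q$ replaced by $q\sp{\perp}$ yields $(pq\sp{\perp}p)\dg = p\wedge(p\sp{\perp}\vee q\sp{\perp}) = p\wedge 1 = p$. This handles the first two carriers.

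For the remaining six, I would exploit the symmetry of the generic-position hypothesis under the substitutions $p\leftrightarrow p\sp{\perp}$, $q\leftrightarrow q\sp{\perp}$, and $p\leftrightarrow q$ (all of which preserve Assumption~\ref{as:genericposition}). Concretely: applying Lemma~\ref{lm:carofpqp}(ii) with $p$ replaced by $p\sp{\perp}$ gives $(p\sp{\perp}qp\sp{\perp})\dg = p\sp{\perp}\wedge(p\vee q) = p\sp{\perp}\wedge 1 = p\sp{\perp}$, and with $p$ replaced by $p\sp{\perp}$ and $q$ by $q\sp{\perp}$ gives $(p\sp{\perp}q\sp{\perp}p\sp{\perp})\dg = p\sp{\perp}\wedge(p\vee q\sp{\perp}) = p\sp{\perp}$. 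Likewise, interchanging the roles of $p$ and $q$ in the lemma gives $(qpq)\dg = q\wedge(q\sp{\perp}\vee p) = q$, $(qp\sp{\perp}q)\dg = q\wedge(q\sp{\perp}\vee p\sp{\perp}) = q$, $(q\sp{\perp}pq\sp{\perp})\dg = q\sp{\perp}\wedge(q\vee p) = q\sp{\perp}$, and $(q\sp{\perp}p\sp{\perp}q\sp{\perp})\dg = q\sp{\perp}\wedge(q\vee p\sp{\perp}) = q\sp{\perp}$. In each case the three-fold join reduces to $1$ precisely because one of the four generic-position join-equalities $p\vee q = p\vee q\sp{\perp} = p\sp{\perp}\vee q = p\sp{\perp}\vee q\sp{\perp} = 1$ applies.

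There is essentially no obstacle here: the statement is an immediate corollary of Lemma~\ref{lm:carofpqp}(ii) once one observes that generic position makes every relevant join equal to $1$. The only thing to be careful about is making sure Lemma~\ref{lm:carofpqp} is genuinely applicable, i.e.\ that we are taking carriers of elements of the form $J\sb{r}a$ with $0\le a$; each of $pqp$, $pq\sp{\perp}p$, $p\sp{\perp}qp\sp{\perp}$, etc.\ is of this form since $q, q\sp{\perp}, p, p\sp{\perp}$ are all projections and hence nonnegative. Thus I would simply write out the eight applications of the lemma, in each case citing the appropriate one of the four generic-position identities, and note that the meet of a projection with $1$ is that projection.
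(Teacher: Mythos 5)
Your proposal is correct and follows exactly the paper's own argument: apply Lemma~\ref{lm:carofpqp}(ii) (and its variants under the substitutions $p\leftrightarrow p\sp{\perp}$, $q\leftrightarrow q\sp{\perp}$, $p\leftrightarrow q$) and use the generic-position identities to reduce each join to $1$. The paper simply states the first case and says the rest follow similarly, so your write-up is just a more explicit version of the same proof.
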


\begin{proof}
By Lemma \ref{lm:carofpqp} (ii), $(pqp)\dg=p\wedge(p\sp{\perp}\vee q)=
p\wedge 1=p$, and the remaining formulas follow similarly.
\end{proof}

\begin{theorem}
The symmetries $u$ and $v$ {\rm(Definition \ref{df:symsuvk})} satisfy
the following conditions{\rm:}
\begin{enumerate}
\item $u$ exchanges $p$ and $q$ as well as $p\sp{\perp}$ and $q\sp{\perp}$.
\item $v$ exchanges $p$ and $q\sp{\perp}$ as well as $q$ and $p\sp{\perp}$.
\end{enumerate}
\end{theorem}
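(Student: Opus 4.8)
The plan is to prove each part by showing the relevant square of the polar-decomposition symmetry is the identity, using that $p$ and $q$ are complements (generic position) to force the ``remainder'' projections to vanish, and then invoking the results of Section~\ref{sc:genCS}. First I would handle part (i). Recall from Theorem~\ref{th:6.5FPSynap}(ii) and Lemma~\ref{lm:uvProps}(i) that $u(p\wedge(p\sp{\perp}\vee q))u=q\wedge(q\sp{\perp}\vee p)$; under Assumption~\ref{as:genericposition} we have $p\sp{\perp}\vee q=1$ and $q\sp{\perp}\vee p=1$, so this reads $upu=q$, i.e.\ $u$ exchanges $p$ and $q$. The accompanying relations $uqu=p$, $up\sp{\perp}u=q\sp{\perp}$, $uq\sp{\perp}u=p\sp{\perp}$ then follow automatically from the definition of ``exchanged by a symmetry'' given in Section~\ref{sc:OMLP}, since $u\sp{2}=1$.

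For part (ii) I would argue by the same route with $q$ replaced by $q\sp{\perp}$: Lemma~\ref{lm:uvProps}(ii) gives $v(p\wedge(p\sp{\perp}\vee q\sp{\perp}))v=q\sp{\perp}\wedge(q\vee p)$, and generic position makes both outer meets trivial ($p\sp{\perp}\vee q\sp{\perp}=1$ and $q\vee p=1$), so $vpv=q\sp{\perp}$. Hence $v$ exchanges $p$ and $q\sp{\perp}$, and therefore also $q$ with $p\sp{\perp}$, $p\sp{\perp}$ with $q$, and $q\sp{\perp}$ with $p$, again by the symmetric-exchange package. Alternatively one could observe that applying part (i) to the pair $(p,q\sp{\perp})$ — which is again in generic position — yields exactly the polar-decomposition symmetry of $p-q$ (note $p-(q\sp{\perp})\sp{\perp}=p-q$), so part (ii) is literally part (i) for $(p,q\sp{\perp})$; I would mention this as the clean way to avoid repeating the computation.

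The only real subtlety is bookkeeping: one must make sure that the symmetry called $u$ in Definition~\ref{df:symsuvk}(1) is the one appearing in Theorem~\ref{th:6.5FPSynap} and Lemma~\ref{lm:uvProps}(i), i.e.\ the polar-decomposition symmetry of $p-q\sp{\perp}=p+q-1$, and similarly that $v$ from Definition~\ref{df:symsuvk}(2) is the polar symmetry of $p-q$ used in Lemma~\ref{lm:uvProps}(ii). Both identifications are immediate from the definitions, so there is no genuine obstacle here — the proof is essentially a one-line specialization of Lemma~\ref{lm:uvProps} to the generic-position hypothesis. I expect the write-up to be just a few lines invoking Lemma~\ref{lm:uvProps} and Assumption~\ref{as:genericposition}.
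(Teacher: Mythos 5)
Your proposal is correct and follows the paper's own argument exactly: the paper likewise specializes Lemma \ref{lm:uvProps} to the generic-position hypothesis of Assumption \ref{as:genericposition} (so that the joins $p\sp{\perp}\vee q$, $q\sp{\perp}\vee p$, $p\sp{\perp}\vee q\sp{\perp}$, $q\vee p$ all equal $1$), obtaining $upu=q$ and $vpv=q\sp{\perp}$, with the remaining exchange relations $up\sp{\perp}u=q\sp{\perp}$ and $vqv=p\sp{\perp}$ following automatically. Your extra remark that part (ii) is part (i) applied to the pair $(p,q\sp{\perp})$ is a harmless additional observation, not a departure from the paper's route.
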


\begin{proof}
In Lemma \ref{lm:uvProps} we have $p\vee q=p\vee q\sp{\perp}=p\sp{\perp}
\vee q=p\sp{\perp}\vee q\sp{\perp}=1$ and it follows that $upu=q$ and
$vpv=q\sp{\perp}$, whence $up\sp{\perp}u=q\sp{\perp}$ and $vqv=p\sp{\perp}$.
\end{proof}

\begin{definition} \label{df:j,ell}
$j:=uvp+pvu$ and $\ell:=2p-1$.
\end{definition}

\begin{theorem} \label{th:j,ellProps}
\
\begin{enumerate}
\item $j$ is a symmetry in $A$ exchanging $p$ and $p\sp{\perp}$.
\item $j$ commutes with both $s$ and $c$.
\item $j=pj+jp$.
\item $\ell=2p-1=p-p\sp{\perp}=cu+sv$ is a symmetry that commutes with $p,c,$
 and $s$.
\end{enumerate}
\end{theorem}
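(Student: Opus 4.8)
The plan is to prove the four parts roughly in the order (iv), (i), (iii), (ii), since the symmetry $\ell$ is by far the easiest object and the computations for $j$ rest on facts about $u$, $v$, and the sine/cosine effects that are already in hand. For part (iv): the identity $2p-1 = p-p^{\perp}$ is immediate from $p^{\perp}=1-p$, and $\ell^{2}=(2p-1)^{2}=4p^{2}-4p+1=1$ since $p^{2}=p$, so $\ell$ is a symmetry. It commutes with $p$ trivially, and it commutes with $c$ and $s$ because $cCp$, $sCp$ (Theorem \ref{th:sceffects} (viii)) and hence $c,s$ commute with any polynomial in $p$. For the formula $\ell = cu+sv$: by Definition \ref{df:symsuvk}, $cu = p-q^{\perp}=p+q-1$ and $sv = p-q$; adding gives $cu+sv = (p+q-1)+(p-q)=2p-1=\ell$, so that part is just bookkeeping.

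For part (i), I would first show $j$ is self-adjoint in the relevant sense — here that just means $j\in A$, which follows since $uvp+pvu = (uvp)+(uvp)^{\ast}$-type expressions; more concretely, $j$ is visibly of the form $a+a'$ with $a=uvp$, and the paper has already noted that $uv\cdot p + p\cdot vu$ lands in $A$ (it is the ``diagonal-symmetrized'' combination $xy + (xy)^{\text{rev}}$ pattern established in Section \ref{sc:OMLP}). To see $j^{2}=1$, expand $j^{2}=(uvp+pvu)(uvp+pvu)$; the cross terms and square terms should collapse using $u^{2}=v^{2}=1$, $upu=q$, $vpv=q^{\perp}$, $uqu=p$, $vq^{\perp}v=p$, together with $p^{2}=p$. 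The key simplification is $vuvu$ and $uvuv$: I expect $uv p vu\cdot uv p vu$-type products to telescope because $uv$ conjugates $p$ to $uvpv^{-1}u^{-1}=u q^{\perp} u = $ (using $uqu=p$ and $u$ exchanging $p^{\perp},q^{\perp}$) $=p^{\perp}$, so $uv$ exchanges $p$ and $p^{\perp}$ by conjugation, i.e. $(uv)p(uv)^{-1}=p^{\perp}$. That single fact is really the engine: it gives $j = uvp+pvu = (uv)p(vu) \cdot(\text{something})$, and squaring becomes a short computation. From $(uv)p(vu)=p^{\perp}$ and its companion $(uv)p^{\perp}(vu)=p$ one reads off $jpj=p^{\perp}$, i.e. $j$ exchanges $p$ and $p^{\perp}$.

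Part (iii), $j=pj+jp$, is the assertion that $j$ is ``off-diagonal'' with respect to $p$, i.e. $pjp = p^{\perp}jp^{\perp}=0$; equivalently the diagonal part of $j$ vanishes. This should follow directly from (i): a symmetry that exchanges $p$ and $p^{\perp}$ satisfies $pj = jp^{\perp}$ and $p^{\perp}j = jp$ (conjugate the relation $jpj=p^{\perp}$ by $j$), so $pjp = jp^{\perp}p = 0$ and likewise $p^{\perp}jp^{\perp}=0$; adding the Peirce pieces gives $j = pjp^{\perp}+p^{\perp}jp = pj+jp$. For part (ii), $jCs$ and $jCc$: since $C(c)=C(s)$ (Theorem \ref{th:sceffects} (vii)) it suffices to show $jCc$. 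Now $j = uvp+pvu$, and by Lemma \ref{lm:uvkCsc} both $u$ and $v$ commute with $c$; also $pCc$. A product and sum of elements each commuting with $c$ need not a priori commute with $c$ in $R$, but here all the relevant factors lie in $C(c)$, and $C(c)$ is closed under the operations that produce $j$ from $u,v,p$ — or, more safely, one invokes $c\dg\in CC(c)$ reasoning as in Lemma \ref{lm:uvkCsc}: $c$ commutes with $u$, $v$, $p$, hence $c\in C(uvp)$ and $c\in C(pvu)$, hence $cCj$.

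The step I expect to be the genuine obstacle is establishing $j^{2}=1$ cleanly, because it requires the conjugation identity $(uv)p(uv)^{-1}=p^{\perp}$ (equivalently $uvpvu = p^{\perp}$), which in turn leans on the two exchange theorems for $u$ and $v$ and on $u^2=v^2=1$; getting the order of conjugations right — $u$ exchanges $p\leftrightarrow q$ and $p^{\perp}\leftrightarrow q^{\perp}$, $v$ exchanges $p\leftrightarrow q^{\perp}$ and $q\leftrightarrow p^{\perp}$ — is the only place a sign or a swap can go wrong. Everything downstream (parts (iii), (ii), and the $\ell$-computations in (iv)) is then formal.
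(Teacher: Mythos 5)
Your proposal is correct and follows essentially the same route as the paper: the paper also proves (i) by rewriting $x:=uvp=uq\sp{\perp}v=p\sp{\perp}uv$ and $y:=pvu=vup\sp{\perp}$ via the exchange relations $upu=q$, $vpv=q\sp{\perp}$ (your conjugation identity $uvpvu=p\sp{\perp}$ in disguise), so that $x\sp{2}=y\sp{2}=0$, $xy=p\sp{\perp}$, $yx=p$, giving $j\sp{2}=1$ and $jpj=p\sp{\perp}$, and it proves (ii) and (iv) exactly as you do. The only cosmetic difference is in (iii), where the paper multiplies $1=jpj+p$ by $j$ instead of using the Peirce decomposition, and your worry in (ii) is unfounded since elements commuting with $c$ in $R$ automatically have their products and sums commute with $c$.
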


\begin{proof}
(i) Put $x:=uvp\in R$ and $y:=pvu\in R$. Then $j=x+y\in A$. As
$upu=q$, it follows that $up=upu\sp{2}=qu$. Likewise,
$uq\sp{\perp}=p\sp{\perp}u$, and therefore $x=uvp=uq\sp{\perp}v=
p\sp{\perp}uv$. Similarly, $y=pvu=vq\sp{\perp}u=vup\sp{\perp}$.
Consequently, $x\sp{2}=y\sp{2}=0$, $xy=p\sp{\perp}uvvup\sp{\perp}=
p\sp{\perp}$, and $yx=pvuuvp=p$; hence $j\sp{2}=(x+y)\sp{2}=x\sp{2}
+xy+yx+y\sp{2}=p\sp{\perp}+p=1$, so $j$ is a symmetry in $A$. Moreover,
$xp=x$ and $yp=0$, so $jpj=(x+y)p(x+y)=(xp+yp)(x+y)=x(x+y)=x\sp{2}+xy
=p\sp{\perp}$.

(ii) By Definition \ref{df:symsuvk} (2), $s$ commutes with $v$, by
Lemma \ref{lm:uvkCsc}, $s$ commutes with $u$, by Theorem \ref{th:sceffects}
(viii), $s$ commutes with $p$, and it follows that $s$ commutes with
$j=uvp+pvu$. A similar argument shows that $c$ commutes with $j$.

(iii) As $1=p\sp{\perp}+p=jpj+p$, it follows that $j=j\sp{2}pj+jp
=pj+jp$.

(iv) Evidently, $cu+sv=(p-q\sp{\perp})+(p-q)=2p-1=\ell$ and $(2p-1)
\sp{2}=4p-4p+1=1$, so $\ell$ is a symmetry. Obviously, $\ell\in
C(p)\cap C(c)\cap C(s)$.
\end{proof}

\begin{example} \label{ex:matrices3}
{\rm In the rank 2 synaptic algebra in Example \ref{ex:matrices2}, the
projections $p$ and $q$ are in generic position. For the symmetries $u$,
$v$, $j$, and $\ell$, we have}
\[
u=\left[\begin{array}{lr}\cos\,\theta & \sin\,\theta\\ \sin\,\theta &
-\cos\,\theta\end{array}\right],\ v=\left[\begin{array}{lr}\sin\,\theta &
-\cos\,\theta\\ -\cos\,\theta & -\sin\,\theta\end{array}\right],
\]
\[
j=\left[\begin{array}{lr}0 & 1\\ 1 & 0\end{array}\right]\text{\ and\ }
\ell=\left[\begin{array}{lr}1 & 0\\ 0 & -1\end{array}\right].
\]
\end{example}

\begin{lemma} \label{lm:uv+vu}
\
\begin{enumerate}
\item $c\dg=s\dg=(cs)\dg=(csj)\dg=1$.
\item $uv+vu=0$.
\item $j=k$.
\end{enumerate}
\end{lemma}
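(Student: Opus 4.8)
The plan is to establish the three parts in sequence, each feeding the next. For part (i), I would invoke the generic-position hypothesis (Assumption \ref{as:genericposition}) together with the carrier formulas already proved: by Theorem \ref{th:carofc&s}(ii), $c\dg=(p\vee q\sp{\perp})\wedge(p\sp{\perp}\vee q)=1\wedge 1=1$ and $s\dg=(p\vee q)\wedge(p\sp{\perp}\vee q\sp{\perp})=1\wedge 1=1$, and then $(cs)\dg=c\dg\wedge s\dg=1$ by Theorem \ref{th:carofc&s}(iii) (this also equals $r=[p,q]=1$, consistent with the standing assumption). For $(csj)\dg$, since $j$ is a symmetry it has carrier $1$, and $csj$ has the same carrier as $cs$: one way is to note $j\in CC(pqp\sp{\perp}+p\sp{\perp}qp)$ is invertible (being a symmetry), so $(csj)\dg=(cs)\dg=1$; alternatively, multiply by $j$ again to recover $cs$, so the carriers coincide.

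For part (ii), the idea is to exploit that $u$ exchanges $p$ with $q$ while $v$ exchanges $p$ with $q\sp{\perp}=1-q$. From the previous (unnumbered) theorem, $upu=q$ and $vpv=q\sp{\perp}$, hence $up=qu$, $vp=q\sp{\perp}v$, and also $up\sp{\perp}=q\sp{\perp}u$, $vp\sp{\perp}=qv$. These intertwining relations let me compute $uvp$ and $vup$ as in the proof of Theorem \ref{th:j,ellProps}(i): there we found $uvp=uq\sp{\perp}v=p\sp{\perp}uv$ and $pvu=vq\sp{\perp}u=vup\sp{\perp}$. I would use these to show $uv$ anticommutes with $\ell=2p-1$, or more directly: $(uv)p=p\sp{\perp}(uv)$ and $(vu)p=?$—actually the cleanest route is $uv\cdot p = p\sp{\perp}\cdot uv$ means $uv$ swaps $p$ and $p\sp{\perp}$ under conjugation, i.e.\ $(uv)p(uv)\sp{-1}=p\sp{\perp}$; but $uv$ need not be a symmetry. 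Instead I would square $uv+vu$: using $x=uvp$, $y=pvu$ from Definition \ref{df:j,ell} and the relations $x\sp2=y\sp2=0$, $xy=p\sp{\perp}$, $yx=p$ already established, note $uv+vu$ and $j=x+y$ differ by off-diagonal-versus-full terms; actually $uv = uvp + uvp\sp{\perp} = x + uvp\sp{\perp}$, and $uvp\sp{\perp}=uq\cdot v\cdot \dots$—this is getting intricate. The honest approach: compute $(uv+vu)$ directly against $p$ and $p\sp{\perp}$ using the four intertwining identities to show it sends $p\mapsto -$(something) and conclude $uv+vu$ commutes with nothing forces it to $0$; or, most efficiently, observe $(uv+vu)\sp2 = uvuv + uvvu + vuuv + vuvu = uvuv + u^2 + v^2 + vuvu = uvuv + vuvu + 2\cdot 1$, and separately $(uv-vu)\sp2 = uvuv + vuvu - 2$, so it suffices to show $uvuv+vuvu = -2\cdot 1$, equivalently $(uvuv+1)+(vuvu+1)=0$ with both summands positive multiples—hmm. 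The main obstacle is precisely nailing down $uv+vu=0$ cleanly; I expect the slick proof uses $\ell = cu+sv$ (Theorem \ref{th:j,ellProps}(iv)) together with $c\dg=s\dg=1$ and the commutation $cCu$, $sCv$, $uCs$, $vCc$ to expand $\ell\sp2=1$ as $c\sp2 u\sp2 + cs(uv+vu) + s\sp2 v\sp2 = c\sp2 + s\sp2 + cs(uv+vu) = 1 + cs(uv+vu)$, forcing $cs(uv+vu)=0$; then since $(cs)\dg=1$ by part (i), left-multiplying (or using the carrier-annihilation property $(cs)\dg x = 0 \Rightarrow x=0$ when the carrier is $1$) gives $uv+vu=0$. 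That is the key step.

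For part (iii), $j=k$, I would compute $csj$ and show it equals $pqp\sp{\perp}+p\sp{\perp}qp = csk$, then cancel using $(cs)\dg=1$. From Definition \ref{df:j,ell}, $j = uvp+pvu$, so $csj = cs\,uvp + cs\,pvu$. Using $cu = p+q-1 = cs$-compatible factorizations—actually $cu = uc = p-q\sp{\perp}$ and $sv = vs = p-q$ from Definition \ref{df:symsuvk}, together with $cCv$, $sCu$ (Lemma \ref{lm:uvkCsc})—I get $cs\,uv = (cu)(sv)\cdot(\text{reorder}) = (p-q\sp{\perp})(p-q)$ after using commutations to move $c$ past $v$ and $s$ past $u$, and similarly $cs\,vu = (sv)(cu) = (p-q)(p-q\sp{\perp})$, being careful with the ordering. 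Then $csj = csuvp + csvup = (p-q\sp{\perp})(p-q)p + p(p-q)(p-q\sp{\perp})$; expanding $(p-q\sp{\perp})(p-q)p = (p - qp - q\sp{\perp}p + q\sp{\perp}qp)p$—wait $q\sp{\perp}q=0$—$= (p-qp-q\sp{\perp}p)p$; since $q\sp{\perp}=1-q$, $q\sp{\perp}p = p - qp$, so $p - qp - q\sp{\perp}p = p-qp-p+qp = 0$?? That can't be right, so I must keep the $p$ on the correct side: $(p-q\sp{\perp})(p-q) = p - pq - q\sp{\perp}p + q\sp{\perp}q = p-pq-q\sp{\perp}p+0 = p - pq - (1-q)p = p-pq-p+qp = qp - pq$. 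Then $(qp-pq)p = qp - pqp$. Hmm, I'd want this to be $p\sp{\perp}qp$ times something. Since $csuvp$ should land in the $p\sp{\perp}(\cdot)p$ corner (recall $uvp = p\sp{\perp}uv$), let me instead write $csuvp = cs\,p\sp{\perp}uv = p\sp{\perp}\,cs\,uv = p\sp{\perp}(qp-pq) = p\sp{\perp}qp$ (since $p\sp{\perp}pq=0$). Symmetrically $cs\,pvu = cs\,p\,vu = p\,cs\,vu = p(pq-qp) = pqp\sp{\perp}$ (using $cs\,vu = -cs\,uv = pq-qp$ from part (ii) — this is where part (ii) is used — and $pq - pqp = pq(1-p) = pqp\sp{\perp}$, while $pqp\cdot$—wait $p\cdot qp = pqp$ has a $p$ on the right, so $p(pq-qp)=p^2q - pqp = pq-pqp = pqp\sp{\perp}$). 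Therefore $csj = p\sp{\perp}qp + pqp\sp{\perp} = pqp\sp{\perp}+p\sp{\perp}qp = csk$ by Definition \ref{df:symsuvk}(3). Finally $cs(j-k)=0$ and $(cs)\dg=1$ give $j-k=0$, i.e.\ $j=k$. The anticipated obstacle here is bookkeeping the left/right placement of $p$ versus $p\sp{\perp}$ and the sign from part (ii); once part (ii) is in hand, part (iii) is a direct (if fiddly) computation closed off by the carrier-is-$1$ cancellation.
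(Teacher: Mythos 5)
Your proposal is correct and follows essentially the same route as the paper: part (i) from the carrier formulas $c\dg=(p\vee q\sp{\perp})\wedge(p\sp{\perp}\vee q)$, $s\dg=(p\vee q)\wedge(p\sp{\perp}\vee q\sp{\perp})$ under generic position, part (ii) by expanding $\ell\sp{2}=(cu+sv)\sp{2}=1+cs(uv+vu)$ and cancelling with $(cs)\dg=1$, and part (iii) by computing $csj=pqp\sp{\perp}+p\sp{\perp}qp=csk$ and cancelling again. The only loose point is your justification of $(csj)\dg=1$ via ``invertibility'' of $j$; the paper instead uses $csj=jcs$ together with Lemma \ref{lm:carrierofprod}, which your commutation facts $cCj$, $sCj$ and $j\dg=1$ already supply.
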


\begin{proof}
(i) By Theorem \ref{th:carofc&s} (ii), $c\dg=(p\vee q\sp{\perp})
\wedge(p\sp{\perp}\vee q)=1\wedge 1=1$ and $s\dg=(p\vee q)\wedge
(p\sp{\perp}\vee q\sp{\perp})=1\wedge 1=1$. Also, $cs=sc\in A$, so
by \cite[Theorem 5.5]{ComSA}, $(cs)\dg=c\dg s\dg=1$. Moreover, $j\dg
=(j\sp{2})\dg=1\dg=1$, and $csj=jcs$, so by \cite[Theorem 5.5]{ComSA}
again, $(csj)\dg=(cs)\dg j\dg=1$.

(ii) By Theorem \ref{th:j,ellProps} (iv), $1=\ell\sp{2}=(cu+sv)
\sp{2}=(cu)\sp{2}+cs(uv+vu)+(sv)\sp{2}=c\sp{2}+s\sp{2}+cs(uv+vu)=
1+cs(uv+vu)$, whence $cs(uv+vu)=0$, and it follows from (i) that
$0=(cs)\dg(uv+vu)=1(uv+vu)=uv+vu$.

(iii) By Lemma 4.8 and Theorem \ref{th:sceffects} (viii), $c$ and $s$
commute with $u$, $v$, $p$ and each other, whence by Definition \ref
{df:symsuvk}, and direct calculation
\[
csj=cs(uvp+pvu)=csuvp+cspvu=(cu)(sv)p+p(sv)(cu)
\]
\[
=(p+q-1)(p-q)p+p(p-q)(p+q-1)=pq+qp-2pqp=pqp\sp{\perp}+p\sp{\perp}qp.
\]
Also, by Theorem \ref{th:genCSdecomp}, we have $csk=pqp
\sp{\perp}+p\sp{\perp}qp$, and it follows that $cs(k-j)=0$. Thus, by
(i), $k-j=(cs)\dg(k-j)=0$, proving (iii).
\end{proof}

Combining Theorem \ref{th:genCSdecomp}, Theorem \ref{th:j,ellProps},
and Lemma \ref{lm:uv+vu}, we obtain the following synaptic-algebra
version of Halmos' CS-decomposition theorem.

\begin{theorem} [{CS-Decomposition}] \label{th:HalmosCStheorem}
If $p$ and $q$ are projections in generic position in $A$, then
\[
q=c\sp{2}p+csj+s\sp{2}p\sp{\perp},
\]
where $pqp=c\sp{2}p=pc\sp{2}$, $p\sp{\perp}qp\sp{\perp}=s\sp{2}p
\sp{\perp}=p\sp{\perp}s\sp{2}$, $pqp\sp{\perp}+p\sp{\perp}qp=csj$,
$c\dg=s\dg=1$, $j$ is a symmetry exchanging $p$ and $p\sp{\perp}$,
$cCs$, $cCj$, $sCj$, and $j\in CC(pqp\sp{\perp}+p\sp{\perp}qp)$.
\end{theorem}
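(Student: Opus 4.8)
The plan is to assemble Theorem~\ref{th:HalmosCStheorem} directly from the three results cited just above its statement, using the fact that under Standing Assumption~\ref{as:genericposition} we have $r=[p,q]=1$. First I would invoke Theorem~\ref{th:genCSdecomp}, the general CS-decomposition, to obtain the decomposition $q=c\sp{2}p+csk+s\sp{2}p\sp{\perp}$ together with the identities $pqp=c\sp{2}p=pc\sp{2}$, $p\sp{\perp}qp\sp{\perp}=s\sp{2}p\sp{\perp}=p\sp{\perp}s\sp{2}$, $pqp\sp{\perp}+p\sp{\perp}qp=csk$, and the commutativity relations $cCs$, $cCk$, $sCk$, and $k\in CC(pqp\sp{\perp}+p\sp{\perp}qp)$. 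None of these requires generic position, so they transfer verbatim.

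The next step is to replace $k$ by $j$. By Lemma~\ref{lm:uv+vu}(iii), which was proved precisely under Standing Assumption~\ref{as:genericposition}, we have $j=k$; substituting this into the conclusions of Theorem~\ref{th:genCSdecomp} gives $q=c\sp{2}p+csj+s\sp{2}p\sp{\perp}$ with $pqp\sp{\perp}+p\sp{\perp}qp=csj$, and the commutativity statements $cCj$, $sCj$, $j\in CC(pqp\sp{\perp}+p\sp{\perp}qp)$. Then I would cite Theorem~\ref{th:j,ellProps}(i) to record that $j$ is a symmetry exchanging $p$ and $p\sp{\perp}$. Finally, Lemma~\ref{lm:uv+vu}(i) supplies $c\dg=s\dg=1$, completing the list of asserted properties.

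So the proof is essentially one paragraph of citation-and-substitution: gather the pieces of Theorem~\ref{th:genCSdecomp}, rewrite $k$ as $j$ via Lemma~\ref{lm:uv+vu}(iii), and append the extra facts about $j$ (symmetry exchanging $p$ and $p\sp{\perp}$ from Theorem~\ref{th:j,ellProps}(i)) and about the carriers ($c\dg=s\dg=1$ from Lemma~\ref{lm:uv+vu}(i)). There is no real obstacle here: all the work has already been done in the preceding lemmas, and the only thing to be careful about is making sure that each cited result was established in the generic-position setting (Standing Assumption~\ref{as:genericposition}) so that the substitution $j=k$ is legitimate. I would state the proof in two or three sentences along these lines, with the explicit chain ``Theorem~\ref{th:genCSdecomp} $+$ Lemma~\ref{lm:uv+vu}(iii) $+$ Theorem~\ref{th:j,ellProps}(i) $+$ Lemma~\ref{lm:uv+vu}(i)'' doing all the bookkeeping.
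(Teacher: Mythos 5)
Your proposal is correct and coincides with the paper's own argument, which simply combines Theorem~\ref{th:genCSdecomp}, Theorem~\ref{th:j,ellProps}, and Lemma~\ref{lm:uv+vu} (using $j=k$ from part (iii) and $c\dg=s\dg=1$ from part (i), both established under the generic-position assumption). There is nothing to add: your citation-and-substitution chain is exactly how the theorem is obtained in the paper.
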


\noindent In Section \ref{sc:OpMat} we show that Halmos' CS-decomposition
theorem can be derived from Theorem \ref{th:HalmosCStheorem}.

\section{Applications of the CS-decomposition} \label{sc:Apps}

In this section we illustrate the utility of Theorem \ref
{th:HalmosCStheorem} by establishing some results analogous to those
in \cite{Guide} and \cite{Halmos}. Thus, in what follows, \emph{we
assume that $p$ and $q$ are projections in generic position and
that the CS-decomposition of $q$ with respect to $p$ is}
\[
q=c^2p+csj+s^2p^{\perp}.
\]

In the following theorem we use Theorem \ref{th:HalmosCStheorem} to
calculate the spectrum of the sum $p+q$. We denote by $\sigma(a)$ the
spectrum of an element $a\in A$ and, as is customary, we identify
each real number $\lambda\in\reals$ with the element $\lambda 1\in A$.
See \cite[\S 8]{FSynap} for an account of spectral theory in a synaptic
algebra.

\begin{theorem} [{Cf. \cite[Example 2.1]{Guide}}] \label{th:specp+q}
The spectrum of $p+q$ is $\sigma(p+q)=\{1\pm\gamma:\gamma\in\sigma(c)\}$.
\end{theorem}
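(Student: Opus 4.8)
The plan is to use the CS-decomposition $q = c^2 p + csj + s^2 p^{\perp}$ to write $p+q$ explicitly in terms of the commuting effect $c$ and the symmetries $p$-related pieces, and then read off its spectrum via the functional calculus. First I would observe that, since $1 = p + p^{\perp}$ and $c^2 + s^2 = 1$ (Theorem \ref{th:sceffects}(v)), we can rewrite
\[
p + q = (1+c^2)p + csj + s^2 p^{\perp} = p + c^2 p + csj + s^2 p^{\perp}.
\]
It is cleaner to center things: using $\ell = 2p - 1 = p - p^{\perp}$ (Theorem \ref{th:j,ellProps}(iv)), one has $p = \tfrac12(1+\ell)$ and $p^{\perp} = \tfrac12(1-\ell)$. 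Substituting and collecting terms, $p+q$ should come out as $1 + \tfrac12(c^2 - s^2)\ell + \tfrac12 c^2 \cdot 1 + \cdots$; more to the point, I expect $p + q - 1$ to equal a self-adjoint element built from the mutually commuting family $\{c\}$ together with the symmetries $\ell = cu+sv$ and $j$, which anticommute in a controlled way. The key computational step is to show that $p + q - 1 = c\cdot w$ for a symmetry $w$, or more directly that $(p+q-1)^2 = c^2$.

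That last identity is in fact already essentially available: Theorem \ref{th:sceffects}(i) gives $c^2 = (p+q-1)^2$. So $p+q-1$ is a self-adjoint element whose square is $c^2$, and by the polar decomposition (Definition \ref{df:symsuvk}(1)) we have $p - q^{\perp} = p+q-1 = cu = uc$ with $u$ a symmetry commuting with $c$. Since $cCu$ and both generate a commutative synaptic subalgebra, the spectral mapping theorem for the continuous function $t \mapsto $ (the relevant combination) applies: $\sigma(p+q) = \sigma(1 + cu) = \{1 + \mu : \mu \in \sigma(cu)\}$. The main step is then to compute $\sigma(cu)$. Because $u^2 = 1$, $u$ has spectrum contained in $\{-1, 1\}$; because $cCu$, the element $cu$ is self-adjoint and, working in the commutative algebra generated by $c$ and $u$, one gets $\sigma(cu) \subseteq \{\pm \gamma : \gamma \in \sigma(c)\}$. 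For the reverse inclusion one uses that $c\dg = 1$ in generic position (Lemma \ref{lm:uv+vu}(i)), so $c$ is injective in the relevant sense and $u$ genuinely takes both values $\pm 1$ "over" the support of $c$; more carefully, since $c = |p - q^{\perp}|$ and $u$ is its signum with $u \in CC(p-q^{\perp})$, the pair $(c,u)$ behaves like $(|f|, \sgn f)$ for a function $f$ with $\{f = 0\}$ negligible, giving $\sigma(cu) = \sigma(p+q-1) = \{\pm\gamma : \gamma \in \sigma(c)\}$. Indeed it is simplest to skip $cu$ entirely and directly apply spectral theory to $p+q-1$: its spectrum is symmetric about $0$ because $j$ (or the symmetry exchanging $p$ and $p^{\perp}$) conjugates $p+q-1$ to $-(p+q-1)$ — this is where generic position and the symmetry $j$ from Theorem \ref{th:HalmosCStheorem} enter — and $|p+q-1| = c$, so $\sigma(p+q-1) = \{\pm \gamma : \gamma \in \sigma(c)\}$.

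So the clean route is: (1) note $p+q-1$ is self-adjoint with $|p+q-1| = c$ by Theorem \ref{th:sceffects}(i),(iv); (2) show $\sigma(p+q-1)$ is symmetric under $\gamma \mapsto -\gamma$ by exhibiting a symmetry $w$ with $w(p+q-1)w = -(p+q-1)$ — here $w = j$ works since $jpj = p^{\perp}$ and (one checks) $jqj = q^{\perp}$ in generic position, giving $j(p+q-1)j = p^{\perp}+q^{\perp}-1 = -(p+q-1)$; (3) conclude from symmetry of the spectrum together with $|p+q-1| = c$ that $\sigma(p+q-1) = \{\pm\gamma : \gamma \in \sigma(c)\}$, using that for a self-adjoint $a$ in a synaptic algebra $\sigma(|a|) = \{|\lambda| : \lambda \in \sigma(a)\}$; (4) translate by $1$: $\sigma(p+q) = 1 + \sigma(p+q-1) = \{1 \pm \gamma : \gamma \in \sigma(c)\}$.

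The main obstacle I anticipate is step (3): one must be careful that symmetry of $\sigma(a)$ plus knowledge of $\sigma(|a|)$ really pins down $\sigma(a)$, which requires knowing $0 \notin \sigma(a)$ is \emph{not} assumed — but in fact $0$ may well be in $\sigma(c)$, and then $0 \in \sigma(p+q-1)$ too, so the formula $\{\pm\gamma\}$ still holds with the understanding that $+0 = -0$. Verifying $jqj = q^{\perp}$ also needs a short argument: from $q = c^2 p + csj + s^2 p^{\perp}$, using $jpj = p^{\perp}$, $jp^{\perp}j = p$, $jcj = c$, $jsj = s$, and $j^2 = 1$, one computes $jqj = c^2 p^{\perp} + csj + s^2 p = 1 - q$ after invoking Corollary \ref{co:genCSdecomp}. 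That is the one place where the explicit CS-decomposition really gets used, and it is routine once the commutation relations from Lemma \ref{lm:uvkCsc} and Theorem \ref{th:j,ellProps} are in hand.
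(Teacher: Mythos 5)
Your overall strategy is the same as the paper's: reduce to the identity $(p+q-1)\sp{2}=c\sp{2}$ from Theorem \ref{th:sceffects} (i), so that every point of $\sigma(p+q)$ has the form $1\pm\gamma$ with $\gamma\in\sigma(c)$ and every $\gamma\in\sigma(c)$ contributes at least one of $1+\gamma,\,1-\gamma$, and then close the gap by showing that $\sigma(p+q-1)$ is invariant under $\gamma\mapsto-\gamma$ via conjugation by a symmetry. However, your implementation of the symmetry step contains a genuine error: the identity $jqj=q\sp{\perp}$ is false. Since $j$ commutes with $c$ and $s$ (Theorem \ref{th:j,ellProps} (ii)) and $jpj=p\sp{\perp}$, conjugating the CS-decomposition gives $jqj=s\sp{2}p+csj+c\sp{2}p\sp{\perp}$, whereas Corollary \ref{co:genCSdecomp} together with $k=j$ (Lemma \ref{lm:uv+vu} (iii)) gives $q\sp{\perp}=s\sp{2}p-csj+c\sp{2}p\sp{\perp}$; the two differ by $2csj\neq 0$ (in generic position $p$ cannot commute with $q$, so $cs\neq0$). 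You can also see this in the rank-2 model of Example \ref{ex:matrices2}, where conjugation by $j$ flips the sign of the diagonal angle dependence but not of the off-diagonal entries. Equivalently, writing $p+q-1=c\sp{2}\ell+csj$ with $\ell=2p-1$, conjugation by $j$ yields $-c\sp{2}\ell+csj$, not $-(p+q-1)$. Your earlier heuristic that ``$u$ takes both values $\pm1$ over the support of $c$'' because $c\dg=1$ is not a substitute: carrier $1$ alone does not force a symmetric spectrum, which is exactly the point that needs proof.

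The repair is short, and it is what the paper does: after conjugating by $j$, conjugate once more by the symmetry $\ell=2p-1$, which commutes with $c$ and $s$ and satisfies $\ell j\ell=-j$; the composite sends $c\sp{2}\ell+csj-\gamma$ to $-c\sp{2}\ell-csj-\gamma=-(p+q-1)-\gamma$, and since each conjugation preserves invertibility, $\gamma\in\sigma(p+q-1)$ iff $-\gamma\in\sigma(p+q-1)$. Alternatively, a single symmetry does work, just not $j$: by the unnumbered theorem in Section \ref{sc:dropdown}, the symmetry $v$ of Definition \ref{df:symsuvk} exchanges $p$ with $q\sp{\perp}$ and $q$ with $p\sp{\perp}$, so $v(p+q-1)v=q\sp{\perp}+p\sp{\perp}-1=-(p+q-1)$ directly. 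One further small caveat: your step (3) invokes $\sigma(|a|)=\{|\lambda|:\lambda\in\sigma(a)\}$, which you would need to justify in the synaptic-algebra setting; the paper avoids this by using only the spectral mapping for the polynomial $\lambda\mapsto(\lambda-1)\sp{2}$.
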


\begin{proof}
Put $a:=p+q$. Then by Theorem \ref{th:sceffects} (i), $(a-1)^2=
(p+q-1)^2=c^2$, whereupon
\[
\{(\lambda-1)\sp{2}:\lambda\in\sigma(a)\}=\sigma((a-1)\sp{2})
 =\sigma(c\sp{2})=\{\gamma\sp{2}:\gamma\in\sigma(c)\}.
\]
Therefore, for all $\lambda\in\sigma(p+q)=\sigma(a)$, there exists
$\gamma\in\sigma(c)$ such that $\lambda=1+\gamma$ or $\lambda=1-
\gamma$. Moreover, for any $\gamma\in\sigma(c)$ there exists
$\lambda\in\sigma(p+q)$ such that one of the latter two equations
holds.

Let $\gamma\in\sigma(c)$. To complete the proof, it will suffice to
show that $1+\gamma\in\sigma(p+q)$ iff $1-\gamma\in\sigma(p+q)$,
i.e., that $\gamma\in\sigma(p+q-1)$ iff $-\gamma\in\sigma(p+q-1)$.
By the CS-decomposition of $q$ with respect to $p$, we have
\begin{eqnarray*}
p+q-1&=& p+c^2p+s^2p^{\perp}+csj-p-p^{\perp}=c^2p+(s^2-1)p^{\perp}+csj\\
&=& c^2p-c^2p^{\perp}+csj=c^2(p-p^{\perp})+csj=c\sp{2}\ell+csj,
\end{eqnarray*}
where $\ell:=p-p^{\perp}=2p-1$ is a symmetry commuting with $p$, $c$,
and $s$ (Theorem \ref{th:j,ellProps} (iv)); moreover, from $jpj=
p^{\perp}$ we get
\[
j\ell j=-\ell,\ \ell j=-j\ell,\text{\ and\ }\ell j\ell=-j.
\]
Thus, the element $(p+q-1)-\gamma=c^2\ell+csj-\gamma$ is invertible
iff $j(c^2\ell+csj-\gamma)j=-c\sp{2}\ell+csj-\gamma$ is invertible
iff $\ell(-c^2\ell+csj-\gamma)\ell=-c\sp{2}\ell-csj-\gamma=-(p+q-1)
-\gamma$ is invertible. Hence, $\gamma\in\sigma(p+q-1)$ iff
$-\gamma\in\sigma(p+q-1)$.
\end{proof}

We now turn our attention to some commutativity results that
involve the CS-decomposition.

\begin{lemma}\label{lm:commutant}
Suppose that there exists $b=bp=pb
\in C(c)$ and that $a=b+jbj$. Then $ap=pa=b$ and $a\in C(q)$.
\end{lemma}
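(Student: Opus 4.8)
\textbf{Proof plan for Lemma \ref{lm:commutant}.}
The plan is to verify the two displayed claims in turn, using the symmetry $j$ (which satisfies $j^2=1$, $jpj=p^{\perp}$, and $jCc$, $jCs$ by Theorem \ref{th:j,ellProps}) as the main tool, together with the CS-decomposition $q=c^2p+csj+s^2p^{\perp}$. First I would compute $ap$ and $pa$. Since $b=bp=pb$ and $jpj=p^{\perp}$, we get $(jbj)p = jb(jp) = jb(pj)\cdot$\,---\,more carefully, $jp = jp j^2 = (jpj)j = p^{\perp}j$, so $(jbj)p = jbp^{\perp}j$; but $bp^{\perp} = b(1-p) = b - bp = 0$, hence $(jbj)p = 0$. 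Likewise $p(jbj) = jp^{\perp}bj = 0$. Therefore $ap = bp + (jbj)p = b$ and $pa = pb + p(jbj) = b$, giving the first claim $ap = pa = b$.

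Next I would show $a \in C(q)$, i.e. $aq = qa$, by expanding $q$ via the CS-decomposition and checking term by term. Note first that since $b \in C(c)$ and $C(c) = C(s) = C(c^2) = C(s^2)$ (Theorem \ref{th:sceffects} (vii)), $b$ commutes with $c$, $s$, $c^2$, $s^2$; and $jbj$ commutes with $c$ and $s$ as well, since $j$ commutes with $c,s$ and $b$ does too (conjugation by $j$ preserves the commutant of $c$ and of $s$). Also $b$ commutes with $p$, and $jbj$ commutes with $p^{\perp}$ by the same conjugation argument (from $bp=pb$). Now write $aq = a(c^2p + csj + s^2p^{\perp})$. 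Using $ap = pa = b$ and $ap^{\perp} = a - ap = a - b = jbj = p^{\perp}a$, and the fact that $a$ commutes with $c^2$, $s^2$, $cs$ (each being in $C(c)=C(s)$, and $a = b + jbj$ with both summands commuting with these), one computes
\[
aq = c^2 b + cs\,(aj) + s^2\, jbj,
\]
and similarly
\[
qa = c^2 b + cs\,(ja) + s^2\, jbj,
\]
where in $qa$ I have used $q a = (c^2 p + csj + s^2 p^{\perp})a$ with $pa = b$, $p^{\perp}a = jbj$. So it remains only to check $aj = ja$. But $ja = j(b + jbj) = jb + j^2bj = jb + bj$, and $aj = (b + jbj)j = bj + jbj^2 = bj + jb$, so indeed $aj = ja$. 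Hence $aq = qa$, i.e. $a \in C(q)$.

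The main obstacle I anticipate is purely bookkeeping: keeping straight which elements commute with which (the interplay of $p$, $p^{\perp}$, $c$, $s$, $j$, $b$, $jbj$) and correctly using $C(c) = C(s)$ together with stability of commutants under conjugation by the symmetry $j$. There is no deep step; once one records that $b$ commutes with $p, c, s$ and that conjugation by $j$ sends $p \mapsto p^{\perp}$ while fixing $c$ and $s$ up to commutativity, both assertions fall out by direct expansion. The one place to be slightly careful is that products like $csj$ genuinely lie in $A$ and that $aj, ja$ make sense as elements of $A$ — but $j$ is a symmetry, so $aj + ja \in A$ automatically, and the equality $aj = ja$ shown above secures $aj \in A$ as well.
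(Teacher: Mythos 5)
Your proposal is correct and follows essentially the same route as the paper's proof: first showing $(jbj)p=p(jbj)=0$ from $bp^{\perp}=0$ and $jpj=p^{\perp}$ to get $ap=pa=b$, then expanding $aq$ and $qa$ via the CS-decomposition using $b\in C(c)=C(s)$ and the commutation of $j$ with $c$ and $s$. Your extra observation that $aj=ja$ is just a mild streamlining of the same term-by-term comparison the paper carries out.
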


\begin{proof} Assume the hypotheses of the lemma. Then
\setcounter{equation}{0}
\begin{equation} \label{eq:CommuteLemma1}
bp\sp{\perp}=p\sp{\perp}b=0, \text{\ so\ } jbjp=jbp\sp{\perp}j
 =0=jp\sp{\perp}bj=pjbj,
\end{equation}
whence
\begin{equation} \label{eq:CommuteLemma2}
jbjp\sp{\perp}=p\sp{\perp}jbj=jbj,\text{\ and\ }ap=bp+jbjp=b
=pb+pjbj=pa.
\end{equation}
Since $b\in C(c)$, we have $b\in C(s)$ by Theorem \ref
{th:sceffects} (viii). Using the data in (\ref{eq:CommuteLemma1})
and (\ref{eq:CommuteLemma2}), we find that
\[
aq=b(pc\sp{2}+csj+p\sp{\perp}s\sp{2})+jbj(pc\sp{2}+jcs+p\sp{\perp}
 s\sp{2})=c^2b+csbj+jbcs+jbjs^2,
\]
whereas
\[
qa=(c\sp{2}p+jcs+s\sp{2}p\sp{\perp})b+(c\sp{2}p+csj+s\sp{2}p\sp{\perp})
 jbj=c\sp{2}b+jbcs+csbj+s\sp{2}jbj.
\]
Since $s\sp{2}Cjbj$, it follows that $aq=qa$.
\end{proof}

\begin{theorem}\label{th:commutant}
Let $z\in P$ be a projection. Then $z\in C(p)\cap C(q)$ iff
there exists a projection $t\in P$ such that $t=tp=pt\in C(c)$ and
$z=t+jtj$.
\end{theorem}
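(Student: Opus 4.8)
The plan is to prove both implications of the biconditional in Theorem \ref{th:commutant}, drawing on the CS-decomposition $q=c\sp{2}p+csj+s\sp{2}p\sp{\perp}$ and the commutation facts established in Sections \ref{sc:sinecosine} and \ref{sc:dropdown}.

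For the reverse direction, suppose such a projection $t$ exists with $t=tp=pt\in C(c)$ and $z=t+jtj$. First I would apply Lemma \ref{lm:commutant} with $b:=t$; this immediately gives $zp=pz=t$ and $z\in C(q)$. Since $t\leq p$ we have $jtj\leq jpj=p\sp{\perp}$, so $t\perp jtj$, and from $t\in P$, $jtj\in P$ we get $z=t\oplus jtj\in P$; moreover $zp=t\in P$ forces $z\in C(p)$ (indeed $pz=zp$ was already shown). Hence $z\in C(p)\cap C(q)$, which is the desired conclusion. I should double-check that $z$ is genuinely a projection and that $z\in C(p)$ follows cleanly from $pz=zp$; both are routine given $t\le p$ and $jpj=p\sp{\perp}$.

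For the forward direction, assume $z\in P$ with $z\in C(p)\cap C(q)$. The natural candidate is $t:=zp=pz\in P$ (a projection since $z$ and $p$ commute). I would first verify $t\in C(c)$: by Theorem \ref{th:sceffects} (ix), $C(p)\cap C(q)\subseteq C(c)$, so $z\in C(c)$, and $p\in C(c)$ by Theorem \ref{th:sceffects} (viii), hence $t=zp\in C(c)$. Also $t=tp=pt$ is immediate. It then remains to show $z=t+jtj$. The key computation is to expand $zq=qz$ using the CS-decomposition and the Peirce decomposition of $z$ with respect to $p$. Writing $z=pzp+pzp\sp{\perp}+p\sp{\perp}zp+p\sp{\perp}zp\sp{\perp}$ and using $zCp$, the off-diagonal terms vanish, so $z=pzp+p\sp{\perp}zp\sp{\perp}=t+p\sp{\perp}zp\sp{\perp}$. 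I then need to show $p\sp{\perp}zp\sp{\perp}=jtj$. From $z\in C(q)$ and $q=c\sp{2}p+csj+s\sp{2}p\sp{\perp}$, multiplying $zq=qz$ on appropriate sides by $p$ and $p\sp{\perp}$ and using $z\in C(c)=C(s)$, $cCj$, $sCj$, the term $csj$ produces the relation $cs\cdot(jt - (p\sp{\perp}zp\sp{\perp})j)=0$ (or an equivalent identity linking $t$ and $p\sp{\perp}zp\sp{\perp}$ through $j$); since $c\dg=s\dg=1$ in generic position (Lemma \ref{lm:uv+vu} (i)), applying the carrier $(cs)\dg=1$ cancels the $cs$ factor and yields $p\sp{\perp}zp\sp{\perp}=jtj$.

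The main obstacle I anticipate is the bookkeeping in that last step: extracting the precise identity relating $p\sp{\perp}zp\sp{\perp}$ and $jtj$ from the commutation relation $zq=qz$. One has to be careful about which side one multiplies by, and to use $jpj=p\sp{\perp}$ together with $jp\sp{\perp}j=p$ and $j\in CC(pqp\sp{\perp}+p\sp{\perp}qp)$ so that $j$ commutes with $c$ and $s$ but conjugates $p$ and $p\sp{\perp}$. A clean way to organize it is: set $t':=p\sp{\perp}zp\sp{\perp}$, so $z=t+t'$ with $t\leq p$, $t'\leq p\sp{\perp}$; compute $qz=zq$, isolate the components, and show $jt=t'j$, i.e. $t'=jtj$ after right-multiplying by $j$ and using $j\sp{2}=1$. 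Once $z=t+jtj$ is established, the proof is complete. I expect everything else---showing $t$ is a projection, $t\in C(c)$, $tp=pt=t$---to be immediate from the results already proved, so the whole argument should be short.
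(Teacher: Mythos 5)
Your proposal is correct, and its reverse direction is exactly the paper's (Lemma \ref{lm:commutant} with $b:=t$), but your forward direction takes a genuinely different route. The paper never expands $zq=qz$ against the CS-decomposition: it observes that $z\in C(p)\cap C(q)$ makes $z$ commute with $pqp\sp{\perp}+p\sp{\perp}qp$, hence $j\in C(z)$ by the double-commutant property of $j$; it then invokes Corollary \ref{co:pCq}(vi) to write $z=gp+g\sp{\perp}p\sp{\perp}$ with $g:=|p-z\sp{\perp}|$ a projection commuting with $p$ and $c$, and compares Peirce components of $z=jzj$ to get $g\sp{\perp}p\sp{\perp}=jgjp\sp{\perp}$, setting $t:=gp$ (which equals your $zp$). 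Your route instead takes $t:=zp$ at the outset and extracts $p\sp{\perp}zp\sp{\perp}=jtj$ from the compressions of $zq=qz$, using $c\dg=s\dg=1$; this buys a more computational, self-contained argument that leans only on Theorem \ref{th:HalmosCStheorem} and the carrier fact, while the paper's version avoids the compression bookkeeping by recycling Corollary \ref{co:pCq}(vi) and the defining property $j\in CC(pqp\sp{\perp}+p\sp{\perp}qp)$.

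One point needs care in your last step. Writing $t':=p\sp{\perp}zp\sp{\perp}$, the compression $p(zq-qz)p\sp{\perp}=0$ gives $cs\,(tj-jt')=0$; but $tj-jt'$ lives in the enveloping algebra $R$ and is in general \emph{not} an element of $A$, and the carrier property ($ab=0\Leftrightarrow a\dg b=0$) is available only for $b\in A$. So you cannot first ``cancel $cs$'' and only afterwards right-multiply by $j$, as your sketch has it. The fix is to reverse the order: right-multiply by $j$ first to get $cs\,(t-jt'j)=0$ with $t-jt'j\in A$ (since $jt'j=J\sb{j}t'\in A$), then apply $(cs)\dg=1$ to conclude $t=jt'j$, i.e.\ $t'=jtj$. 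With that one-line reordering your argument is complete.
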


\begin{proof}
If $t\in P$, $t=tp=pt\in C(c)$, and $z=t+jtj$, then $z\in C(p)
\cap C(q)$ by Lemma \ref{lm:commutant} with $a:=z$ and $b:=t$.

Conversely, suppose that $z\in P\cap C(p)\cap C(q)$ and let $g:=
|p-z\sp{\perp}|$ be the cosine effect of the projection $z$ with
respect to $p$ (Theorem \ref{th:sceffects} (iv)). Thus, $g\in C(p)$
and since $z$ commutes with $p$, we infer from Corollary \ref{co:pCq}
(vi) that $g$ is a projection and $z=gp+g\sp{\perp}p\sp{\perp}$. By
Theorem \ref{th:sceffects} (ix), $z\in C(c)$. Moreover, as $p,z\in
C(c)$, we have $p-z\sp{\perp}\in C(c)$, whence $g=|p-z^{\perp}|
\in C(c)$. Also, since $j\in CC(pqp\sp{\perp}+p\sp{\perp}qp)$ and
$z\in C(p)\cap C(q)$, it follows that $j\in C(z)$. From this and from
$p\sp{\perp}j=jp$ we find that
\[
gp+g\sp{\perp}p\sp{\perp}=z=jzj\\=jgpj+jg\sp{\perp}p\sp{\perp}j\\=
 jgjp\sp{\perp}+jg\sp{\perp}jp,
\]
and multiplying both sides of the last equation by $p\sp{\perp}$
from the right, we obtain $g\sp{\perp}p\sp{\perp}=jgjp\sp{\perp}$.
Consequently,
\[
z=gp+g\sp{\perp}p\sp{\perp}=gp+jgjp\sp{\perp}=gp+jgpj.
\]
Now  put $t:=gp=pg$.  Then $z=t+jtj$, $tp=pt=t$, and $t\sp{2}=
g\sp{2}p=gp=t$, so $t\in P$. Moreover, since $g,p\in C(c)$, it
follows that $t\in C(c)$.
\end{proof}

In the next theorem, we find  conditions under which an
arbitrary element $a\in A$ commutes with projections $p$ and
$q$ in generic position.  The limits in the proof are
taken with respect to the order-unit norm on $A$ \cite
[p. 634]{FSynap}.

\begin{theorem}\label{commutantgen} An element $a\in A$
commutes with both $p$ and $q$ iff there exists $b\in C(c)$ such that
$b=bp=pb$ and $a=b+jbj$.
\end{theorem}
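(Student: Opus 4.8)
The plan is to upgrade Theorem~\ref{th:commutant} from projections $z$ to arbitrary elements $a\in A$ by a spectral/approximation argument, so that the structural fact ``$a$ commutes with $p$ and $q$ iff $a=b+jbj$ for some $b=bp=pb\in C(c)$'' follows from the projection case applied to the spectral projections of $a$. The ``if'' direction is immediate: it is exactly Lemma~\ref{lm:commutant} (with the roles of $a$ and $b$ as stated there), so no work is needed there beyond citing it. All the effort goes into the ``only if'' direction.

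First I would assume $a\in A$ commutes with both $p$ and $q$. I would like to say that every spectral projection $z$ of $a$ also commutes with $p$ and $q$: since $p,q\in CC$-type relations are not automatic, the clean route is to note that $p\in C(a)$ and $q\in C(a)$, and that each spectral projection of $a$ lies in $CC(a)$, hence commutes with everything in $C(a)$, in particular with $p$ and $q$. Then Theorem~\ref{th:commutant} applies to each such $z$: there is a projection $t_z=t_zp=pt_z\in C(c)$ with $z=t_z+jt_zj$. The map $z\mapsto t_z=zp=pz$ (reading off the proof of Theorem~\ref{th:commutant}, where $t=gp=pg$ and $g$ is the cosine effect of $z$, so in fact $t_z=zp$ once $z\in C(p)$) is additive and multiplicative on orthogonal spectral projections, so assembling the spectral resolution $a=\int\lambda\,dz_\lambda$ I would set $b:=ap=pa$. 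This $b$ satisfies $b=bp=pb$ by construction, and I must check $b\in C(c)$: since $a\in C(p)\cap C(q)$ we have $a\in C(c)$ by Theorem~\ref{th:sceffects}~(ix), hence $b=ap$ is a product of two elements of $C(c)$ that commute, so $b\in C(c)$.

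It then remains to show $a=b+jbj$. Here I would argue that $jbj$ is the ``off-diagonal-reflected'' piece: from $b=pa$ and $jpj=p^{\perp}$ one gets $jbj=jpaj=p^{\perp}(jaj)$; and since $a\in C(p)\cap C(q)$ forces $a\in C(j)$ (because $j\in CC(pqp^{\perp}+p^{\perp}qp)$ and $p,q\in C(a)$ give $a\in C(j)$, exactly as in the proof of Theorem~\ref{th:commutant}), we have $jaj=a$, so $jbj=p^{\perp}a=ap^{\perp}$. Therefore $b+jbj=ap+ap^{\perp}=a$, as desired. The limits alluded to in the theorem statement enter because $b=pa$ must be realized as a norm-limit of finite linear combinations $\sum\lambda_i t_{z_i}=p\sum\lambda_i z_i$ of the projection-level data, and one uses continuity of multiplication and of the commutant relations with respect to the order-unit norm to pass $b\in C(c)$ and $a=b+jbj$ to the limit; I expect this approximation bookkeeping, rather than any single algebraic identity, to be the main obstacle, since one must justify that the spectral resolution of $a$ converges in order-unit norm and that $C(c)$ is norm-closed.

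Finally I would double-check the edge where $a$ is not positive: the absolute value / signum apparatus lets one write $a=a^{+}-a^{-}$ with $a^{\pm}\in C(a)\subseteq C(p)\cap C(q)\cap C(c)$ when $a\in C(p)\cap C(q)$, reducing to the positive case, and then reassemble $b=b^{+}-b^{-}$; alternatively one works directly with the full spectral resolution over a bounded real interval, which already handles signs. Either way the conclusion $a=b+jbj$ with $b=bp=pb\in C(c)$ is obtained, completing the characterization.
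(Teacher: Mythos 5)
Your argument is correct, but its operative core is genuinely different from (and simpler than) the paper's proof, and the spectral scaffolding you wrap around it is actually superfluous. The paper proves the nontrivial direction by reducing to the projection case: it takes the spectral resolution $(z_{\lambda})$ of $a$, applies Theorem \ref{th:commutant} to each $z_{\lambda}$ to get projections $t_{\lambda}=pt_{\lambda}=t_{\lambda}p\in C(c)$ with $z_{\lambda}=t_{\lambda}+jt_{\lambda}j$, approximates $a$ in order-unit norm by finite real combinations $a_{n}=d_{n}+jd_{n}j$ with $d_{n}=a_{n}p$, and then passes to the limit, using the norm-closedness of $C(c)$ and continuity of $J_{j}$, to get $b=ap=\lim d_{n}\in C(c)$ and $a=b+jbj$. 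Your verifications, by contrast, never use the limit: you set $b:=ap=pa$, get $b\in C(c)$ directly from $C(p)\cap C(q)\subseteq C(c)$ (Theorem \ref{th:sceffects} (ix)) and $p\in C(c)$, and get $a=b+jbj$ directly from the observation that $a$ commutes with $pqp\sp{\perp}+p\sp{\perp}qp$ (since $aCp$, $aCq$), hence $aCj$ because $j=k\in CC(pqp\sp{\perp}+p\sp{\perp}qp)$ (Definition \ref{df:symsuvk} (3), Lemma \ref{lm:uv+vu} (iii)), so that $jbj=j(ap)j=a(jpj)=ap\sp{\perp}$ and $b+jbj=ap+ap\sp{\perp}=a$. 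This is a complete, purely algebraic proof of the ``only if'' direction that needs neither the spectral resolution, nor Theorem \ref{th:commutant}, nor the closedness of $C(c)$ — so your worry that the ``approximation bookkeeping'' is the main obstacle, and your final paragraph about positive and negative parts, address difficulties your own argument has already eliminated. What the paper's route buys is the explicit reduction of the general commutant problem to the projection-level statement (mirroring Halmos and giving the projection $t$ with $z=t+jtj$ as a result of independent interest); what your route buys is brevity and the insight that for general $a$ the whole statement is an immediate consequence of $C(p)\cap C(q)\subseteq C(c)$ together with $j\in CC(pqp\sp{\perp}+p\sp{\perp}qp)$. The ``if'' direction you handle exactly as the paper does, by Lemma \ref{lm:commutant}.
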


\begin{proof} If $b\in C(c)$, $b=bp=pb$, and $a=b+jbj$, then
$a\in C(p)\cap C(q)$ by Lemma \ref{lm:commutant}.

Conversely, assume that $a\in C(p)\cap C(q)$ and let $(z\sb{\lambda})
\sb{\lambda\in\reals}$ be the spectral resolution of $a$ (\cite
[Definition 8.2 (ii)]{FSynap}). By \cite[Theorem 8.10]{FSynap},
$z\sb{\lambda}\in P\cap C(p)\cap C(q)$ for all $\lambda\in\reals$,
whence by Theorem \ref{th:commutant}, for each $\lambda\in\reals$,
there exists a projection $t\sb{\lambda}\in P$ such that $t\sb{\lambda}
=t\sb{\lambda}p=pt\sb{\lambda}\in C(c)$ and $z\sb{\lambda}=
t\sb{\lambda}+jt\sb{\lambda}j$.

By \cite[Corollary 8.6]{FSynap}, there is an ascending sequence
$a\sb{1}\leq a\sb{2}\leq\cdots$ in $CC(a)$ such that $a=\lim\sb
{n\rightarrow\infty}a\sb{n}$ and each $a\sb{n}$ is a finite real
linear combination of projections $z\sb{\lambda}$ in the spectral
resolution of $a$. Let $n$ be a positive integer. Then, since
$a\sb{n}\in CC(a)$ and $a\in C(p)\cap C(q)$, it follows that
$a\sb{n}\in C(p)\cap C(q)$. Moreover,
\[
a\sb{n}=\sum\sb{i=1}\sp{M\sb{n}}\alpha\sb{n,i}z\sb{\lambda\sb{n,i}}
 =\sum\sb{i=1}\sp{M\sb{n}}\alpha\sb{n,i}(t\sb{\lambda\sb{n,i}}+
 jt\sb{\lambda\sb{n,i}}j)=d\sb{n}+jd\sb{n}j,
\]
where $\alpha\sb{n,i}\in\reals$ and $d\sb{n}:=\sum\sb{i=1}\sp{M\sb{n}}
\alpha\sb{n,i}t\sb{\lambda\sb{n,i}}$. Since $t\sb{\lambda\sb{n,i}}=
t\sb{\lambda\sb{n,i}}p=pt\sb{\lambda\sb{n,i}}\in C(c)$, we have
\[
d\sb{n}=d\sb{n}p=pd\sb{n}\in C(c),\text{\ and\ }jd\sb{n}j=jd\sb{n}pj=
 jd\sb{n}jp\sp{\perp}.
\]
Thus, $a\sb{n}p=pa\sb{n}=d\sb{n}p+jd\sb{n}jp=d\sb{n}\in C(c)$. Put $b:
=ap=pa$, noting that $b=bp=pb$. Also, since $a,p\in C(c)$, we have
$b\in C(c)$. Moreover,
\[
b=ap=(\lim\sb{n\rightarrow\infty}a\sb{n})p=\lim\sb{n\rightarrow\infty}
 (a\sb{n}p)=\lim\sb{n\rightarrow\infty}d\sb{n}.
\]
By \cite[Theorem 8.11]{FSynap}, $C(c)$ is closed in the order-unit-norm
topology, whence $b=\lim\sb{n\rightarrow\infty}d\sb{n}\in C(c)$. Moreover,
since $j$ is a symmetry, we have
\[
jbj=j(\lim\sb{n\rightarrow\infty}d\sb{n})j=\lim\sb{n\rightarrow\infty}
 (jd\sb{n}j),
\]
and it follows that
\[
a=\lim\sb{n\rightarrow\infty}a\sb{n}=\lim\sb{n\rightarrow\infty}
 (d\sb{n}+jd\sb{n}j)=b+jbj. \qedhere
\]
\end{proof}

\section{Operator-matrix consequences} \label{sc:OpMat}

Let ${\mathcal H}$ be a nonzero complex separable Hilbert space,
let ${\mathcal B}({\mathcal H})$ be the C$\sp{\ast}$-algebra of
all bounded linear operators on ${\mathcal H}$, and let $A$ be the
synaptic algebra of all self-adjoint operators in ${\mathcal B}
({\mathcal H})$. The assumption that $p,q\in A$ are projections in
generic position is still in force. Following the notation in
\cite[pp. 1413 and ff.]{Guide}, we define the following closed
linear subspaces of ${\mathcal H}$:
\[
M\sb{0}:=p({\mathcal H})\text{\ and\ }M\sb{1}:=p\sp{\perp}
 ({\mathcal H}).
\]
Then ${\mathcal H}=M\sb{0}\oplus M\sb{1}$; hence, in what follows,
\emph{we shall regard each vector $h\in{\mathcal H}$ as having the
form}
\[
h=\left[\begin{array}{c}x\\w\end{array}\right],\text{\ where\ }x
 \in M\sb{0},\ w\in M\sb{1},\ ph=\left[\begin{array}{c}x\\0
 \end{array}\right]\text{\ and\ }\ p\sp{\perp}h=\left[\begin{array}
 {c}0\\w\end{array}\right].
\]
By Theorem \ref{th:j,ellProps}, $p$ and $p\sp{\perp}$ are exchanged by a
symmetry $j$ in $A$. Thus, for each $w\in M\sb{1}$,
\[
j\left[\begin{array}{c}0\\w\end{array}\right]=jp\sp{\perp}\left
[\begin{array}{c}0\\w\end{array}\right]=pj\left[\begin{array}{c}0\\w
\end{array}\right],
\]
whence there is a uniquely determined element $Rw\in M\sb{0}$ such
that
\[
j\left[\begin{array}{c}0\\w\end{array}\right]=\left[\begin{array}
{c}Rw\\0\end{array}\right].
\]
It is not difficult to show that $R\sp{\ast}=R\sp{-1}$, i.e.,
$R\colon M\sb{1}\to M\sb{0}$ is a unitary isomorphism, and
$R\sp{\ast}\colon M\sb{0}\to M\sb{1}$ satisfies
\[
j\left[\begin{array}{c}x\\0\end{array}\right]=\left[\begin{array}
 {c}0\\R\sp{\ast}x\end{array}\right],\text{\ whence\ }j\left
 [\begin{array}{c}x\\w\end{array}\right]=\left[\begin{array}
 {c}Rw\\R\sp{*}x\end{array}\right]=\left [\begin{array}{cc} 0 & R
 \\ R\sp{\ast} & 0\end{array}\right]\left[\begin{array}{c}x\\w
 \end{array}\right].
\]
Thus in operator-matrix form,
\[
j=\left [\begin{array}{cc} 0 & R\\ R\sp{\ast} & 0\end{array}
 \right]\text{\ and\ \ }p=\left [\begin{array}{cc} I & 0\\ 0 & 0
 \end{array}\right],
\]
where $I\colon M\sb{0}\to M\sb{0}$ is the identity operator.

Let
\[
{\mathcal K}:=\left\{ \left[\begin{array}{c}x\\y\end{array}\right]
 :x,y\in M\sb{0}\right\}
\]
be organized into a Hilbert space in the obvious way.  It is easy
to verify that the mappings given in operator-matrix form by
\[
\left[\begin{array}{cc} I & 0\\ 0 & R\end{array}\right]
 \left[\begin{array}{c}x\\w\end{array}\right]=\left[\begin{array}{c}x\\
 Rw\end{array}\right]\text{\ and\ }\left[\begin{array}{cc} I & 0\\ 0 &
 R\sp{\ast}\end{array}\right]\left[\begin{array}{c}x\\y\end{array}\right]
 =\left[\begin{array}{c}x\\R\sp{\ast}y\end{array}\right]
\]
for
\[
\left[\begin{array}{c}x\\w\end{array}\right]\in{\mathcal H}\text{\ and\ }
\left[\begin{array}{c}x\\y\end{array}\right]\in{\mathcal K}
\]
are inverse unitary isomorphisms of ${\mathcal H}$ onto ${\mathcal K}$
and of ${\mathcal K}$ onto ${\mathcal H}$.

Since the projections $p$ and $q$ are in generic position, Theorem
\ref{th:HalmosCStheorem} yields the decomposition
\[
q=c\sp{2}p+csj+s\sp{2}p\sp{\perp},
\]
where $c$ and $s$ are the sine and cosine effects of $q$ with respect to
$p$, $pqp=c\sp{2}p=pc\sp{2}$, $p\sp{\perp}qp\sp{\perp}=s\sp{2}p
\sp{\perp}=p\sp{\perp}s\sp{2}$, $cCp$, $sCp$, $cCs$, and as above,
$j$ is a symmetry exchanging $p$ and $p\sp{\perp}$. Moreover, $pqp
\sp{\perp}+p\sp{\perp}qp=csj$, $cCj$, $sCj$, and $j\in CC(pqp\sp
{\perp}+p\sp{\perp}qp)$.

For each $x\in M\sb{0}$,
\[
c\left[\begin{array}{c}x\\0\end{array}\right]=cp\left[\begin{array}
 {c}x\\0\end{array}\right]=pc\left[\begin{array}{c}x\\0\end{array}
 \right]\text{\ and\ }s\left[\begin{array}{c}x\\0\end{array}\right]
 =sp\left[\begin{array}{c}x\\0\end{array}\right]=ps\left[\begin{array}
 {c}x\\0\end{array}
 \right]
\]
whence there are uniquely determined elements $Cx\in M\sb{0}$ and
$Sx\in M\sb{0}$ such that
\[
c\left[\begin{array}{c}x\\0\end{array}\right]=\left[\begin{array}
{c}Cx\\0\end{array}\right]\text{\ and\ }s\left[\begin{array}{c}x\\0
 \end{array}\right]=\left[\begin{array}{c}Sx\\0\end{array}\right].
\]
Using the fact that $cCj$, we have, for all $w\in M\sb{1}$,
\[
c\left[\begin{array}{c}0\\w\end{array}\right]=cj\left[\begin{array}{c}
 Rw\\0\end{array}\right]=jc\left[\begin{array}{c}Rw\\0\end{array}\right]
 =j\left[\begin{array}{c}CRw\\0\end{array}\right]=\left[\begin{array}
 {c}0\\R\sp{\ast}CRw\end{array}\right],
\]
whence for $x\in M\sb{0}$ and $w\in M\sb{1}$,
\[
c\left[\begin{array}{c}x\\w\end{array}\right]=\left[\begin{array}{c}
 Cx\\R\sp{\ast}CRw\end{array}\right],\text{\ similarly }s\left
 [\begin{array}{c}x\\w\end{array}\right]=\left[\begin{array}{c}
 Sx\\R\sp{\ast}SRw\end{array}\right],
\]
and it follows that
\[
csj\left[\begin{array}{c}x\\w\end{array}\right]=cs\left[\begin{array}
{c}Rw\\R\sp{\ast}x\end{array}\right]=\left[\begin{array}{c}CSRw\\
R\sp{\ast}CSRR\sp{\ast}x\end{array}\right]=\left[\begin{array}{c}
CSRw\\R\sp{\ast}CSx\end{array}\right].
\]

Using the properties of $c$ and $s$, it is not difficult to show that
$C$ and $S$ are self-adjoint operators on $M\sb{0}$, $0\leq C\leq I$,
$0\leq S\leq I$, $C\sp{2}+S\sp{2}=I$, and that $C$ and $S$ have kernel
zero. We note that, in operator-matrix form,
\[
c=\left[\begin{array}{cc} C & 0\\ 0 & R^*CR\end{array}\right]=\left[
 \begin{array}{cc}I & 0\\ 0 & R\sp{\ast}\end{array}\right]\left[\begin{array}
 {cc} C & 0\\ 0 & C\end{array}\right]\left[\begin{array}{cc} I & 0\\
 0 & R\end{array}\right]\text{\ and}
\]
\[
s=\left[\begin{array}{cc} S & 0\\ 0 & R^*SR\end{array}\right]=\left[
 \begin{array}{cc}I & 0\\ 0 & R\sp{\ast}\end{array}\right]\left[\begin
 {array}{cc}S & 0\\0 & S\end{array}\right]\left[\begin{array}{cc}I & 0\\
  0 & R\end{array}\right].
\]
Similarly, it is not difficult to show that the symmetries $u$ and $v$
can be expressed in matrix-operator form as
\[
u=\left[\begin{array}{cc} C & SR\\ R^*S & -R^*CR\end{array}\right]=\left[
 \begin{array}{cc}I & 0\\ 0 & R\sp{\ast}\end{array}\right]\left[\begin{array}
 {cc} C & S\\ S & -C\end{array}\right]\left[\begin{array}{cc} I & 0\\
 0 & R\end{array}\right]\text{\ and}
\]
\[
v=\left[\begin{array}{cc} S & -CR\\ -R^*C & -R^*SR\end{array}\right]=\left[
 \begin{array}{cc}I & 0\\ 0 & R\sp{\ast}\end{array}\right]\left[\begin{array}
 {cc}S & -C\\-C & -S\end{array}\right]\left[\begin{array}{cc} I & 0\\
 0 & R\end{array}\right].
\]

In view of the results above,
\[
q\left[\begin{array}{c}x\\w\end{array}\right]=(c\sp{2}p+csj+s\sp{2}
 p\sp{\perp})\left[\begin{array}{c}x\\w\end{array}\right]=\left
 [\begin{array}{c}C\sp{2}x+CSRw\\R\sp{\ast}CSx+R\sp{\ast}SRw\end{array}
 \right]
\]
\[
=\left[\begin{array}{cc} I & 0\\ 0 & R\sp{\ast}\end{array}\right]\left
 [\begin{array}{cc} C\sp{2} & CS\\ CS & S\sp{2}\end{array}\right]\left
 [\begin{array}{cc} I & 0\\ 0 & R\end{array}\right]\left[\begin{array}
 {c}x\\w\end{array}\right],
\]
whereupon, in operator-matrix form,
\[
q=\left[\begin{array}{cc} I & 0\\ 0 & R\sp{\ast}\end{array}\right]\left
 [\begin{array}{cc} C\sp{2} & CS\\ CS & S\sp{2}\end{array}\right]\left
 [\begin{array}{cc} I & 0\\ 0 & R\end{array}\right].
\]
This is precisely \cite[Theorem 1.1]{Guide}; hence, \emph{Theorem
\ref{th:HalmosCStheorem} is a true generalization of Halmos'
CS-decomposition theorem.}

Now let $a\in A$. Then by Theorem \ref{commutantgen}, $a\in C(p)
\cap C(q)$ iff there exists $b\in C(c)$ such that $a=b+jbj$ and
$b=bp=pb$. Evidently, $b=bp=pb$ iff $b$ has the operator-matrix form
\[
b=\left[\begin{array}{cc} B & 0\\ 0 & 0\end{array}\right],
\]
where $B$ is a self-adjoint operator on $M\sb{0}$, in which case
\[
jbj=\left[\begin{array}{cc}0 & R\\R\sp{\ast} & 0\end{array}\right]
 \left[\begin{array}{cc}B & 0\\ 0 & 0\end{array}\right]\left[\begin
 {array}{cc}0 & R\\ R\sp{\ast}& 0\end{array}\right]=\left[\begin
 {array}{cc}0 & 0\\ 0 & R\sp{\ast}BR\end{array}\right]
\]
and $bCc$ iff $BC=CB$. Moreover, in operator-matrix form, the
condition $a=b+jbj$ is
\[
a=\left[\begin{array}{cc} B & 0\\ 0 & 0\end{array}\right]+\left
 [\begin{array}{cc} 0 & 0\\ 0 & R\sp{\ast}BR\end{array}\right]
 =\left[\begin{array}{cc} 0 & 0\\ 0 & R\sp{\ast}\end{array}\right]
 \left[\begin{array}{cc} B & 0\\ 0 & B\end{array}\right]\left[
 \begin{array}{cc} 0 & 0\\ 0 & R\end{array}\right].
\]
This is precisely Halmos' solution of the problem of finding
the simultaneous commutant of two projections in generic
position \cite[p.385]{Halmos}.

\end{document}